\newtheorem{theorem}{Theorem}[section]
\newtheorem{lemma}[theorem]{Lemma}
\newtheorem{proposition}[theorem]{Proposition}
\newtheorem{corollary}[theorem]{Corollary}
\theoremstyle{definition}
\newtheorem{definition}[theorem]{Definition}
\newtheorem{example}[theorem]{Example}
\newtheorem{remark}[theorem]{Remark}
\numberwithin{equation}{section}
\renewcommand\Im{{\rm Im}\,}
\newcommand\N{\mathbb N}
\newcommand\R{\mathbb R}
\begin{document}

\title[Inverse spectral problems for star graphs of Stieltjes strings]
{Dirichlet-Neumann inverse spectral problem \\ for a star graph of Stieltjes strings}

\author{Vyacheslav Pivovarchik}
\address{South Ukrainian National Pedagogical University,
Staroportofrankovskaya str.~26, Odessa 65020, Ukraine}
\email{v.pivovarchik@paco.net}

\author{Natalia Rozhenko}
\address{University Brunei Darussalam, Faculty of Science, M2.02
Jln. Tungku Link, Gadong BE1410, Brunei}
\email{mainatasha@mail.ru}

\author{Christiane Tretter}
\address{Mathematisches Institut, Universit\"at Bern, Sidlerstr.\ 5, 3012 Bern, Switzerland}
\email{tretter@math.unibe.ch}

\begin{abstract}
We solve two inverse spectral problems for star graphs of Stieltjes strings
with Dirichlet and Neumann boundary conditions, respectively, at a selected vertex called root.
The root is either the central vertex or, in the more challenging problem, a pendant vertex of the star graph.
At all other pendant vertices Dirichlet conditions are imposed; at the central vertex, at which a mass may be placed, continuity
and Kirchhoff conditions are assumed. 
We derive conditions on two sets of real numbers to be the spectra of the above Dirichlet and Neumann problems. 
Our solution for the inverse problems is constructive:
we establish algorithms to recover the mass distribution on the star graph (i.e.\ the point masses and lengths
of subintervals between them) from these two spectra and from the lengths of the separate strings. If the root 
is a pendant vertex, the two spectra uniquely determine the parameters on the main string  (i.e.\ the string incident to the root)
if the length of the main string is known.
The mass distribution on the other edges need not be unique; the reason for this is the non-uniqueness caused by the non-strict 
interlacing of the given data in the case when the root is the central vertex. 
Finally, we relate of our results to tree-patterned matrix inverse~problems.
\end{abstract}


\keywords{
Star graph, inverse problem, Nevanlinna function, S-function, continued fractions, 
transversal vibrations, Dirichlet boundary condition, Neumann boundary condition, point mass, eigenvalue}.
\subjclass{34A55, 39A70, 70F17, 70J30}


\maketitle

\section{Introduction}
    Two spectra of a boundary value problem describing
small transverse vibrations of a string together with its length
uniquely determine the density for a wide class of
strings. This result  stated by M.G.\ Krein was proved by L.\ de
Branges (see \cite[p.\ 252]{DMcK}). Moreover, these authors found
necessary and sufficient  conditions on two sequences of real
numbers  to be the spectra of two boundary value problems
generated by this class of strings; these conditions
include strict interlacing of the two sequences (see~\cite{KK2}). 

In this paper we consider star graphs of so-called Stieltjes strings, i.e.\
massless threads bearing a finite number of point masses. Such strings are widely used as simple
models in physics (see e.g.\ \cite{KMF}, \cite{FKM}, \cite{FM}). The same type of equations arises 
in elasticity theory for systems of masses joined by springs (see e.g.\ \cite{Gd}, \cite{Ma1}) or in the theory of electrical circuits
(see e.g. the Cauer method \cite{Ca} and also \cite{ZINS}). 

For a single Stieltjes string the inverse problem to determine the distribution of the point masses from two spectra and the total length of the string was completely solved in \cite{GK}. In particular, a constructive algorithm based on continued fraction expansions originating in Stieltjes' work \cite{St} (thus the name) was derived to recover the masses and the lengths of the intervals between them. This algorithm was nicely illustrated and even tested experimentally in the paper \cite{CEH12}, entitled
``One can hear the composition of a string: experiments with an inverse eigenvalue problem". The continuous analogue of this result is known
for the case of smooth strings. If the density of the string is twice differentiable, a Liouville transform reduces
the string equation to a Sturm-Liouville equation. The Sturm-Liouville inverse problem to determine the potential from two spectra was completely
solved in \cite{LG}.

The so-called three spectra inverse problem solved in \cite{P1},
\cite{GS1} for the Sturm-Liouville case (see \cite{HM} for generalizations) 
and in \cite{BP1} for Stieltjes strings may be viewed as an inverse problem on a star graph with two edges.
The three spectra required are the one on the whole interval or string and the two on the two 
subintervals or substrings separated by the point where the string is clamped.

A generalization of the Sturm-Liouville inverse spectral problem for
a star graph of 3 edges can be found in \cite{P2} and for $n$ edges in \cite{P3}.  The inverse spectral problem for
Stieltjes string equations on a star graph without mass at the central vertex and with strict interlacing of the given spectra was solved  in \cite{BP2}. 
In all these papers the central vertex was considered as the root, i.e.\ the spectra of boundary value problems with Dirichlet and Neumann type conditions  at
the central vertex were used as the given data to solve the
inverse problem of reconstructing the mass distribution.
The case of a star graph of Stieltjes strings with damping at the central vertex was studied as an example in the more general paper
\cite{PW}.

In the more complicated case when the root is a pendant vertex,  uniqueness of the potential of the Sturm-Liouville equation
on the edge incident to the root was ensured in \cite{BW} and \cite{Y} by means of the Weyl-Titchmarsh function
related to the main edge (or equivalently the spectra of Dirichlet
and Neumann boundary value problems). The more general case of a tree of Stieltjes strings was studied in \cite{P0} 
where the inverse problem was solved under the sufficient condition of strictly interlacing spectra. 

In the present paper we consider two different boundary value problems for a star
graph of Stieltjes strings with continuity and Kirchhoff conditions at the interior vertex. 
The simpler case when the root is the central vertex generalizes the results of \cite{BP2} in three directions: 
we allow for a mass to be placed at the central vertex, the given eigenvalue sequences need not interlace strictly, and the distribution of the Dirichlet sequence onto the separate edges is not prescribed. The main purpose of this generalization  
is to prepare for the more challenging and essentially different case when the root is a pendant vertex, which has not yet been studied before.

In each of our two main results we propose conditions on two sequences of real numbers  necessary and
sufficient to be the spectra of the Dirichlet and the Neumann problem of a star graph of $q$ Stieltjes strings; in the first result the root lies at the central vertex, in the second theorem the root is at a pendant vertex. In both cases we establish a constructive 
method to recover the values of the masses, including the central one, and lengths of
the subintervals between them. This method uses the representation of rational functions with interlacing zeros and poles by (possibly branching) continued fractions.
If the root is a pendant vertex, then 
the spectra of the Dirichlet and the Neumann problems
together with the total length of the main edge uniquely determine the
values of the masses and lengths of the subintervals between them of the main edge. The remaining inverse problem on the subgraph of $q-1$ edges may be viewed as an inverse problem with root at the central vertex to which our first result applies. 

The paper is organized as follows.
In Section 2 and its two subsections we consider the direct and the inverse spectral problem 
for the case when the root is the central vertex of the star graph with $q$ edges. That is, we impose Dirichlet boundary conditions 
at all pendant vertices, while at the central vertex we consider Kirchhoff plus continuity conditions for the Neumann problem and Dirichlet conditions for the Dirichlet problem (in which case the whole problem decouples into $q$ separate Dirichlet problems). 
In contrast to earlier papers, we allow a mass $M$ to be placed at the central vertex, so that the Dirichlet problem may be viewed as the limit $M\to \infty$ of the Neumann problem.

In Subsection~2.1 we investigate the spectra of the corresponding Neumann and Dirichlet problems and their relation to each other, including monotonicity in terms of the central mass $M$. We prove that the two spectra interlace non-strictly and if they have an eigenvalue $\lambda$ in common, then its multiplicity $p_D(\lambda)$
as a Dirichlet eigenvalue and its multiplicity $p_N(\lambda)$ as a Neumann eigenvalue satisfy $p_D(\lambda)=p_N(\lambda)+1$.
In Subsection~2.2 we show that the necessary conditions established in Subsection~2.1 are also sufficient for the solution of the inverse problem:
given two sequences satisfying these conditions and the total lengths $l_1$, $l_2$, \dots, $l_q$ of all strings, we construct a mass distribution so that the corresponding star graph of Stieltjes strings with root at the central vertex has these two sequences as Neumann and Dirichlet eigenvalues. Since we do not assume strict interlacing of the sequences, this solution need not be unique. The recovering procedure, based on the decomposition of Stieltjes functions into continued fractions, is constructive.

In Section 3 and its two subsections we consider the direct and the inverse spectral problem 
for the case when the root is one of the pendant vertices of the star graph with $q$ edges. 
That is, we impose Dirichlet boundary conditions at all other pendant vertices, Kirchhoff plus continuity conditions at the central vertex, where again a mass $M$ may be placed, and at the pendant vertex chosen as root Neumann conditions for the Neumann problem  and 
Dirichlet conditions for the Dirichlet problem. 

In Subsection~3.1 we investigate the spectra of the corresponding Neumann and Dirichlet problems and their relation to each other. We prove
that the two spectra interlace non-strictly and if they have an eigenvalue $\lambda$ in common, then its multiplicity $p_D(\lambda)$
as a Dirichlet eigenvalue and its multiplicity $p_N(\lambda)$ as a Neumann eigenvalue
satisfy the inequalities $p_D(\lambda)\leq q-1$, $p_N(\lambda)\leq q-1$, and $p_D(\lambda)+p_N(\lambda)\leq 2q-3$.
Since the maximal multiplicity of an eigenvalue does not depend on the equation generating the problem, but only on the form
of the graph, these inequalities turn out to be analogues of inequalities proved in \cite{LP} for Sturm-Liouville problems on trees and in \cite{KP} for arbitrary graphs. We also establish a relation of the spectral 
functions of the above Dirichlet and Neumann problems with the boundary value problems for the star subgraph of $q-1$ edges obtained
from the original graph by deleting the main edge.

In Subsection~3.2 we show that the necessary conditions established in Sub\-section~3.1 are also sufficient for the solution of the inverse problem: 
given two sequences satisfying these conditions together with the length ${\bf l}$ of the main string and the lengths $l_j$ of the $q-1$ other strings, we construct a mass distribution so that the corresponding star graph of Stieltjes strings with root at a pendant vertex has these two sequences as Neumann and Dirichlet eigenvalues. Moreover, we show that the two spectra and the total length of the main
edge (i.e.\ the edge incident to the root) uniquely determine the masses and the lengths of the intervals between them
on this main edge; the mass distribution on the other edges cannot be uniquely determined.
The recovering procedure is based on the decomposition of 
the ratio of the characteristic functions of the Dirichlet and the Neumann boundary value problem, which is a Stieltjes function, 
into branching continued fractions.
In fact, the coefficients at the non-branching part of this expansion are the uniquely determined masses and the subintervals between them on
the main string, while the mass distribution on the $q-1$ other edges may be recovered by our first inverse theorem
and algorithm. An example in Section \ref{sec:ex} illustrates that our method, in fact, allows to construct all solutions of the inverse problem.

In Section \ref{sec:6} we compare our results with those in \cite{D} and \cite{NU}  (see also \cite{G1}). Tree-patterned (or acyclic) matrices as considered in \cite{D} and \cite{NU} are in some sense generalizations of Jacobi matrices. The results of Section 3 of the present paper provide sufficient conditions for the existence of a star-patterned matrix with two given sequences being the spectra of the matrix and its first principal submatrix.   

\section{Star graph with root at the centre}

A Stieltjes string is a thread (i.e.\ an elastic string of zero density)
bearing a finite number of point masses.  A complete theory for direct and inverse spectral problems for Stieltjes strings
was developed by F.\,R.\ Gantmakher and M.\,G.\ Krein in \cite{GK}. 

In this section, we consider a plane star graph of $q\ (\ge  2)$ Stieltjes strings 
joined at the central vertex called the root where a mass $M\ge 0$ is placed and with all $q$ pendant vertices fixed. 
We assume that this web is stretched and study its small transverse vibrations in two different cases: 
\begin{enumerate}
\item[(N1)] the mass $M$ at the central vertex is free to move in the direction orthogonal to the
equilibrium position of the strings (Neumann problem), \vspace{1mm}
\item[(D1)] the mass $M$ at the central vertex is fixed (Dirichlet problem).
\end{enumerate}
We investigate the relation of the eigenfrequencies of the Neumann problem (N1) to those of the problem (D1) which 
decouples completely into $q$ Dirichlet problems on the pendant edges of the star graph.

In the sequel, we label the edges of the star graph by $j=1,2,\dots,q$ $(q\ge 2)$ and we assume that each edge is a Stieltjes string.
We suppose that the $j$-th edge consists of $n_j+1$ ($n_j\geq 0$) intervals of length $l_k^{(j)}$ ($k=0,1,\ldots , n_j$) with
point masses $m_k^{(j)}$ ($k=1,2,\ldots , n_j$) separating them (both counted from the exterior towards the centre);
the length of the $j$-th edge is denoted by $l_j := \sum_{k=0}^{n_j}l_k^{(j)}$.

\begin{center}
\begin{figure}[h]
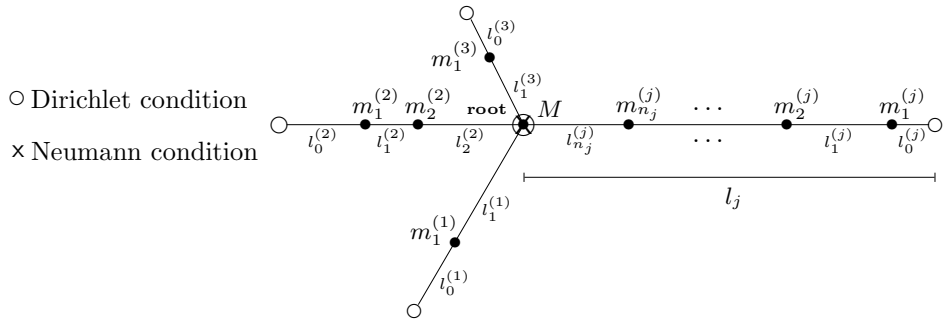

\psset{unit=7mm}\pspicture*(-10,-3.7)(8.7,2.3)
\rput[C](-9.5,0.5){\circle{0.25}}
\rput[L](-7.3,0.5){Dirichlet condition}
\rput[C](-9.6,-0.5){{\sf x}}
\rput[L](-7.2,-0.5){Neumann condition}
\rput[C](-0.7,0.3){{\bf \tiny root}}
\rput[C](0.5,0.3){\small $M$}
\psline[linewidth=.3pt,linecolor=black]{-}(-4.5,0)(0,0)
\psline[linewidth=.3pt,linecolor=black]{-}(0,0)(-1,2)
\psline[linewidth=.3pt,linecolor=black]{-}(0,0)(-2,-3.42)
\psline[linewidth=.3pt,linecolor=black]{-}(0,0)(7.7,0)
\psline[linewidth=.3pt,linecolor=black]{|-|}(0,-1)(7.82,-1)
\rput[C](0,0){{\sf X}}
\rput[C](0.195,0){\circle{0.41}}
\rput[C](-4.49,0){\circle{0.28}}
\rput(-0.95,2.12){\circle{0.25}}
\rput(-1.95,-3.54){\circle{0.25}}
\rput(7.94,0){\circle{0.25}}
\rput[C](7,0){$\bullet$}
\rput[C](7.4,-0.3){{\tiny $l_0^{(j)}$}}
\rput[C](7.2,0.4){{\small $m_1^{(j)}$}}
\rput[C](5,0){$\bullet$}
\rput[C](6,-0.3){{\tiny $l_1^{(j)}$}}
\rput[C](5.2,0.4){{\small $m_2^{(j)}$}}
\rput[C](3.5,0.3){$\ldots$}
\rput[C](3.5,-0.3){$\ldots$}
\rput[C](2,0){$\bullet$}
\rput[C](2.2,0.4){{\small $m_{n_j}^{(j)}$}}
\rput[C](1.1,-0.3){{\tiny $l_{n_j}^{(j)}$}}
\rput[C](4,-1.4){\small $l_j$}
\rput[C](0,0){$\bullet$}
\rput[C](-3,0){$\bullet$}
\rput[C](-2.8,0.4){{\small $m_1^{(2)}$}}
\rput[C](-3.8,-0.3){{\tiny $l_0^{(2)}$}}
\rput[C](-2,0){$\bullet$}
\rput[C](-1.8,0.4){{\small $m_2^{(2)}$}}
\rput[C](-2.5,-0.3){{\tiny $l_1^{(2)}$}}
\rput[C](-1,-0.3){{\tiny $l_2^{(2)}$}}
\rput[C](-1.3,-2.25){$\bullet$}
\rput[C](-1.7,-2){{\small$m_1^{(1)}$}}
\rput[C](-0.5,-1.6){{\tiny $l_1^{(1)}$}}
\rput[C](-1.3,-3){{\tiny $l_0^{(1)}$}}
\rput[C](-0.64,1.27){$\bullet$}
\rput[C](-1.3,1.3){{\small$m_1^{(3)}$}}
\rput[C](0.1,0.75){{\tiny $l_1^{(3)}$}}
\rput[C](-0.4,1.75){{\tiny $l_0^{(3)}$}}
\endpspicture
\vspace{-2mm}
\caption{{\small Star graph with root at the central vertex}}
\end{figure}
\end{center}

By $v_k^{(j)}(t)$ ($k=1,2,\ldots ,n_j$, $j=1,2,\ldots ,q$) we denote 
the transverse displacement of the $k$-th point mass $m_k^{(j)}$ (counted from the exterior) 
on the $j$-th string at time~$t$, and by $v_0^{(j)}(t)$, $v_{n_j+1}^{(j)}(t)$ those of the ends of the $j$-th string.
If we assume the threads to be stretched by forces each equal to 1, the Lagrange equations for the 
small transverse vibrations of the net are given by (compare \cite[Chapter~III.1]{GK})
\begin{align*}
 \frac{v_k^{(j)}(t)-v_{k+1}^{(j)}(t)}{l_k^{(j)}}+
 \frac{v_k^{(j)}(t)-v_{k-1}^{(j)}(t)}{l_{k-1}^{(j)}}+
  m_k^{(j)}v_k^{(j)\prime\prime}(t)=0 & \\  
  (k=1,2,\ldots ,n_j, \ j=1,2,\ldots ,q) &.
\end{align*}
At the central vertex joining the edges the continuity of the net requires that
\[
v_{n_1+1}^{(1)}(t)=v_{n_2+1}^{(2)}(t)=\ldots =v_{n_q+1}^{(q)}(t),
\] 
and the balance of forces implies that
\[
\mathop{\sum}\limits_{j=1}^{q}\frac{v_{n_j+1}^{(j)}(t)-v_{n_j}^{(j)}(t)}{l_{n_j}^{(j)}}=-Mv_{n_1+1}^{(1)\,\prime\prime}(t).
\]
Since all pendant vertices are supposed to be fixed, their displacements $v_0^{(j)}(t)$ $(j=1,2,\ldots,q)$ satisfy 
the Dirichlet boundary conditions
\[
v_0^{(j)}(t)=0 \quad (j=1,2,\ldots ,q).
\]
Separation of variables $v_k^{(j)}(t)=u_k^{(j)}{\rm e}^{{\rm i}\lambda t}$ leads to the fol\-low\-ing difference equations
for the displacement amplitudes~$u_k^{(j)}$ 
($k=0,1,2,\ldots ,n_j$, $j=1,2,\ldots ,q$) for the Neumann and Dirichlet problem:

\smallskip

{\bf Neumann problem (N1).} If the central vertex carrying the mass $M$ is allowed to move freely, we obtain
\begin{eqnarray}
\label{2.1}
&&\frac{u_k^{(j)}\!\!-\!u_{k+1}^{(j)}}{l_k^{(j)}}\!+\!\frac{u_k^{(j)}\!\!-\!u_{k-1}^{(j)}}{l_{k-1}^{(j)}}-m_k^{(j)}\lambda^2u_k^{(j)}=0
\ \ (k\!=\!1,2,..,n_j, \,j\!=\!1,2,..,q),
\\[-0.5mm]
\label{2.2}
&& u_{n_1+1}^{(1)}=u_{n_2+1}^{(2)}=\ldots =u_{n_{q}+1}^{(q)},
\\
\label{2.3}
&& \mathop{\sum}\limits_{j=1}^{q}\frac{u_{n_j+1}^{(j)}-u_{n_j}^{(j)}}{l_{n_j}^{(j)}}=M\lambda^2u_{n_1+1}^{(1)},
\\ 
\label{2.4}
&& u_0^{(j)}=0 \quad  (j=1,2,\ldots ,q). 
\end{eqnarray} 

\smallskip

{\bf Dirichlet problem (D1).} If we clamp all strings at the central vertex, the problem decouples and consists of  
the $q$ separate problems on the edges with Dirichlet boundary conditions at both ends,
\begin{eqnarray}
\label{2.5}
&&\frac{u_k^{(j)}\!\!-\!u_{k+1}^{(j)}}{l_k^{(j)}}\!+\!\frac{u_k^{(j)}\!\!-\!u_{k-1}^{(j)}}{l_{k-1}^{(j)}}-m_k^{(j)}\lambda^2u_k^{(j)}=0
\ \  (k\!=\!1,2,..,n_j),
\\[-0.5mm]
\label{2.6}
&& u_{n_j+1}^{(j)}=0,
\\
\label{2.7}
&& u_0^{(j)}=0
\end{eqnarray}
for all $j=1,2,\ldots,q$.

\pagebreak

Note that the Neumann problem (N1) and the Dirichlet problem (D1) share the equations (\ref{2.1}) and (\ref{2.4}). If $M$ tends to $\infty$, the Neumann problem (N1) becomes the Dirichlet problem (D1); indeed, in this case the condition (\ref{2.3}) becomes $u_{n_1+1}^{(1)}=0$ and, together with (\ref{2.2}), it becomes equivalent to (\ref{2.6}) for $j=1,2,\dots,q$.

\medskip

\noindent
{\bf Notation.} In the following two subsections we denote by
\begin{enumerate}
\item $n=\mathop{\sum}_{j=1}^{q}n_j$ the number of masses on the star graph without the mass $M$ in the~centre, \vspace{1mm}
\item $ \left\{ \begin{array}{ll} \{\lambda_k\}_{k=-(n+1),\,k\neq 0}^{n+1} & \mbox{if } M>0, \\ \{\lambda_k\}_{k=-n,\,k\neq 0}^{n} & \mbox{if } M=0, \end{array} \right.$ $\lambda_{-k}=-\lambda_k$, $\lambda_k\ge\lambda_{k'}$ for
$k>k'>0$\vspace{1mm}, the eigen\-values of the Neumann problem (\ref{2.1})--(\ref{2.4}) on the star graph, \vspace{1mm}\vspace{1mm}
\item $\{\nu_\kappa^{(j)}\}_{\kappa=-n_j,\,\kappa\ne 0}^{n_j}$, $\nu_{-\kappa}^{(j)}=-\nu_\kappa^{(j)}$, $\nu_\kappa^{(j)}>\nu_{\kappa'}^{(j)}$ for $\kappa>\kappa'>0$, the eigenvalues of the Dirichlet problem (\ref{2.5})--(\ref{2.7}) on the $j$-th edge for $j=1,2,\dots,q$, \vspace{1mm}
\item $\{\zeta_k\}_{k=-n,\, k\neq 0}^n=\bigcup_{j=1}^q \!\big\{\nu_\kappa^{(j)}\big\}_{\kappa=-n_j,\,\kappa\neq 0}^{n_j}$,
$\zeta_{-k}=-\zeta_k$, $\zeta_k \ge \zeta_{k'}$ for $k>k'\!>0$, the eigenvalues of the Dirichlet problem (D1) on the star graph.
\end{enumerate}

\subsection{Direct spectral problem for a star graph with root at the centre}
\label{subsec:2.1}

In this subsection we investigate the interlacing properties and multiplicities
of the eigenvalues of the Neumann problem (N1) and the Dirichlet problem (D1).

According to \cite[Supplement~II.4]{GK}, for each $j=1,2,\dots,q$, one may obtain the solutions 
$u_k^{(j)}$ ($k=1,2,\dots,n_j+1$) of (\ref{2.1}) with Dirichlet condition $u_0^{(j)}=0$ as in (\ref{2.4}) successively in the form
$$
u_k^{(j)}=R_{2k-2}^{(j)}(\lambda^2)u_1^{(j)} \quad (k=1,2,\ldots ,n_j+1), 
$$
where $R_{2k-2}^{(j)}(\lambda^2)$  are polynomials of degree $2k-2$ which can be obtained solving~(\ref{2.1}). 
If we \vspace{-1.5mm} set 
$$
R_{2k-1}^{(j)}(\lambda^2):=\frac{R_{2k}^{(j)}(\lambda^2)-R_{2k-2}^{(j)}(\lambda^2)}{l_k^{(j)}} \quad (k=1,2,\ldots ,n_j),
$$
then, due to (\ref{2.1}) (or, equivalently (\ref{2.5})) and \eqref{2.4}, the polynomials $R_{0}^{(j)}$, $R_{1}^{(j)}$, \dots, $R_{2n_j}^{(j)}$ satisfy the
recurrence  \vspace{-1mm} relations 
\begin{eqnarray}
\label{2.8}
&&R_{2k-1}^{(j)}(\lambda^2)
=-\lambda^2 m_{k}^{(j)}R_{2k-2}^{(j)}(\lambda^2)+R_{2k-3}^{(j)}(\lambda^2),
\\
\label{2.9}
&&R_{2k}^{(j)}(\lambda^2)
=l_k^{(j)}R_{2k-1}^{(j)}(\lambda^2)+R_{2k-2}^{(j)}(\lambda^2) \quad
(k=1,2,\ldots ,n_j),
\\
\label{2.10}
&& R_{-1}^{(j)}(\lambda^2)=\frac{1}{l_0^{(j)}}, \quad R_0^{(j)}(\lambda^2)=1. \\[-4mm] \nonumber
\end{eqnarray}
The spectrum of the problem on the $j$-th edge, depending on the boundary condition at the other end point, 
is then given by the zeros of the polynomial
\begin{equation}
\label{vienna17}
  \left\{ \begin{array}{lll}
  \phi_{D}^{(j)} (\lambda^2) := R_{2n_j}^{(j)}(\lambda^2) \quad & \mbox{for the Dirichlet condition} & u_{n_j+1}^{(j)}=0, \\
  \phi_{N}^{(j)} (\lambda^2) := R_{2n_j-1}^{(j)}(\lambda^2) \quad & \mbox{for the Neumann  condition} & u_{n_j+1}^{(j)}=u_{n_j}^{(j)}.
  \end{array} \right. \hspace{-6mm}
\end{equation}


A crucial tool in the study of the eigenfrequencies of Stieltjes strings is the notion of Nevanlinna and $S_0$-functions:

\noindent
\begin{definition}
\label{def:2.1}
\cite[\S 1]{KK1}. {A function $f:z\mapsto f(z)$ of a complex variable $z$ (or simp\-ly $f(z)$ by abuse of notation) is called
\emph{Nevanlinna function} (\emph{$R$-function} in terms of~\cite{KK1}) if \\[1mm]
\hspace*{3mm} 1) $f$ is analytic for $z$ in the half-planes $\Im z>0$ and $\Im z<0$, \\
\hspace*{3mm} 2) $f(\overline{z})=\overline{f(z)}$ for $\Im z\not=0$,  \\
\hspace*{3mm} 3) $\Im z \cdot \Im f(z)\geq 0$ for $\Im z\not=0$,}
\\[1mm]
and it is called an \emph{S-function} if, in addition,  \\[1mm]
\hspace*{3mm} 4) $f$ is analytic for $z \notin [0,\infty)$, \\
\hspace*{3mm} 5) $f(z)>0$ for $z \in(-\infty,0)$;
\\[1mm]
an $S$-function $f(z)$ is called an \emph{$S_0$-function} if \\[1mm]
\hspace*{3mm} 6) $0$ is not a pole of $f$.
\end{definition}

The following basic properties of rational $S_0$-functions and characterization of them will be used throughout this paper.

\begin{lemma}
\label{lem:2.1a}
Let $f$ be a rational $S_0$-function and let $p\in\N$ be the number of its poles.~Then 
\begin{enumerate}
\item[{\rm i)}] $f$ admits a unique continued fraction expansion
\begin{equation}
\label{vienna15a}
 f(z) = a_0+\frac{1}{-b_1z+\frac{1}{a_1+\frac{1}{-b_2z+\dots +\frac{1}{a_{p-1}+\frac{1}{-b_pz+\frac{1}{a_p}}}}}}
\end{equation}
with $a_0 = \lim_{z\to\pm\infty} f(z) \ge 0$ and $a_k$, \vspace{1mm}$b_k>0$ $(k=1,2,\dots,p);$
\item[{\rm ii)}] the  number of zeros of $f$ is \vspace{1mm}
$
 \left\{ \begin{array}{cl} p  & \mbox{ if } \ a_0>0, \\ p-1  & \mbox{ if } \ a_0=0, \end{array} \right. 
$
\item[{\rm iii)}] the poles $\alpha_k$ and zeros $\beta_k$ of $f$ are all simple and interlace strictly, 
\[
\left\{ \begin{array}{ll} 
0< \alpha_1 < \beta_1 < \alpha_2 <  \cdots < \beta_{p-1} < \alpha_p < \beta_p & \mbox{ if } \ a_0>0, \\
0< \alpha_1 < \beta_1 < \alpha_2 <  \cdots < \beta_{p-1} < \alpha_p  & \mbox{ if } \ a_0=0; \end{array} \right.\]
\item[{\rm iv)}] $f$ is strictly increasing between its poles, i.e.\ in the intervals 
$(-\infty,\alpha_1)$, $(\alpha_k,\alpha_{k+1})$ \ $(k=1,2,\dots,p-1)$, and \vspace{1mm}$(\alpha_p,\infty)$.
\item[{\rm v)}] if $f_i(z)$ is the $i$-th tail of the continued fraction {\rm (\ref{vienna15a})} $(i=0,1,\dots,p)$,~i.e.\
\begin{equation}
\label{vienna16}
 f_i(z) = a_i+\frac{1}{-b_{i+1} z+\frac{1}{a_{i+1}+\frac{1}{-b_{i+2}z+\dots +\frac{1}{a_{p-1}+\frac{1}{-b_pz+\frac{1}{a_p}}}}}},
\end{equation}
and $\beta_k^i$ are the zeros of $f_i$, then  
\begin{align*}
 \begin{cases}  
 \beta_k^{i-1} \le \beta_k^i & \mbox{ for } i=1, \\
 \beta_k^{i-1} < \beta_k^i & \mbox{ for } i=2,3,\dots,p-1,
 \end{cases} \quad (k=1,2,\dots,p-i), 
 \end{align*}
in particular, $\beta_1 \!=\! \beta_1^0 \!\le \!\beta_1^1$; the non-strict inequalities become strict if~$a_0\!>\!0$.
\end{enumerate}
Vice versa, if $f(z)$ is a rational function whose poles and zeros interlace as in {\rm iii)} with $a_0=\lim_{z\to\infty} f(z)$, then $f$ is an $S_0$-function.
\end{lemma}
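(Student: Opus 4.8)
The plan is to derive everything from the partial fraction expansion of $f$ together with elementary monotonicity arguments and an induction on the number $p$ of poles. First I would record that, being a rational Nevanlinna function, $f$ admits a representation $f(z)=Az+B+\sum_{k=1}^{p}c_k/(\alpha_k-z)$ with $A\ge 0$, $c_k>0$, $\alpha_k\in\R$; property~4) of Definition~\ref{def:2.1} forces the poles $\alpha_k$ to be positive reals ($0$ is excluded by the $S_0$-property, i.e.\ property~6)), $A=0$ (otherwise $f(z)\to-\infty$ as $z\to-\infty$, contradicting~5)), and then $B=\lim_{z\to\pm\infty}f(z)\ge 0$ by~5). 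Writing $a_0:=B$ and ordering $0<\alpha_1<\dots<\alpha_p$ we get $f(z)=a_0+\sum_{k=1}^{p}c_k/(\alpha_k-z)$ with $a_0\ge 0$, $c_k>0$; in particular the poles are simple and $f'(z)=\sum_k c_k(\alpha_k-z)^{-2}>0$ wherever $f$ is finite, which is~iv). Since $f$ is continuous, strictly increasing and runs between the boundary values $a_0$ and $\pm\infty$ on each of $(-\infty,\alpha_1)$, $(\alpha_k,\alpha_{k+1})$, $(\alpha_p,\infty)$, it has no zero in $(-\infty,\alpha_1)$, exactly one in each $(\alpha_k,\alpha_{k+1})$, and exactly one in $(\alpha_p,\infty)$ precisely when $a_0>0$; a degree count rules out further (complex) zeros. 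This gives ii) and iii).

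For the continued fraction~(\ref{vienna15a}) I would induct on $p$, the cases $p\le 1$ being direct (for $p=1$, $1/(f-a_0)=(\alpha_1-z)/c_1=-b_1z+a_1$ with $b_1=1/c_1>0$, $a_1=\alpha_1/c_1>0$). For $p\ge 2$ put $b_1:=\big(\sum_{k=1}^{p}c_k\big)^{-1}>0$ and $f_1:=\big(1/(f-a_0)+b_1z\big)^{-1}$; the crux is that $f_1$ is again a rational $S_0$-function, now with $p-1$ poles and with $f_1(\infty)>0$. Indeed, $f-a_0=\sum_k c_k/(\alpha_k-z)$ is an $S_0$-function vanishing at infinity with $p$ poles $\alpha_k$ and, by the above, $p-1$ simple positive zeros $\beta_k$ with $\alpha_k<\beta_k<\alpha_{k+1}$; polynomial division and partial fractions then give $1/(f-a_0)=-b_1z+a_1^{*}+\sum_{k=1}^{p-1}d_k/(\beta_k-z)$, where the residue identity $d_k=-1/(f-a_0)'(\beta_k)=-1/f'(\beta_k)$ yields $d_k<0$ by~iv), while $a_1^{*}=\lim_{z\to\infty}\big(1/(f-a_0)+b_1z\big)=\big(\sum_k c_k\alpha_k\big)\big(\sum_k c_k\big)^{-2}>0$ since all $\alpha_k>0$. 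Hence $1/f_1=a_1^{*}+\sum_k d_k/(\beta_k-z)$ is strictly decreasing between its simple positive poles $\beta_k$, equals $a_1^{*}>0$ at $\pm\infty$ and equals $1/f_1(0)=f(0)-a_0=\sum_k c_k/\alpha_k>0$ at $z=0$; the same interval analysis as above (now "upside down") shows its zeros $\gamma_k$ are all positive and interlace the $\beta_k$, so $f_1$ has exactly these $p-1$ simple positive poles, and a degree count plus the residue identity give $f_1(z)=1/a_1^{*}+\sum_{k=1}^{p-1}e_k/(\gamma_k-z)$ with $e_k>0$. Thus $f_1$ is a rational $S_0$-function with $p-1$ poles and $f_1(\infty)=1/a_1^{*}>0$; substituting its continued fraction (supplied by the induction hypothesis, with all coefficients positive) into $f=a_0+\big(-b_1z+1/f_1\big)^{-1}$ produces the expansion of $f$ with $a_0\ge 0$ and all $a_k,b_k>0$ for $k\ge 1$ — the $a_k$ with $k\ge 1$ being the limits at infinity of successive $S_0$-tails, hence strictly positive.

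Uniqueness in~(\ref{vienna15a}) is then automatic: letting $z\to\infty$ successively forces $a_0=\lim f$, $b_1=-\lim_{z\to\infty}z^{-1}/(f-a_0)$, $a_1=\lim_{z\to\infty}\big(1/(f-a_0)+b_1z\big)$, and so on, pinning down every coefficient. For part~v), write $f_i$ for the $i$-th tail~(\ref{vienna16}); by the construction above each $f_i$ is a rational $S_0$-function with $p-i$ poles and $f_i(\infty)=a_i$, which is $>0$ for $i\ge 1$. From $f_{i-1}-a_{i-1}=\big(-b_iz+1/f_i\big)^{-1}$ one reads off that a finite $z$ is a zero of $f_i$ exactly when $f_{i-1}(z)=a_{i-1}$. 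Since $f_{i-1}$ is strictly increasing between its poles with limits $a_{i-1}$ at $\pm\infty$, each of its $p-i$ bounded inter-pole intervals contains exactly one zero $\beta_k^{i-1}$ of $f_{i-1}$ and exactly one zero $\beta_k^{i}$ of $f_i$; because $0\le a_{i-1}$ and $f_{i-1}$ increases, $\beta_k^{i-1}\le\beta_k^{i}$, with the inequality strict unless $a_{i-1}=0$, which can happen only for $i-1=0$. This is precisely v), including $\beta_1=\beta_1^0\le\beta_1^1$ and the strictness for $a_0>0$.

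For the converse, if $f$ is rational with poles $\alpha_k$ and zeros $\beta_k$ arranged as in~iii) and $a_0:=\lim_{z\to\infty}f(z)$, then $f(z)=C\prod_m(\beta_m-z)\big/\prod_m(\alpha_m-z)$ with leading coefficient $C>0$ (equal to $a_0$ when the number of zeros is $p$, and positive in the remaining case by the prescribed sign pattern), and the residue computation $c_k=-\mathrm{Res}_{\alpha_k}f=C\,\prod_m(\beta_m-\alpha_k)\big/\prod_{m\ne k}(\alpha_m-\alpha_k)$ shows $c_k>0$, since exactly $k-1$ factors in the numerator and in the denominator are negative. Hence $f(z)=a_0+\sum_k c_k/(\alpha_k-z)$ with $a_0\ge 0$, $c_k>0$, $\alpha_k>0$, so properties~1)--6) of Definition~\ref{def:2.1} are immediate and $f$ is an $S_0$-function. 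The only genuinely substantial point in all of this is the peeling step $f\mapsto f_1$: verifying that it lowers the number of poles by exactly one and stays inside the $S_0$-class. The two delicate spots there are the sign of the residues $d_k$ (controlled by the strict monotonicity~iv)) and the strict positivity of $a_1^{*}$, of $f_1(0)=f(0)-a_0$, and of the new poles $\gamma_k$ — which is exactly the place where the $S_0$-hypothesis that $0$ is not a pole of $f$, equivalently $\alpha_k>0$, is used; the rest is bookkeeping.
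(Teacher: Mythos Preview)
Your proof is correct and complete, apart from two harmless slips: in the base case $p=1$ you should have $1/a_1=\alpha_1/c_1$ rather than $a_1=\alpha_1/c_1$, and the value $(1/f_1)(0)$ equals $1/(f(0)-a_0)$, not $f(0)-a_0$; neither affects the positivity you need.

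Your approach, however, differs substantially from the paper's. The paper argues by citation: items ii)--iv) are obtained from the integral representation of $S_0$-functions in \cite{KK1}; the continued fraction expansion i) is taken from \cite[Appendix~II.3]{GK} (reducing the case $a_0=0$ to $a_0>0$ by considering $f+a$); and the converse is derived from \cite[Theorem~II.2.1]{Atk}. Only v) is argued directly, and there the paper proceeds exactly as you do, via the identity $f_{i-1}(z)-a_{i-1}=f_i(z)/(-b_i z f_i(z)+1)$ and monotonicity. Your route is self-contained: you start from the partial-fraction form $f(z)=a_0+\sum_k c_k/(\alpha_k-z)$, read off monotonicity and interlacing directly, and obtain the continued fraction by an inductive ``peeling'' step $f\mapsto f_1$ that stays inside the class of rational $S_0$-functions while dropping the pole count by one. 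For the converse you verify $c_k>0$ by an explicit residue sign count rather than invoking Atkinson. The paper's proof is shorter on the page but imports nontrivial machinery; yours is longer but elementary and makes the structure of the continued-fraction algorithm transparent, in particular singling out the strict positivity of $a_1^*$ and of $f_1(0)$ as the places where the $S_0$-hypothesis $\alpha_k>0$ is actually used.
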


\begin{proof}
ii), iii), iv) 
From the integral representation of $S_0$-functions (see e.g.\ \cite[(S1.5.1), (S1.5.6)]{KK1}) it follows that $f$ is strictly increasing between two poles and that $a_0 := \lim_{z\to-\infty} f(z) \ge 0$. This implies claim iv) and the strict interlacing of poles and zeros of $f$. The latter shows, in particular, that the number of zeros differs at most by one from the number of poles.

Since an $S$-function is strictly positive on $(-\infty,0)$ by property 5) and $0$ is not a pole, we must have $0<\alpha_1 < \beta_1$. This shows that there are either $p$ or $p-1$ zeros; the latter occurs if and only if $0=\lim_{z\to\infty} f(z) = \lim_{z\to-\infty} f(z)=a_0$. 

i) If $a_0> 0$ and hence $f$ has the same number of zeros and poles, $f$ has a continued fraction expansion \eqref{vienna15a} by \cite[Appendix II.3]{GK}; if $a_0 = 0$, we consider $f+a$ for an arbitrary constant $a>0$. That the leading term in the expansion \eqref{vienna15a} coincides with $a_0$ follows by taking the limits $z\to\pm\infty$ in \eqref{vienna15a}.

v) It suffices to prove the first inequality for $i=1$, i.e.\ to show that $\beta_k \le \beta_k^1$ and $\beta_k < \beta_k^1$ if $a_0>0$ \ $(k=1,2,\dots,p-1)$.  By the definition of $f=f_0$ and $f_1$, we have 
\[
  f(z) - a_0 = \frac {f_1(z)}{-b_1 z f_1(z) +1}.
\]
This implies that $f_1(z)=0$ if and only if $f(z)=a_0\ge 0$. Since $f$ is strictly increasing between its poles, it follows that every zero $\beta_k^1$ of $f_1$ is greater or equal than the respective zero $\beta_k$ of $f$, and strictly greater if $a_0>0$. 

In order to prove the last claim, we use that if $f$ is a rational function whose poles and zeros are all simple and interlace strictly, then $f$ or $-f$ is a Nevanlinna function by \cite[Theorem~II.2.1]{Atk}. 
Since all poles and zeros are positive and the first pole is smaller than the first zero, it follows that $f$ is an $S_0$-function.
\end{proof}

\begin{remark}
\label{rem:nev}
It is well-known that the quotient of the functions in \eqref{vienna17},
\begin{equation}
\label{phij}
\phi^{(j)}(z):= \frac{\phi_D^{(j)}(z)}{\phi_N^{(j)}(z)} =  \frac{R_{2n_j}^{(j)}(z)}{R_{2n_j-1}^{(j)}(z)}, 
\end{equation}
is an $S_0$-function and that the constants in the corresponding continued fraction expansion are the lengths $l_k^{(j)}\!$ and masses $m_k^{(j)}$ $(j=1,2,\dots,q)$ (see \cite[Supplement~II,~(18)]{GK}): 
\begin{equation}
\label{confrac}
\phi^{(j)}(z)
=l_{n_j}^{(j)}+\frac{1}{-m_{n_j}^{(j)}z+\frac{1}{l_{n_j-1}^{(j)}+\frac{1}{-m_{n_j-1}^{(j)}z+
\ldots +\frac{1}{l_1^{(j)}+\frac{1}{-m_1^{(j)}z+\frac{1}{l_0^{(j)}}}}}}};
\end{equation}
in particular, the polynomials $\phi_D^{(j)}(z)=R_{2n_j}^{(j)}(z)$ and $\phi_N^{(j)}(z)=R_{2n_j-1}^{(j)}(z)$ have only simple zeros.
In fact, by (\ref{2.8})--(\ref{2.10}), we~have 
\begin{equation}
\label{2.13a}
  \displaystyle
  \frac{R_0^{j)}(z)}{R_{-1}^{(j)}(z)} = l_0^{(j)}, \quad   
  \frac{R_{2n_j}^{(j)}(z)}{R_{2n_j-1}^{(j)}(z)} 
      = l_{n_j}^{(j)} + \frac1{-m_{n_j}^{(j)} z + \frac1{\frac{R_{2n_j-2}^{(j)}(z)}{R_{2n_j-3}^{(j)}(z)}}}, \quad 
\end{equation}
for $j=1,2,\dots,q$. So the continued fraction expansion \eqref{confrac} follows by induction.
\end{remark}

For the Neumann problem (N1), the conditions (\ref{2.2}) and (\ref{2.3}) at the central vertex yield the following system of linear equations for 
$u_1^{(j)}$ ($j=1,2,\dots,q$):
\begin{eqnarray}
&& R_{2n_1}^{(1)}(\lambda^2)u_1^{(1)}=R_{2n_2}^{(2)}(\lambda^2)u_1^{(2)}=\ldots
=R_{2n_{q}}^{(q)}(\lambda^2)u_1^{(q)},
\\
&& \mathop{\sum}_{j=1}^{q}R_{2n_j-1}^{(j)}(\lambda^2)u_1^{(j)}=M\lambda^2
R^{(1)}_{2n_1}(\lambda^2)u_{1}^{(1)}.
\end{eqnarray}
Therefore, the spectrum of the Neumann problem (\ref{2.1})--(\ref{2.4}) coincides with the set of zeros of the polynomial
\begin{equation}
\label{2.11}
\phi_{N,q}(\lambda^2):=\mathop{\sum}\limits_{j=1}^{q}\left[ \left(
R_{2n_j-1}^{(j)}(\lambda^2)-\frac{M}{q}\lambda^2R_{2n_j}^{(j)}(\lambda^2)\right)
\mathop{\prod}\limits_{k=1,\,k\not=j}^{q}R_{2n_k}^{(k)}(\lambda^2) \right].
\end{equation}

For the Dirichlet problem (D1), the conditions (\ref{2.6}) imply $R_{2n_j}^{(j)}(\lambda^2)=0$  
and hence the spectrum of the Dirichlet problem (\ref{2.5})--(\ref{2.7}) for $j=1,2,\dots,q$ coincides with the set of zeros of the polynomial
\begin{equation}
\label{2.12}
\phi_{D,q}(\lambda^2):=\mathop{\prod}\limits_{j=1}^{q}R_{2n_j}^{(j)}(\lambda^2).
\end{equation}

Note that the polynomial $\phi_{N,q}$ may also be written as 
\begin{equation}
\label{2.13}
\phi_{N,q}(\lambda^2)= \left(
\mathop{\sum}\limits_{j=1}^{q} 
\frac{R_{2n_j-1}^{(j)}(\lambda^2)}{R_{2n_j}^{(j)}(\lambda^2)}- M \lambda^2 \right) \phi_{D,q}(\lambda^2).
\end{equation}

\begin{theorem}
\label{thm:2.1}
After cancellation of common
factors $($if any$)$ in the numerator and in the denominator,  the
function
\begin{equation}
\label{vienna18}
 \phi_q(z):= \frac{\phi_{D,q}(z)}{\phi_{N,q}(z)} =  \frac 1{\sum\limits _{j=1}^q \displaystyle{\frac 1{ \phi^{(j)}(z)}} -Mz  }
 \vspace{-2mm}
\end{equation} 
becomes an $S_0$-function.
\end{theorem}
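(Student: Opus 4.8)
The plan is to verify that $\phi_q(z)$ satisfies the characterization in the last sentence of Lemma~\ref{lem:2.1a}: it suffices to show that $\phi_q$ is a rational function whose poles and zeros are all simple, positive, and interlace strictly, with the smallest pole below the smallest zero, and with $\lim_{z\to\infty}\phi_q(z)=a_0\ge 0$. The most economical route, however, is to recognize $\phi_q$ as built from $S_0$-functions by operations that preserve the $S_0$-class. Write $\psi_q(z):=\sum_{j=1}^q 1/\phi^{(j)}(z)-Mz$, so that $\phi_q(z)=1/\psi_q(z)$ after cancellation. By Remark~\ref{rem:nev} each $\phi^{(j)}$ is an $S_0$-function, hence so is each reciprocal $1/\phi^{(j)}$ — indeed $\phi^{(j)}$ is a ratio of polynomials with strictly interlacing positive zeros and poles and $\phi^{(j)}(z)>0$ on $(-\infty,0)$ with $0$ not a pole, and the same three properties (Nevanlinna, real-symmetric, positive on the negative axis, $0$ not a pole) are inherited by $1/\phi^{(j)}$ because inversion maps the closed upper half-plane to itself with a sign flip that is compensated by property~3 being stated with $\Im z\cdot\Im f(z)\ge 0$; one must only check that $1/\phi^{(j)}$ has no pole at $0$, i.e.\ that $\phi^{(j)}(0)\ne 0$, which holds since $\phi^{(j)}(0)=l_0^{(j)}+\cdots>0$ from the continued fraction \eqref{confrac}.

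Next I would observe that the class of Nevanlinna functions is closed under addition and that $-Mz$ is a Nevanlinna function for $M\ge 0$; hence $\psi_q=\sum_j 1/\phi^{(j)}-Mz$ is a Nevanlinna function, real-symmetric, and analytic off $[0,\infty)$. On $(-\infty,0)$ each $1/\phi^{(j)}(z)>0$ and $-Mz>0$, so $\psi_q(z)>0$ there; thus $\psi_q$ is an $S$-function. It is an $S_0$-function provided $0$ is not a pole, and $0$ is not a pole of any $1/\phi^{(j)}$ (just checked) nor of $-Mz$, so $\psi_q$ is a rational $S_0$-function. Since $\psi_q$ is positive on $(-\infty,0)$, in particular $\psi_q\not\equiv 0$, and $\psi_q(z)>0$ forces the reciprocal $\phi_q=1/\psi_q$ to again be analytic off $[0,\infty)$, real-symmetric, and positive on $(-\infty,0)$; it is a Nevanlinna function by the same half-plane inversion argument as above; and $0$ is not a pole of $\phi_q$ because $\psi_q(0)=\sum_j 1/\phi^{(j)}(0)>0$ is finite and nonzero. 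Hence $\phi_q$ is a rational $S_0$-function, which is the assertion; equivalently one concludes via Lemma~\ref{lem:2.1a}~iii) applied to $\psi_q$ that the zeros and poles of $\psi_q$ — i.e.\ the poles and zeros of $\phi_q$ — interlace strictly and are positive and simple, and then invokes the converse direction of Lemma~\ref{lem:2.1a}.

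The one point needing genuine care — the main obstacle — is the phrase ``after cancellation of common factors''. A priori the polynomials $\phi_{D,q}$ and $\phi_{N,q}$ from \eqref{2.12}, \eqref{2.13} may share zeros (this is precisely the non-strict interlacing phenomenon emphasized in the introduction), so the raw quotient $\phi_{D,q}/\phi_{N,q}$ is not yet in lowest terms. I would handle this by working directly with $\psi_q$: the identity \eqref{2.13} gives $\phi_{N,q}=\psi_q\cdot\phi_{D,q}$ as an equality of rational functions, so $\phi_q=\phi_{D,q}/\phi_{N,q}=1/\psi_q$ as rational functions regardless of cancellations, and the reduced form of $1/\psi_q$ is governed entirely by the $S_0$-property of $\psi_q$ established above (an $S_0$-function, being a ratio of coprime polynomials with strictly interlacing roots, is automatically in lowest terms). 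Thus the cancellation is automatic and the conclusion follows. A subsidiary check worth spelling out is that $\psi_q$ is not identically constant when at least one $n_j\ge 1$ or $M>0$, so that it genuinely has poles; in the degenerate case $n=0$, $M=0$ one has $\psi_q\equiv\sum_j 1/l_0^{(j)}$ and $\phi_q$ is a positive constant, still an $S_0$-function with $p=0$.
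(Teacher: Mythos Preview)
Your overall strategy matches the paper's: express $\phi_q^{-1}=\psi_q=\sum_j 1/\phi^{(j)}-Mz$, use the known $S_0$-property of each $\phi^{(j)}$, and check positivity on $(-\infty,0]$. The treatment of the cancellation issue and the degenerate case is fine.

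However, there is a genuine sign error that breaks the argument. Inversion does \emph{not} preserve the Nevanlinna class: if $f$ is Nevanlinna then $\Im(1/f(z))=-\Im f(z)/|f(z)|^2$, so $1/f$ is anti-Nevanlinna, and it is $-1/f$ that is again Nevanlinna. Your claim that ``the sign flip is compensated by property~3 being stated with $\Im z\cdot\Im f(z)\ge 0$'' is simply false. Consequently $1/\phi^{(j)}$ is \emph{not} an $S_0$-function (its smallest positive critical point is a zero, not a pole, and it decreases between poles), and for $M>0$ the function $-Mz$ is \emph{not} Nevanlinna either. So your intermediate assertion that $\psi_q$ is an $S_0$-function is wrong, and the second inversion step (from $\psi_q$ to $\phi_q=1/\psi_q$) repeats the same mistaken rule.

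The fix is exactly what the paper does: work with $-\psi_q=\sum_j(-1/\phi^{(j)})+Mz$, which \emph{is} a sum of Nevanlinna functions and hence Nevanlinna; then $\phi_q$ is Nevanlinna because $f$ is Nevanlinna iff $-1/f$ is. Positivity of $\phi_q$ on $(-\infty,0]$ follows as you argued, since each $\phi^{(j)}(z)>0$ there and $-Mz\ge 0$, giving $\psi_q(z)>0$ and hence $\phi_q(z)>0$; in particular $0$ is not a pole.
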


\begin{proof}
By \cite[Lemma S1.1.2]{GK}, the function $\phi_q(z)$ is a Nevanlinna function if so is $-\phi_q(z)^{-1}$. 
By (\ref{2.13}) and \eqref{phij}, the latter can be written as
\begin{equation}
\label{ND}
 -\phi_q(z)^{-1}=
 \mathop{\sum}\limits_{j=1}^{q}\left( - \frac{R_{2n_j-1}^{(j)}(z)}{R_{2n_j}^{(j)}(z)}+\frac{M}{q}z\right)
 = \mathop{\sum}\limits_{j=1}^{q} \left( - \frac 1{ \phi^{(j)}(z)} \right) + Mz ,
\end{equation}
which proves the identity \eqref{vienna18}.
Clearly, the function $Mz$ is a Nevanlinna function. By Remark \ref{rem:nev} the function $\phi^{(j)}(z)$ and hence $-\phi^{(j)}(z)^{-1}$ is a Nevanlinna function. Altogether, by \eqref{ND} we obtain that $-\phi_q(z)^{-1}$ is a Nevanlinna function.

Since $\phi^{(j)}(z)$ is an $S_0$-function by Remark \ref{rem:nev}, we have $\phi^{(j)}(z)> 0$ ($z\in(-\infty,0]$) and hence \eqref{vienna18} yields that
$\phi_q(z) >0$ \ $(z\in (-\infty,0])$;
in particular, $0$ is not a pole of $\phi_q(z)$.
\end{proof}


By means of Theorem \ref{thm:2.1}, we can now prove the following relations between the eigenvalues of the Neumann problem (N1) and the 
eigenvalues of the Dirichlet problem (D1).

\begin{theorem}
\label{thm:2.2}
If $M>0$, then the eigenvalues 
$\{\lambda_k\}_{k=-(n+1),\,k\neq 0}^{n+1}$, $\lambda_{-k}=-\lambda_k$,  of the Neumann problem {\rm (N1)} 
and the eigenvalues  $\{\zeta_k\}_{k=-n, \,k\neq 0}^n$, $\zeta_{-k}=-\zeta_k$, of the Dirichlet problem {\rm (D1)} 
have the following properties:

\smallskip

{\rm 1)} $
0<\lambda_1 < \zeta_1 \le \ldots \le \lambda_n\le\zeta_n < \lambda_{n+1}$;

{\rm 2)} $\zeta_{k-1}=\lambda_k$ if and only if $\lambda_k=\zeta_k$ \ $(k=2,3,\dots,n)$;

{\rm 3)} the multiplicity of $\lambda_k$ does not exceed $q-1$.

\smallskip

\noindent
If $M=0$, then the above continues to hold for the eigenvalues 
$\{\lambda_k\}_{k=-n,\,k\neq 0}^n$, $\lambda_{-k}=-\lambda_k$,  of the Neumann problem {\rm (N1)} with the modified condition

\smallskip

{\rm 1')} $
0<\lambda_1 < \zeta_1 \le \ldots \le \lambda_n\le\zeta_n$.

\end{theorem}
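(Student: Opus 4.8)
The plan is to deduce Theorem~\ref{thm:2.2} from the $S_0$-property of $\phi_q(z)=\phi_{D,q}(z)/\phi_{N,q}(z)$ established in Theorem~\ref{thm:2.1}, combined with the structural information in Lemma~\ref{lem:2.1a}. First I would work on the variable $z=\lambda^2$, so that the eigenvalues $\lambda_k^2$ of the Neumann problem (counted with multiplicity, for $k=1,\dots,n+1$) are the poles of $\phi_q$ and the eigenvalues $\zeta_k^2$ of the Dirichlet problem (for $k=1,\dots,n$) are the zeros of $\phi_q$ \emph{after cancellation}. The point is that before cancellation $\phi_{D,q}$ has degree $2n$ and $\phi_{N,q}$ has degree $2(n+1)$ (because of the $-Mz\,\phi_{D,q}$ term in \eqref{2.13} when $M>0$), so there are $n+1$ poles and $n$ zeros of the reduced function; by Lemma~\ref{lem:2.1a}~ii)--iv) (the case $a_0=\lim_{z\to\infty}\phi_q(z)=0$, since $\deg\phi_{N,q}>\deg\phi_{D,q}$) these are all simple and interlace strictly as
\[
0<\alpha_1<\beta_1<\alpha_2<\dots<\beta_n<\alpha_{n+1},
\]
where $\alpha_k=\lambda_k^2$ are the \emph{distinct} Neumann eigenvalues and $\beta_k=\zeta_k^2$ the \emph{distinct} Dirichlet eigenvalues. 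Undoing the cancellation then turns strict interlacing of distinct values into the non-strict chain of 1), the equalities being exactly the cancelled common factors.

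Next I would make the bookkeeping between ``distinct eigenvalues'' and ``eigenvalues listed with multiplicity'' precise, which is where claims 1) and 2) really come from. A value $\lambda^2$ is a common zero of $\phi_{D,q}$ and $\phi_{N,q}$ precisely when it is cancelled; using the product form \eqref{2.12} of $\phi_{D,q}$ together with \eqref{2.11}, one sees that if $\lambda^2$ is a zero of $R_{2n_j}^{(j)}$ for exactly $r$ indices $j$ (necessarily $r\le q$, and each such zero is simple by Remark~\ref{rem:nev}), then $\lambda^2$ is a Dirichlet eigenvalue of multiplicity $p_D=r$, and from \eqref{2.11} the multiplicity of $\lambda^2$ as a zero of $\phi_{N,q}$ is $p_N=r-1$ (the surviving term in the sum being the one over the $j$'s where $R_{2n_j}^{(j)}(\lambda^2)\neq 0$, of which there are $q-r$; if $r=q$ the sum contributes $-M\lambda^2\prod_j R_{2n_j}^{(j)}$ which has a zero of order $q-1$ there after extracting... one must check $R_{2n_j-1}^{(j)}(\lambda^2)\neq 0$, which holds because $\phi^{(j)}$ is $S_0$ so its numerator and denominator have no common zero). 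Hence $p_D=p_N+1$ whenever $\lambda^2$ is common, which is exactly the content of 2): $\zeta_{k-1}=\lambda_k$ (i.e.\ the value appears once more in the $\zeta$-list than a naive count would give) forces, and is forced by, $\lambda_k=\zeta_k$. And $p_N\le q-1$ since $p_D=r\le q$, giving 3). The $M=0$ case is the same computation with $\phi_{N,q}$ now of degree $2n$, so $\deg\phi_{N,q}=\deg\phi_{D,q}$ forces... no: when $M=0$ the term $-Mz\phi_{D,q}$ vanishes, $\phi_{N,q}$ has degree $2n-\max_j 1$? — I would check carefully that $\deg\phi_{N,q}=2n-1<2n=\deg\phi_{D,q}$ is wrong; rather $\phi_{N,q}=\sum_j R_{2n_j-1}^{(j)}\prod_{k\neq j}R_{2n_k}^{(k)}$ has degree $2n-1$, so now $a_0=\lim_{z\to\infty}\phi_q(z)\ne 0$? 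This is the delicate point; in fact $\phi_q = \big(\sum_j 1/\phi^{(j)}\big)^{-1}$ tends to $0$ since each $\phi^{(j)}\to l_{n_j}^{(j)}>0$, so $a_0=0$, $\phi_q$ has $n$ poles and $n-1$ zeros, giving $0<\lambda_1<\zeta_1\le\dots\le\lambda_n$ with no $\zeta_n<\lambda_{n+1}$ term — precisely 1').

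The main obstacle I anticipate is the careful degree/leading-coefficient analysis that pins down the exact number of poles and zeros of the reduced $\phi_q$ in both the $M>0$ and $M=0$ cases, and the simultaneous verification that no spurious cancellation occurs at $z=0$ (handled by Theorem~\ref{thm:2.1}: $\phi_q(z)>0$ on $(-\infty,0]$, so $0$ is neither a pole nor a zero) and that the common zeros of $\phi_{D,q}$ and $\phi_{N,q}$ are \emph{exactly} the multiple Dirichlet eigenvalues with the multiplicity drop by one. Concretely, the identity I would lean on is, at a value $\lambda^2$ with $R_{2n_j}^{(j)}(\lambda^2)=0$ for $j$ in a set $J$ of size $r$ and $\neq 0$ otherwise,
\[
\phi_{N,q}(\lambda^2)=\sum_{j\notin J} R_{2n_j-1}^{(j)}(\lambda^2)\!\!\prod_{k\neq j}\!R_{2n_k}^{(k)}(\lambda^2)
 - \tfrac{M}{q}\lambda^2\!\!\sum_{j=1}^q R_{2n_j}^{(j)}(\lambda^2)\!\!\prod_{k\neq j}\!R_{2n_k}^{(k)}(\lambda^2),
\]
from which one reads off that $(\lambda^2-\,\cdot\,)^{\,r-1}$ but not $(\lambda^2-\,\cdot\,)^{\,r}$ divides $\phi_{N,q}$ (using that for $r\ge 2$ every summand with $j\notin J$ still carries at least $r$ Dirichlet factors, so those vanish, while for $|J\cap(\{k\}^c)|=r$... ) — the sign/non-vanishing of the single surviving coefficient is guaranteed by the $S_0$-structure of the individual $\phi^{(j)}$ and of $\phi_q$. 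Once this is in place, claims 1), 1'), 2), 3) all follow by translating back from $z=\lambda^2$ to $\lambda>0$ and reading off multiplicities, and the proof is complete.
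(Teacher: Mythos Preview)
Your overall strategy is sound and close to the paper's own: use Theorem~\ref{thm:2.1} to know that $\phi_q$ becomes an $S_0$-function after cancellation, invoke Lemma~\ref{lem:2.1a} for strict interlacing of the reduced poles and zeros, then undo the cancellation by controlling the common factors. The paper establishes 2) by a slightly weaker computation (showing only that $\lambda_{k_0}=\zeta_{k_0}$ forces a \emph{second} edge to share that Dirichlet eigenvalue, see \eqref{zeros}), and then deduces 3) from 2) together with the obvious bound $p_D\le q$; your route of proving the full relation $p_N=p_D-1$ in one stroke is a legitimate, slightly sharper variant.

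There are, however, two concrete gaps. First, the claim $p_N=r-1$ needs a non-vanishing argument: you must show that the order-$(r-1)$ part of $\phi_{N,q}$ at a Dirichlet eigenvalue $z_0$ does not cancel. The clean way is to divide \eqref{2.13} by $\phi_{D,q}$ and observe that each $1/\phi^{(j)}$ with $j\in J$ has a \emph{simple} pole at $z_0$ with residue $1/(\phi^{(j)})'(z_0)>0$ (Lemma~\ref{lem:2.1a}~iv)), so these positive residues add and the pole of $\phi_{N,q}/\phi_{D,q}$ at $z_0$ is genuinely simple; your displayed formula in the last paragraph is not organized to extract this, and the parenthetical about $r=q$ is garbled. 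Second, your $M=0$ analysis contains an error: from $\phi^{(j)}(z)\to l_{n_j}^{(j)}>0$ one gets $\sum_j 1/\phi^{(j)}(z)\to \sum_j 1/l_{n_j}^{(j)}$, which is finite and positive, hence $\phi_q(z)\to\big(\sum_j 1/l_{n_j}^{(j)}\big)^{-1}>0$, i.e.\ $a_0>0$, not $0$. By Lemma~\ref{lem:2.1a}~ii) the reduced $\phi_q$ then has \emph{equal} numbers of poles and zeros, which gives the correct chain $0<\lambda_1<\zeta_1\le\dots\le\lambda_n\le\zeta_n$ of 1'), not the truncated chain you wrote. (The paper itself does not redo this case but cites \cite{BP2}.)
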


\pagebreak 

\begin{proof} Suppose that $M>0$.

1) It was shown above that the sets $\{\lambda_k\}_{k=-(n+1),\,k\neq 0}^{n+1}$ and $\{0\} \cup \{\zeta_k\}_{k=-n, k\neq 0}^n$ 
are the poles and zeros, respectively, of the rational function
\[
\widetilde \phi_q(\lambda^2) = \lambda \phi_q(\lambda^2)
=\lambda\frac{\phi_{D,q}(\lambda^2)}{\phi_{N,q}(\lambda^2)}
\]
with $\phi_{D,q}$ and $\phi_{N,q}$ given by (\ref{2.12}), (\ref{2.11}).
By \cite[Lemma S1.5.1~(2)]{KK1} and Theorem~\ref{thm:2.1}, 
$\widetilde\phi$ becomes a Nevanlinna function after cancellation of common factors (if any) in the numerator and the denominator. 
Hence, after this cancellation, $\widetilde\phi$  has only simple poles and zeros which strictly interlace as in Lemma~\ref{lem:2.1a} iii).
This proves 1) except for the strict inequalities therein. 

Since $\lambda_1$ is a zero of the $S_0$-function $\widetilde\phi$, it cannot be $0$ (see Definition 2.1 6)). The strict inequality $\lambda_1 < \zeta_1$ will follow if we prove 2).

2) Suppose that $\lambda_{k_0}=\zeta_{k_0}=\nu_{\kappa_0}^{(j_0)}$ for some $k_0 \in \{1,2,\dots,n\}$ and $\kappa_0 \in \{1,2,\dots,n_{j_0}\}$, $j_0\in\{1,2,\dots,q\}$. Since $\lambda_{k_0}$ is a zero of $\phi_{N,q}(\lambda^2)$ and $\nu_{\kappa_0}^{(j_0)}$ is a zero of the factor $R_{2n_{j_0}}^{(j_0)}(\lambda^2)$ in $\phi_{D,q}(\lambda^2)$ (see (\ref{2.11}) and (\ref{2.12})), we have
\begin{equation} 
\label{zeros}
\begin{aligned}
0
&= \sum_{j=1}^q \left[ \left(R_{2n_{j-1}}^{(j)}(\nu_{\kappa_0}^{(j_0)\,2})-\frac{M}{q}\nu_{\kappa_0}^{(j_0)\,2}R_{2n_j}^{(j)}(\nu_{\kappa_0}^{(j_0)\,2})\right)
\mathop{\prod}\limits_{k=1, \ k\not=j}^{q}R_{2n_k}^{(k)}(\nu_{\kappa_0}^{(j_0)\,2}) \right]\\
&=R_{2n_{j_0}-1}^{(j_0)}(\nu_{\kappa_0}^{(j_0)\,2})\mathop{\prod}\limits_{k=1, \ k\not=j_0}^{q}R_{2n_k}^{(k)}(\nu_{\kappa_0}^{(j_0)\,2}). 
\end{aligned}
\end{equation}
Since their quotient is an $S_0$-function by Remark \ref{rem:nev}, the polynomials $R_{2n_j}^{(j)}(\lambda^2)$ and $R_{2n_j-1}^{(j)}(\lambda^2)$ 
do not have a common zero by Lemma \ref{lem:2.1a} iii) (see also \cite[p.\,290]{GK}).
Thus $R_{2n_{j_0}}^{(j_0)}\!(\nu_{\kappa_0}^{(j_0)\,2})\!=\!0$ implies that
$R_{2n_{j_0}-1}^{(j_0)}\!(\nu_{\kappa_0}^{(j_0)2})\neq 0$. 
Hence by (\ref{zeros}) there exists an $i_0 \in \{1,2,\dots,q\}$, $i_0\ne j_0$, such that
\[
 R_{2n_{i_0}}^{(i_0)}(\nu_{\kappa_0}^{(j_0)\,2})=0,
\] 
and thus $\lambda_{k_0}=\nu_{\kappa_0}^{(j_0)} =\nu_{l_0}^{(i_0)} \in \{\zeta_k\}_{k=-n, k\neq 0}^n$ for some $l_0 \in \{1,2,\dots, n_{i_0}\}$.
Since $\lambda_{k_0}=\nu_{\kappa_0}^{(j_0)}=\zeta_{k_0}$ and we had assumed that $\zeta_k \ge \zeta_{k'}$ for $k>k'>0$, 
it follows that $\lambda_{k_0} = \zeta_{k_0-1}$. The latter implies, in particular, the strict inequalities $\lambda_1 < \zeta_1$ and $\zeta_n < \lambda_{n+1}$ in 1). In the same way, one can show that if $\lambda_{k_0} = \zeta_{k_0-1}$, then $\lambda_{k_0} = \zeta_{k_0}$.

3) The multiplicity of each zero $\zeta_k$ of $\phi_{D,q}(z)=\mathop{\prod}_{j=1}^{q}R_{2n_j}^{(j)}(\lambda^2)$ can not
exceed $q$ because each factor in the product has only simple
zeros by Remark \ref{rem:nev} and Lemma~\ref{lem:2.1a} (see also \cite[Chapter III, \S 2, Theorem 1]{GK}). 
Hence, by 2), the multiplicity of each $\lambda_k$ can be at most $q-1$.

The claim for the case $M=0$ was proved in  \cite[Theorem~2.2]{BP2}.
\end{proof}

\begin{corollary}
\label{cor:2.3}
Out of two neighbouring eigenvalues $\lambda_k < \lambda_{k+1}$ one must be simple.
\end{corollary}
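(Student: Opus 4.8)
The plan is to derive Corollary~\ref{cor:2.3} directly from Theorem~\ref{thm:2.2}~2) and~3), since the obstacle is purely combinatorial: part~2) of the theorem already controls exactly \emph{when} a common eigenvalue of the two problems occurs, and part~3) bounds the multiplicity. Recall first how the multiplicity of $\lambda_{k}$ as an eigenvalue of (N1) is encoded in the enumeration $\lambda_{k}\le\lambda_{k+1}$: the value $\lambda$ appears with multiplicity $p$ in the list precisely when there is a block $\lambda_{k}=\lambda_{k+1}=\dots=\lambda_{k+p-1}=\lambda$ with $\lambda_{k-1}<\lambda$ and $\lambda_{k+p}>\lambda$ (with the obvious modification at the ends). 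So asserting that one of two \emph{neighbouring} eigenvalues $\lambda_{k}<\lambda_{k+1}$ is simple is equivalent to saying: there is no index $k$ with $\lambda_{k-1}<\lambda_{k}=\lambda_{k+1}$ and simultaneously $\lambda_{k+1}=\lambda_{k+2}$ with $\lambda_{k-1}<\lambda_{k}$; more precisely, two consecutive repeated values cannot be adjacent.

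First I would argue by contradiction: suppose $\lambda_{k}<\lambda_{k+1}=\lambda_{k+2}$ and also $\lambda_{k+1}=\dots$ is part of a multiplicity-$\ge 2$ block \emph{and} the preceding value $\lambda_{k}$ is also non-simple, i.e.\ $\lambda_{k-1}=\lambda_{k}$. So we would have $\lambda_{k-1}=\lambda_{k}<\lambda_{k+1}=\lambda_{k+2}$, two strictly separated eigenvalues each of multiplicity at least $2$. By the interlacing in Theorem~\ref{thm:2.2}~1), between $\lambda_{k}$ and $\lambda_{k+1}$ there must lie a Dirichlet eigenvalue $\zeta$ with $\lambda_{k}\le\zeta\le\lambda_{k+1}$; more precisely, the weak interlacing $\lambda_{j}\le\zeta_{j}\le\lambda_{j+1}$ pins down which $\zeta$'s are squeezed. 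The key point is that the strict inequality $\lambda_{k}<\lambda_{k+1}$ forces, via the interlacing pattern, $\zeta_{k}$ to satisfy $\lambda_{k}\le\zeta_{k}<\lambda_{k+1}$; if $\lambda_{k-1}=\lambda_{k}$ then also $\zeta_{k-1}=\lambda_{k}$ (since $\zeta_{k-1}$ is squeezed between $\lambda_{k-1}=\lambda_{k}$ and $\lambda_{k}$). But then Theorem~\ref{thm:2.2}~2) applies to the index $k$: $\zeta_{k-1}=\lambda_{k}$ if and only if $\zeta_{k}=\lambda_{k}$. So we conclude $\zeta_{k}=\lambda_{k}<\lambda_{k+1}$. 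Symmetrically, examining the block $\lambda_{k+1}=\lambda_{k+2}$, the weak interlacing gives $\lambda_{k+1}\le\zeta_{k+1}\le\lambda_{k+2}=\lambda_{k+1}$, so $\zeta_{k+1}=\lambda_{k+1}$, and then Theorem~\ref{thm:2.2}~2) at index $k+1$ gives $\zeta_{k}=\lambda_{k+1}$. Combining, $\lambda_{k}=\zeta_{k}=\lambda_{k+1}$, contradicting $\lambda_{k}<\lambda_{k+1}$.

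The above handles the generic interior situation; I expect the main (minor) obstacle to be a careful bookkeeping of the boundary indices and of the case $M=0$, where the enumeration runs only from $-n$ to $n$ and statement~1') replaces~1). There one must check that the extreme eigenvalues $\lambda_{1}$ and $\lambda_{\pm(n+1)}$ (or $\lambda_{\pm n}$ when $M=0$) are simple: indeed $\lambda_{1}<\zeta_{1}$ strictly by Theorem~\ref{thm:2.2}~1), so $\lambda_{1}$ cannot coincide with $\lambda_{2}$ without forcing a Dirichlet eigenvalue strictly below $\zeta_{1}$ and equal to $\lambda_{1}$, contradicting $\zeta_{1}=\min\{\zeta_{k}\}$; and likewise $\zeta_{n}<\lambda_{n+1}$ forces $\lambda_{n+1}$ simple. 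Once these endpoint cases are disposed of, the interior argument via parts~2) and~3) as above closes everything: any two neighbouring distinct eigenvalues cannot both be repeated, hence at least one of them is simple. Finally, I would remark that part~3) (multiplicity $\le q-1$) is not strictly needed for the statement as worded but is the natural companion fact, and that the whole corollary is really just the observation that the weak interlacing of Theorem~\ref{thm:2.2}, together with the ``iff'' rigidity in part~2), does not allow two consecutive multiplicity blocks to be adjacent.
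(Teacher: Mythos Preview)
Your argument is correct and follows essentially the same route as the paper: assume $\lambda_{k-1}=\lambda_k<\lambda_{k+1}=\lambda_{k+2}$, use the interlacing in Theorem~\ref{thm:2.2}~1) to force $\zeta_{k-1}=\lambda_k$ and $\zeta_{k+1}=\lambda_{k+1}$, then apply the ``iff'' in part~2) at indices $k$ and $k+1$ to obtain $\zeta_k=\lambda_k$ and $\zeta_k=\lambda_{k+1}$, a contradiction. Your boundary discussion is more cautious than necessary (if $\lambda_k<\lambda_{k+1}$ with $k=1$ or $k+1=n+1$, the extremal eigenvalue is automatically simple since there is no further index on that side), and, as you note, part~3) is not actually used.
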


\begin{proof}
Otherwise, if both have multiplicity greater than $1$, we have
$ \lambda_{k-1} = \lambda_k < \lambda_{k+1} = \lambda_{k+2}$. Then, by Theorem~\ref{thm:2.2} 1), we have
$\lambda_{k-1} = \zeta_{k-1} = \lambda_k$ and $\lambda_{k+1} = \zeta_{k+1} = \lambda_{k+2}$.
Now  Theorem~\ref{thm:2.2} 2) yields that 
\[
 \zeta_{k-2} = \lambda_{k-1} = \zeta_{k-1} = \lambda_k = \zeta_k, \quad 
 \zeta_k = \lambda_{k+1}  = \zeta_{k+1} = \lambda_{k+2} = \zeta_{k+2}.  
\]
and hence the contradiction $\lambda_k = \lambda_{k+1}$. 
\end{proof}

%
%
%
%
%

\begin{remark}
\label{rem:2.6}
If we modify the Neumann and Dirichlet problem {\rm (N1)} and {\rm (D1)} by imposing a Neumann condition instead of a Dirichlet boundary condition at one pendant vertex  and call the modified problems {\rm (N1')} and {\rm (D1')}, then Theorem~{\rm\ref{thm:2.2}} continues to hold for the eigenvalues $\lambda_k'$ of {\rm (N1')} and $\zeta_k'$ of {\rm (D1')}. In particular, the multiplicity of every eigenvalue $\lambda_k'$ does not exceed $q-1$.
\end{remark}

\begin{proof}
Without loss of generality, let the pendant vertex of edge number $q$ be subject to a Neumann condition. Then the proof of Theorem~\ref{thm:2.2} carries over literally, with the only change that in the recurrence relations (\ref{2.8})--(\ref{2.10}) for the polynomials $R_{2k-1}^{(q)}(\lambda^2)$, $R_{2k}^{(q)}(\lambda^2)$ the condition $R_{-1}^{(q)}(\lambda^2)=\frac{1}{l_0^{(q)}}$ in (\ref{2.10}) has to be replaced by $R_{-1}^{(q)}(\lambda^2)=0$ (which corresponds to setting $l_0^{(q)}=\infty$). 
\end{proof}

We conclude this subsection by considering the eigenvalues $\lambda_k$ of the Neumann problem (N1) as functions of the mass $M$ located at the central vertex (compare \cite[Appendix II.8]{GK}); note that we have a different sign in the recurrence relations (\ref{2.8}) for the first term on the right hand side). 

\begin{proposition}
The eigenvalues  $\{\lambda_k\}_{k=-(n+1),\,k\neq 0}^{n+1}$, $\lambda_{-k}=-\lambda_k$, of the Neumann problem {\rm (N1)}  
have the following monotonicity properties:

\smallskip

{\rm a)} $\lambda_k$ is a monotonically decreasing function of $M \in [0,\infty)$ for $k=1,\dots,n+1$,

{\rm b)} $\lambda_k \to \zeta_{k-1}$ $(k=2,3,\dots,n+1)$ and $\lambda_1 \to 0$ if $M\to \infty$,

\smallskip

\noindent
where we have set $\lambda_{n+1}:= \infty$ if $M=0$.
\end{proposition}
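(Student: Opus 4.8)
The plan is to reduce the Neumann problem to a scalar equation and then exploit the strict monotonicity of Nevanlinna functions between their poles. Put $G(z):=\sum_{j=1}^q 1/\phi^{(j)}(z)=\sum_{j=1}^q R_{2n_j-1}^{(j)}(z)/R_{2n_j}^{(j)}(z)$ and $H_M(z):=Mz-G(z)$, so that by \eqref{2.13} one has $\phi_{N,q}(z)=-H_M(z)\,\phi_{D,q}(z)$ and, by \eqref{vienna18}, $H_M(z)=-\phi_q(z)^{-1}$ after cancellation; in particular the squared Neumann eigenvalues $\lambda_k^2$ are the zeros of $\phi_{N,q}$. The first step is to record the structure of $G$ and $H_M$: since each $\phi^{(j)}$ is a non-constant $S_0$-function (Remark~\ref{rem:nev}) with only simple, positive poles and zeros and with $R_{2n_j}^{(j)}$, $R_{2n_j-1}^{(j)}$ having no common zero (Lemma~\ref{lem:2.1a}~iii)), each $-1/\phi^{(j)}$ is a Nevanlinna function, hence so are $-G$ and $H_M$. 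The poles of $H_M$ are precisely the \emph{distinct} positive Dirichlet eigenvalues $\eta_1<\dots<\eta_r$: they are simple, have positive non-cancelling residues (because $(\phi^{(j)})'>0$ at each zero of $\phi^{(j)}$, by Lemma~\ref{lem:2.1a}~iv)), and, crucially, do not depend on $M$; moreover $0$ is not a pole, $H_M(0)=-G(0)<0$, and $H_M(z)\to+\infty$ as $z\to+\infty$ when $M>0$. Comparing orders of vanishing in $\phi_{N,q}=-H_M\,\phi_{D,q}$ at each $\eta_i$, where $H_M$ has a simple pole and $\phi_{D,q}$ a zero of order $m_i$ equal to the Dirichlet multiplicity of $\eta_i$, shows that $\phi_{N,q}$ vanishes to order exactly $m_i-1$ there, independently of $M$; so $\eta_i$ is a Neumann eigenvalue precisely when $m_i\ge 2$, with $M$-independent multiplicity $m_i-1$. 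Every other Neumann eigenvalue is a simple zero of $H_M$, and since $H_M$ is strictly increasing on each pole-free interval there is exactly one such zero $\theta_0\in(0,\eta_1)$, one $\theta_i\in(\eta_i,\eta_{i+1})$ for $i=1,\dots,r-1$, and---only if $M>0$---one $\theta_r\in(\eta_r,\infty)$; a count then shows that these, together with the stationary multiplicities $m_i-1$, exhaust the $n+1$ (resp.\ $n$ if $M=0$) positive Neumann eigenvalues.

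For part a), the Neumann eigenvalues that coincide with some $\eta_i$ are $M$-independent, hence trivially monotone. For the remaining ones, if $\theta=\theta_i(M)$, then $M\theta=G(\theta)$, whence for $M'>M$ we get $H_{M'}(\theta)=(M'-M)\theta>0$; since $H_{M'}$ is strictly increasing on the pole-free interval containing $\theta_i$ and has a unique zero there, that zero $\theta_i(M')$ lies strictly to the left of $\theta$, i.e.\ $\theta_i$ is strictly decreasing in $M$. As the coefficients of $\phi_{N,q}$ depend polynomially on $M$, the ordered eigenvalues $\lambda_k$ depend continuously on $M$, and the strict nesting $\theta_{i-1}(M)<\eta_i<\theta_i(M)$, valid for every $M$, forces each label $\lambda_k$ to stay on one branch---either $\lambda_k\equiv\eta_i$ for a fixed $i$, or $\lambda_k\equiv\theta_i(\cdot)$ for a fixed $i$. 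In either case $\lambda_k$ is a monotonically decreasing function of $M$.

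For part b), on $(0,\eta_1)$ the function $G$ is decreasing, so $M\theta_0=G(\theta_0)\le G(0)$ and therefore $\theta_0\le G(0)/M\to 0$, i.e.\ $\lambda_1\to 0$. For $i\ge 1$, $\theta_i(M)$ is decreasing in $M$ and bounded below by $\eta_i>0$, hence converges to some $L_i\ge\eta_i$; but $G(\theta_i)=M\theta_i\to\infty$ forces $\theta_i$ towards the pole of $G$, so $L_i=\eta_i$ (for $i=r$ one also uses that, for $M$ bounded away from $0$, $\theta_r$ stays bounded above). Consequently each $\lambda_k^2$ with $k\ge 2$ tends, along its fixed branch, to some $\eta_i$, and since $\theta_i\to\eta_i$ contributes, together with the stationary multiplicity $m_i-1$, exactly $m_i$ copies of $\eta_i$, the limit multiset $\{\lim_{M\to\infty}\lambda_k^2:k=2,\dots,n+1\}$ coincides, with multiplicities, with the Dirichlet spectrum $\{\zeta_j^2:j=1,\dots,n\}$. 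As both are listed in increasing order (consistently with $\lambda_k^2\ge\zeta_{k-1}^2$ from Theorem~\ref{thm:2.2}~1)), this yields $\lambda_k^2\to\zeta_{k-1}^2$, i.e.\ $\lambda_k\to\zeta_{k-1}$.

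The main obstacle is the bookkeeping caused by the non-strict interlacing: one has to verify that the ``stationary'' eigenvalues $\eta_i$ and their Neumann multiplicities $m_i-1$ are genuinely independent of $M$---which is exactly what the residue analysis of $G$ (simple poles, positive non-cancelling residues) supplies---and that the ordered labels $\lambda_k$ never migrate between the stationary branches and the moving branches $\theta_i$. Once this is in place, both monotonicity statements follow from the elementary behaviour of the Nevanlinna function $H_M$ near its $M$-independent poles.
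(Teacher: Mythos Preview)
Your argument is correct and, for part a), follows the same core idea as the paper: both exploit the identity $H_{M'}(z)=H_M(z)+(M'-M)z$ for the Nevanlinna function $H_M=-\phi_{N,q}/\phi_{D,q}$ and use that such a function is strictly increasing between its ($M$-independent) poles to push the zeros to the left. The paper's proof, however, is considerably terser: it records only this one-line shift argument and concludes $\lambda_k(M')\le\lambda_k(M)$ without discussing the distinction between ``stationary'' eigenvalues (those equal to a multiple Dirichlet value $\eta_i$, with $M$-independent multiplicity $m_i-1$) and ``moving'' eigenvalues (the simple zeros $\theta_i$ of $H_M$), nor the issue of whether ordered labels can migrate between these two families. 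Your residue analysis and the branch-tracking via the strict nesting $\theta_{i-1}<\eta_i<\theta_i$ make this bookkeeping explicit and thereby justify the non-strict monotonicity statement more carefully.

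More notably, the paper does \emph{not} prove part~b) at all; its proof stops after part~a). Your argument for b)---bounding $\theta_0\le G(0)/M$ on $(0,\eta_1)$ and, for $i\ge 1$, combining the monotone convergence $\theta_i\downarrow L_i\ge\eta_i$ with $G(\theta_i)=M\theta_i\to\infty$ to force $L_i=\eta_i$, then matching multiplicities against the Dirichlet spectrum---is a genuine addition. The multiplicity matching step (each $\eta_i$ receives $m_i-1$ stationary copies plus one moving copy $\theta_i\to\eta_i$, totalling $m_i$) is exactly what is needed to upgrade the set-wise convergence to the ordered statement $\lambda_k\to\zeta_{k-1}$, and your appeal to the ordering from Theorem~\ref{thm:2.2}~1) makes this clean.
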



\begin{proof}
For the purpose of this proof, we write $\lambda_k(M)$ and $\phi_{N,q}(\,\cdot\,;M)$ to indicate the dependence on $M \in[0,\infty)$.
From (\ref{2.13}) we conclude that for $M$, $M' \in [0,\infty)$, $M<M'$, we have
$$
- \frac{\phi_{N,q}(\lambda^2;M')}{\phi_{D,q}(\lambda^2)} = - \frac{\phi_{N,q}(\lambda^2;M)}{\phi_{D,q}(\lambda^2)} + (M'-M)\lambda^2.
$$
Since $(M'-M)\lambda^2>0$ and the rational function $- \frac{\phi_{N,q}(\lambda^2;M)}{\phi_{D,q}(\lambda^2)}$ is a Nevanlinna function and hence increasing between its poles, the zeros $\lambda_n(M')$ of the left hand side must lie to the left of the zeros $\lambda_n(M)$ of $- \frac{\phi_{N,q}(\lambda^2;M)}{\phi_{D,q}(\lambda^2)}$, i.e.\ $\lambda_n(M') \le \lambda_n(M)$.
\end{proof}

\subsection{Inverse spectral problem for the star graph with root at the centre}
\label{subsec:2.2}

In this subsection we investigate the inverse problem of recovering the  
distribution of masses on the star graph from the two spectra of the Neumann problem (N1) and the Dirichlet problem (D1)
together with the lengths $l_j$ of the separate strings.

More precisely, suppose that $q \in \N$ \ ($q\ge 2$), is fixed and a set of lengths $l_j>0$ \ ($j=1,2,\dots, q)$ as well as sequences $\{\lambda_k\}_{k=-n-1,k\neq 0}^{n+1}$, $\{\zeta_k\}_{k=-n,k\neq 0}^n \subset \R$ having the properties 1)--3) in Theorem~\ref{thm:2.2} are given. 
Can we determine numbers $n_j \in \N_0$, sets of masses $\big\{m_k^{(j)}\big\}_{k=1}^{n_j} \cup \{M\}$ and of lengths $\big\{l_k^{(j)}\big\}_{k=0}^{n_j}$ of the intervals between them for  $j=1,2,\dots,q$ so that the corresponding star graph has the sequences $\{\lambda_k\}_{k=-(n+1),k\neq 0}^{n+1}$, $\{\zeta_k\}_{k=-n,k\neq 0}^n$ as Neumann and Dirichlet eigenvalues, respectively?

\begin{theorem}
\label{thm:2.6}
%
Let $q \in \N$, $q\ge 2$, $(l_j)_{j=1}^q \subset (0,\infty)$, $n\in \N$, and suppose that $\{\lambda_k\}_{k=-(n+1),\
k\neq 0}^{n+1}$, $\{\zeta_k\}_{k=-n,\,k\neq 0}^n \subset \R$ are such that

\begin{enumerate}
\item[{\rm 0)}]
$\lambda_{-k}=-\lambda_k,\quad \zeta_{-k}=-\zeta_k$,
\item[{\rm 1)}]
$0<\lambda_1 < \zeta_1 \le \ldots \le \lambda_n\le\zeta_n < \lambda_{n+1}$;
\item[{\rm 2)}]
$\zeta_{k-1}=\lambda_k$ if and only if $\lambda_k=\zeta_k$ $(k=2,3,\dots,n)$;
\item[{\rm 3)}]
the multiplicity of $\lambda_k$ in the sequence $\{\lambda_k\}_{k=-(n+1),k\neq 0}^{n+1}$ does not exceed $q-1$.
\end{enumerate}
\end{theorem}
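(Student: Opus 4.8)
\emph{Proof plan.} The plan is to construct, from the two given sequences and the lengths $l_j$, integers $n_j\ge 0$ with $\sum_j n_j=n$, masses $m_k^{(j)}>0$ and a central mass $M\ge 0$, and interval lengths $l_k^{(j)}>0$ with $\sum_{k=0}^{n_j}l_k^{(j)}=l_j$, so that the corresponding star graph of Stieltjes strings has $\{\lambda_k\}$ and $\{\zeta_k\}$ as Neumann and Dirichlet spectra. The device is the $S_0$-function $\phi_q$ of Theorem~\ref{thm:2.1}: I would reconstruct it from the data, split its representation \eqref{vienna18}--\eqref{ND} into the $q$ edge contributions $\phi^{(j)}$, and turn each $\phi^{(j)}$ into a Stieltjes string via the continued fraction algorithm of Lemma~\ref{lem:2.1a}~i) and Remark~\ref{rem:nev}. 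Concretely, I would first set
\[
 \phi_q(z) := C\,\frac{\prod_{k=1}^{n}(z-\zeta_k^2)}{\prod_{k=1}^{n+1}(z-\lambda_k^2)},
\]
with $C$ fixed by the normalisation $\phi_q(0)=\bigl(\sum_{j=1}^q 1/l_j\bigr)^{-1}$, the value forced by \eqref{vienna18} since $\phi^{(j)}(0)=l_j$. The first, and main, step is to show that after cancellation of common factors $\phi_q$ is a rational $S_0$-function. This is the converse of Theorem~\ref{thm:2.2}: from 1) and 2) one sees that a value occurring in both sequences has Dirichlet multiplicity exactly one more than its Neumann multiplicity (two equal $\lambda$'s squeeze a $\zeta$ between them, and $\lambda_k=\zeta_k$ forces $\lambda_k=\zeta_{k-1}$), so in particular all Neumann and all Dirichlet values of multiplicity $\ge 2$ are shared; hence after cancellation the remaining poles and zeros are all simple, positive, strictly interlacing with a pole first, and $\lim_{z\to\infty}\phi_q(z)=0$. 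The converse assertion of Lemma~\ref{lem:2.1a} then yields that $\phi_q$ is an $S_0$-function.

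Next I would read the central mass off \eqref{ND} as $M:=-\lim_{z\to\infty}\bigl(z\,\phi_q(z)\bigr)^{-1}\ge 0$ and put $h(z):=\phi_q(z)^{-1}+Mz$; by construction $h$ is a rational function that must be realised as $\sum_{j=1}^q\phi^{(j)}(z)^{-1}$, and one checks it has only simple poles, at the distinct values $\mu_i$ occurring among $\{\zeta_k^2\}$, with positive residues $r_i$, and $h(0)=\sum_j 1/l_j$, $h(\pm\infty)=h(0)+\sum_i r_i/\mu_i$. Now comes the distribution step, where the non-uniqueness originates: for each distinct Dirichlet value $\mu_i$, of multiplicity $p_i$, choose $p_i$ edges to carry it — possible since $p_i\le q$, which follows from 3) because a $\zeta$ of multiplicity $p_i\ge 2$ forces a $\lambda$ of multiplicity $p_i-1\le q-1$ — and split $r_i$ into $p_i$ positive parts $r_i^{(j)}$. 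With $n_j$ the number of $\mu_i$ assigned to edge $j$ (so $\sum_j n_j=n$) and
\[
 \phi^{(j)}(z)^{-1}:=\frac1{l_{n_j}^{(j)}}+\sum_{i}\frac{r_i^{(j)}}{z-\mu_i}, \qquad \frac1{l_{n_j}^{(j)}}:=\frac1{l_j}+\sum_i\frac{r_i^{(j)}}{\mu_i},
\]
one gets $\sum_j\phi^{(j)}(z)^{-1}\equiv h(z)$ (the identity $\sum_j 1/l_{n_j}^{(j)}=h(\pm\infty)$ being automatic), and each $\phi^{(j)}$ is an $S_0$-function: its zeros are the assigned $\mu_i$, while by monotonicity of $\phi^{(j)}(z)^{-1}$ between its poles the zeros of $\phi^{(j)}(z)^{-1}$, i.e.\ the poles of $\phi^{(j)}$, are simple, positive, strictly interlace the $\mu_i$, lie below the smallest of them and none above the largest since $\phi^{(j)}(z)^{-1}>0$ there; also $\phi^{(j)}(0)=l_j$. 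Expanding each $\phi^{(j)}$ by Lemma~\ref{lem:2.1a}~i) and comparing with \eqref{confrac} produces masses $m_k^{(j)}>0$ and lengths $l_k^{(j)}>0$ with $\sum_{k=0}^{n_j}l_k^{(j)}=\phi^{(j)}(0)=l_j$.

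Finally I would verify that the star graph assembled from these parameters with central mass $M$ has the prescribed spectra: by \eqref{2.12} its Dirichlet spectrum is the union of the edge Dirichlet spectra, hence $\{\zeta_k\}$ with multiplicities by the assignment; and by \eqref{2.13} one has $\phi_{N,q}(z)=\phi_q(z)^{-1}\,\phi_{D,q}(z)$, whose zeros, after the cancellations against the $\zeta_k^2$ in $\phi_{D,q}$, are exactly $\{\lambda_k^2\}$ with multiplicities, so the Neumann spectrum is $\{\lambda_k\}$. I expect the main obstacle to be the first step — turning the multiplicity bookkeeping of Theorem~\ref{thm:2.2} around to obtain the $S_0$-property of $\phi_q$ after cancellation; the distribution step, though notationally heavy, is routine once one notices that the total lengths come out correctly by themselves, and the freedom in assigning Dirichlet values to edges and in splitting the residues is precisely what produces the non-uniqueness. (The variant $M=0$ is handled in the same way, with the denominator of $\phi_q$ replaced by $\prod_{k=1}^{n}(z-\lambda_k^2)$ and the term $Mz$ dropped.)
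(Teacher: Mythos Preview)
Your proposal is correct and follows essentially the same route as the paper: form $\phi_q^{-1}=\Psi_q$ from the data (the paper's \eqref{psiq}), show it is the reciprocal of an $S_0$-function after cancellation using conditions 1) and 2), expand into partial fractions to extract $M=A_0$ and distribute the simple poles with their residues among the $q$ edges (the paper's \eqref{3.1a}), and recover each edge string via the continued fraction expansion \eqref{3.2}. You are somewhat more explicit than the paper about the freedom in splitting a residue when a $\zeta$-value is repeated, which is exactly the source of non-uniqueness the paper exhibits in Example~\ref{EX2}.
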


\noindent
%
{\it  Then there exists a star graph of $q$ Stieltjes strings, i.e.\ sequences $\{n_j\}_{j=1}^q \subset \N_0$ and} $\{m_k^{(j)}\}_{k=1}^{n_j} \cup \{M\}$, $\{l_k^{(j)}\}_{k=0}^{n_j} \subset (0,\infty)$ $(j=1,2,\ldots, q)$ {\it with $n=\sum_{j=1}^q n_j$ and} $\sum_{k=0}^{n_j}l_k^{(j)}=l_j$ 
{\it such that the Neumann problem} (N1) {\it in} (\ref{2.1})--(\ref{2.4}) {\it has the eigenvalues $\{\lambda_k\}_{k=-(n+1),\,k\neq 0}^{n+1}$ and the Dirichlet problem} (D1) {\it in} (\ref{2.5})--(\ref{2.7}) for $j=1,2,\dots, q$ {\it has the eigenvalues $\{\zeta_k\}_{k=-n,\,k\neq 0}^n$.}

{\it For the case $M=0$, the above continues to hold if we replace  $\{\lambda_k\}_{k=-(n+1),\
k\neq 0}^{n+1}$ by a sequence  $\{\lambda_k\}_{k=-n,\ k\neq 0}^{n}$ and/or set $\lambda_{\pm(n+1)} = \pm \infty$.}

\begin{remark}
\label{rem:2.7}
Due to assumption 2), condition 3) is equivalent~to
\begin{enumerate}
\item[{\rm 3')}]
{\it the multiplicity of $\zeta_k$ in the sequence $\{\zeta_k\}_{k=-n,\ k\neq 0}^{n}$ does not exceed $q$}.
\end{enumerate}
\end{remark}

\begin{proof}
Let  $\{n_j\}_{j=1}^q \subset \N_0$ be a sequence such that $n=\sum_{j=1}^q n_j$ 
and $\{\zeta_k\}_{k=-n,\, k\neq 0}^n$ can be written as the union
\begin{equation}
\label{subdiv}
 \{\zeta_k\}_{k=-n,\, k\neq 0}^n=\bigcup_{j=1}^q \big\{\nu_\kappa^{(j)}\big\}_{\kappa=-n_j,\,\kappa\neq 0}^{n_j}.
\end{equation}
with sequences $\{\nu_\kappa^{(j)}\}_{\kappa=-n_j,\,\kappa\ne 0}^{n_j}$ so that $\nu_{-\kappa}^{(j)}=-\nu_\kappa^{(j)}$ and $\nu_\kappa^{(j)}>\nu_{\kappa'}^{(j)}$ for $\kappa>\kappa'>0$; note that the latter is possible because of assumption 3).

In order to prove the claim, we consider the rational function
\begin{equation}
\label{psiq}
  \Psi_{q}(z):=\bigg( \mathop{\sum}\limits_{j=1}^{q}\frac{1}{l_j} \bigg) 
  \frac{\mathop{\prod}\limits_{k=1}^{n+1}\left( 1-\frac{z}{\lambda_k^2}\right)}{\mathop{\prod}\limits_{k=1}^{n}\left(1-\frac{z}{\zeta_k^2}\right)}.
\end{equation}
The function $\Psi_q^{-1}(z)$ is an $S_0$-function by Lemma~\ref{lem:2.1a} since 
its poles and zeros $\{\lambda_k^2\}_{k=1}^{n+1}$ and $\{\zeta_k^2\}_{k=1}^n$, after cancellation of common factors, are all simple according to condition 2), interlace strictly and are ordered as in 1). 

The theorem is proved if we find sequences $\{m_k^{(j)}\}_{k=1}^{n_j} \cup \{M\}$ and $\{l_k^{(j)}\}_{k=0}^{n_j}$ $(j=1,2,\ldots, q)$ with $\sum_{k=0}^{n_j}l_k^{(j)}=l_j$ so that, including multiplicities of zeros and poles, 
\[
 \Psi_q (z) = \frac{\phi_{N,q}(z)}{\phi_{D,q}(z)} =  \phi_q(z)^{-1}
\]  
with the polynomials $\phi_{N,q}(z)$ and $\phi_{D,q}(z)$ constructed from a star graph with masses $\{m_k^{(j)}\}_{k=1}^{n_j} \cup \{M\}$ and lengths  $\{l_k^{(j)}\}_{k=0}^{n_j}$ $(j=1,2,\ldots, q)$ as in (\ref{2.11}), (\ref{2.12}). 

Since $\Psi_q^{-1}(z)$ is a Nevanlinna function, so is $-\Psi_q(z)$.
Thus expansion into partial fractions shows that there are constants $A_0\ge 0$, $A_1,A_2, \dots, A_n>0$, $B \in \R$ so~that 
\begin{equation}
\label{psiq2}
  \Psi_q (z)=-A_0z+\mathop{\sum}\limits_{i=1}^{n}\frac{A_i}{z-\zeta_i^2}+B
\end{equation}
(see e.g.\ \cite[Chapter II.2, p.\ 19/26]{Do} where Nevanlinna functions are called Pick functions).
More precisely, the coefficients $A_i = \mbox{res}(\Psi_q , \zeta_i^2)$ \, ($i=1,2,\dots,n$) and $B$ are \vspace{-2mm} given by
\begin{align}
\label{Ai}
  A_i&= \lim_{z\to \zeta_i^2} \Psi_q (z) (z-\zeta_i^2) 
     \!=\! \bigg( \mathop{\sum}\limits_{j=1}^{q}\frac{1}{l_j} \bigg) \lambda_{n+1}^2 \bigg( \prod_{k=1}^n \frac{\lambda_k^2}{\zeta_k^2} \bigg)
     \lim_{z\to \zeta_i^2} \frac{\, \mathop{\prod}\limits_{k=1}^{n+1} \ (\lambda_k^2-z)}{\!\!\mathop{\prod}\limits_{k=1, k \ne i}^{n}\! (\zeta_k^2-z)} , \\[-2mm]
  B  &\!=\! \mathop{\sum}\limits_{j=1}^{q}\frac{1}{l_j}+\mathop{\sum}\limits_{k=1}^{n}\frac{A_k}{\zeta_k^2}.    \nonumber
\end{align}
%
Using  the sequence $\{n_j\}_{j=1}^q \subset \N_0$ in the decomposition (\ref{subdiv}) of $\{\zeta_k\}_{k=-n,\,k\neq 0}^n$,
we~set 
\begin{align}
\nonumber
A_\kappa^{(j)} &:= A_k \ \mbox{ if } \nu_\kappa^{(j)} = \zeta_k \quad  (j=1,2,\dots q, \, \kappa=1,2,\dots n_j, \, k=1,2,\dots,n),\\
\label{3.1}
B_j&:=\frac{1}{l_j}+\mathop{\sum}\limits_{\kappa=1}^{n_j}\frac{A_\kappa^{(j)}}{\nu_\kappa^{(j)\,2}},
\end{align}
so that we can write
\begin{equation}
\label{3.1a}
  \Psi_q (z)=-A_0z+\mathop{\sum}\limits_{j=1}^{q}\left(\mathop{\sum}\limits_{\kappa=1}^{n_j}\frac{A_\kappa^{(j)}}{z-\nu_\kappa^{(j)\,2}}+B_j\right)
         =: -A_0z+\mathop{\sum}\limits_{j=1}^{q} \Psi_j(z).   
\end{equation}
Since $A_\kappa^{(j)} = A_k >0$, the derivative of the rational function $\Psi^{-1}_j(z)$ is positive for all $z\ne \nu_\kappa^{(j)\,2}$.
Moreover, $\Psi^{-1}_j(z) > 0$ \ ($z\in(-\infty,0]$), and $\lim_{z\to \infty} \Psi^{-1}_j(z) = \frac 1{B_j} >0$.
Hence $\Psi^{-1}_j(z)$ has only simple poles $\mu_\kappa^{(j)\,2} >0 $ strictly interlacing with its simple zeros $\nu_\kappa^{(j)\,2}$ as follows:
$$
  0 < \mu_1^{(j)\,2} < \nu_1^{(j)\,2} < \mu_2^{(j)\,2} < \nu_2^{(j)\,2} <  \dots < \mu_{n_j}^{(j)\,2} < \nu_{n_j}^{(j)\,2}.
$$
Therefore, by Lemma \ref{lem:2.1a}, $\Psi^{-1}_j(z)$ is an $S_0$-function and hence there exist unique sequences of positive numbers $\{l_\kappa^{(j)}\}_{\kappa=0}^{n_j}$ and $\{m_\kappa^{(j)}\}_{\kappa=1}^{n_j}$ such that 
%
\begin{equation}
\label{3.2}
\left(\mathop{\sum}\limits_{\kappa=1}^{n_j}\!\frac{A_\kappa^{(j)}}{z-\nu_\kappa^{(j)2}}+B_j\right)^{\!\!\!\!-1}\hspace{-3mm}=\!
l_{n_j}^{(j)}+\frac{1}{-m_{n_j}^{(j)}z\!+\!\frac{1}{l_{n_j-1}^{(j)}\!+\!\frac{1}{-m_{n_j-1}^{(j)}z+
\ldots
+\frac{1}{l_1^{(j)}\!+\!\frac{1}{-m_1^{(j)}z+\frac{1}{l_0^{(j)}}}}}}}.
\end{equation}
From (\ref{3.1}) and from (\ref{3.2}) with $z=0$ we see that
$$
  l_j=\left(-\mathop{\sum}\limits_{k=1}^{n_j}\frac{A_k^{(j)}}{\nu_k^{(j)2}}+B_j\right)^{\!\!\!-1}
     =l_{n_j}^{(j)}+l_{n_j-1}^{(j)}+\ldots+l_{1}^{(j)}+l_{0}^{(j)}.
$$
Hence the above sequences of masses and intervals between them 
yield a star graph~of Stieltjes strings with $q$ edges of lengths $l_j$ ($j=1,2,\dots,q$).
For this star graph we find, comparing with~\eqref{phij}, (\ref{confrac}),
$$
  \psi_j^{-1}(z)=
  \left(\mathop{\sum}\limits_{\kappa=1}^{n_j}\!\frac{A_\kappa^{(j)}}{z-\nu_\kappa^{(j)2}}+B_j\right)^{-1} = 
   \frac{\phi_D^{(j)}(z)}{\phi_N^{(j)}(z)} = \phi^{(j)}(z)
  \quad (j=1,2,\dots,q). 
$$ 
If we set $M:=A_0>0$ and observe (\ref{vienna18}), we arrive at the desired relation 
\begin{equation}
\label{vienna13}
  \Psi_q (z) =  -Mz+\mathop{\sum}\limits_{j=1}^{q} \frac 1{\phi^{(j)}(z)} 
  =  \phi_q^{-1}(z).
\end{equation}
In order to show the equality of the multiplicities of all poles and zeros, it is sufficient to consider e.g.\ the poles. 
Taking the inverse in \eqref{3.2} and using the uniqueness of the expansions therein, we find that, for every $j=1,2,\dots,q$, the set $\{\nu_\kappa^{(j)\, 2} \}_{\kappa=1}^{n_j}$ must be the set of poles of the inverse on the right hand side, i.e.\ the set of zeros of  $\phi_D^{(j)}(z)$. Hence we obtain that
$\bigcup_{j=1}^q \{\nu_\kappa^{(j)\, 2} \}_{\kappa=1}^{n_j} = \{\zeta_k^2\}_{k=1}^n$ coincides with the set of zeros of $\phi_{D,q}(z) =\prod_{j=1}^q \phi_D^{(j)}(z)$ including multiplicities.
\end{proof}

\begin{remark}
In the case $M=0$ and under the additional assumptions that the sequences $\{\lambda_k\}_{k=-n,\ k\neq 0}^{n}$ and
$\{\zeta_k\}_{k=-n,\ k\neq 0}^{n}$ \emph{strictly} interlace and the distribution \eqref{subdiv} of the latter eigenvalues onto the $q$ strings is prescribed, it was proved in \cite[Theorem~3.1]{BP2} that the sequences  $\{m_k^{(j)}\}_{k=1}^{n_j}$, $\{l_k^{(j)}\}_{k=0}^{n_j}$ $(j=1,2,\ldots, q)$ are unique.
\end{remark}
%
%
%
%
%
%

\section{Star graph with root at a pendant vertex}

In this section, we consider a plane star graph of $q \ (\ge 2)$ Stieltjes strings 
joined at the central vertex where a mass $M\ge 0$ is placed with the pendant vertices fixed except for one called root and denoted by ${\bf v}$.
We suppose that this web is stretched and study its small transverse vibrations with  Kirchhoff and continuity conditions at the central vertex in two different cases: 
\begin{enumerate}
\item[(N2)]  the pendant vertex ${\bf v}$ is free to move in the direction orthogonal to the
equilibrium position of the strings (Neumann problem), \vspace{1mm}
\item[(D2)] the pendant vertex ${\bf v}$  is fixed (Dirichlet problem).
\end{enumerate}
We investigate the eigenfrequencies of both problems and their relations to each other in order to be able to establish necessary and sufficient conditions for the solution of the corresponding inverse problem. 

In the sequel, the string incident to the root is called the main edge or string and 
we label the other edges of the star graph by $j=1,2,\dots,q-1$ $(q\ge 2)$, assuming that each edge is a Stieltjes string.
We assume that the main edge consists of ${\bf n}+1$ (${\bf n} \in \N_0$) intervals of lengths 
${\bf l}_k$ ($k=0,1,\dots,{\bf n}$) with point masses ${\bf m}_k$ $(k=1,2,\dots,{\bf n})$ 
separating them  (both counted from the exterior towards the centre); the length of the main edge is denoted by ${\bf l} := \mathop{\sum}\limits_{k=0}^{{\bf n}}{\bf l}_k$.
For the other $q-1$ edges, we use the same notation as in Section~2, i.e.\ the 
$j$-th edge consists of $n_j+1$ ($n_j\in \N_0$) intervals of length $l_k^{(j)}$ ($k=0,1,\ldots , n_j$) with
point masses $m_k^{(j)}$ ($k=1,2,\ldots , n_j$) separating them (both counted from the exterior towards the centre);
the length of the $j$-th string is denoted by \vspace{-2mm} $l_j := \sum_{k=0}^{n_j}l_k^{(j)}$.

\begin{center}
\begin{figure}[h]
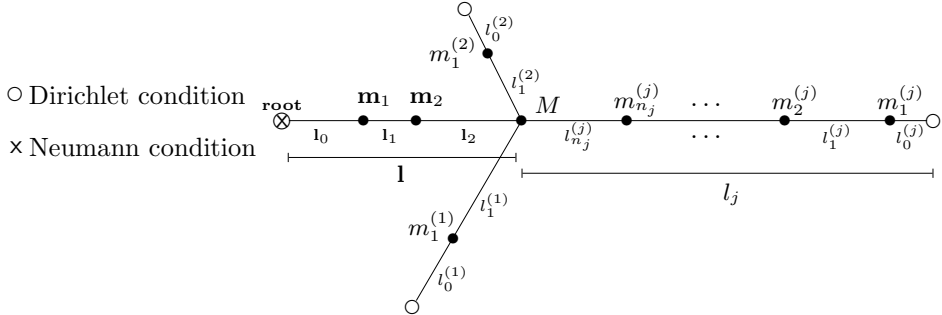

\psset{unit=7mm}\pspicture*(-10,-3.7)(8.7,2.3)
\rput[C](-9.5,0.5){\circle{0.25}}
\rput[L](-7.3,0.5){Dirichlet condition}
\rput[C](-9.6,-0.5){{\sf x}}
\rput[L](-7.2,-0.5){Neumann condition}
\rput[C](-4.55,0.3){{\bf \tiny root}}
\rput[C](0.5,0.3){\small $M$}
\psline[linewidth=.3pt,linecolor=black]{-}(-4.42,0)(0,0)
\psline[linewidth=.3pt,linecolor=black]{-}(0,0)(-1,2)
\psline[linewidth=.3pt,linecolor=black]{-}(0,0)(-2,-3.42)
\psline[linewidth=.3pt,linecolor=black]{-}(0,0)(7.7,0)
\psline[linewidth=.3pt,linecolor=black]{|-|}(0,-1)(7.82,-1)
\psline[linewidth=.3pt,linecolor=black]{|-|}(-4.44,-0.7)(-0.1,-0.7)
\rput[C](-4.55,0){{\sf x}}
\rput[C](-4.4,0){\circle{0.28}}
\rput(-0.95,2.12){\circle{0.25}}
\rput(-1.95,-3.54){\circle{0.25}}
\rput(7.94,0){\circle{0.25}}
\rput[C](7,0){$\bullet$}
\rput[C](7.4,-0.3){{\tiny $l_0^{(j)}$}}
\rput[C](7.2,0.4){{\small $m_1^{(j)}$}}
\rput[C](5,0){$\bullet$}
\rput[C](6,-0.3){{\tiny $l_1^{(j)}$}}
\rput[C](5.2,0.4){{\small $m_2^{(j)}$}}
\rput[C](3.5,0.3){$\ldots$}
\rput[C](3.5,-0.3){$\ldots$}
\rput[C](2,0){$\bullet$}
\rput[C](2.2,0.4){{\small $m_{n_j}^{(j)}$}}
\rput[C](1.1,-0.3){{\tiny $l_{n_j}^{(j)}$}}
\rput[C](4,-1.4){\small $l_j$}
\rput[C](0,0){$\bullet$}
\rput[C](-3,0){$\bullet$}
\rput[C](-2.8,0.4){{\small ${\bf m}_1$}}
\rput[C](-3.8,-0.3){{\tiny ${\bf l}_0$}}
\rput[C](-2,0){$\bullet$}
\rput[C](-1.8,0.4){{\small ${\bf m}_2$}}
\rput[C](-2.5,-0.3){{\tiny ${\bf l}_1$}}
\rput[C](-1,-0.3){{\tiny ${\bf l}_2$}}
\rput[C](-2.3,-1){\small ${\bf l}$}
\rput[C](-1.3,-2.25){$\bullet$}
\rput[C](-1.7,-2){{\small$m_1^{(1)}$}}
\rput[C](-0.5,-1.6){{\tiny $l_1^{(1)}$}}
\rput[C](-1.3,-3){{\tiny $l_0^{(1)}$}}
\rput[C](-0.64,1.27){$\bullet$}
\rput[C](-1.3,1.3){{\small$m_1^{(2)}$}}
\rput[C](0.1,0.75){{\tiny $l_1^{(2)}$}}
\rput[C](-0.4,1.75){{\tiny $l_0^{(2)}$}}
\endpspicture
\vspace{-4mm}
\caption{{\small Star graph with root at a pendant vertex}}
\end{figure}
\end{center}

By ${\bf v}_k(t)$ ($k=1,2,\ldots ,{\bf n}$) we denote the transverse displacement
of the $k$-th point mass ${\bf m}_k$ (counted from the exterior) on the main edge at time $t$,
and by ${\bf v}_0(t)$, ${\bf v}_{{\bf n}+1}(t)$ those of the ends of the main string.
For the other~$q\!-\!1$ edges, by $v_k^{(j)}(t)$ ($k=1,2,\ldots ,n_j$) we denote 
the transverse displacement of the $k$-th point mass $m_k^{(j)}$ (counted from the exterior) on the $j$-th edge at time $t$,
and by $v_0^{(j)}(t)$, $v_{n_j+1}^{(j)}(t)$ those of the ends of the $j$-th string ($j=1,2,\ldots ,q-1$).

If we assume the threads to be stretched by forces each equal to 1, the Lagrange equations for the 
small transverse vibrations of the net (compare  \cite[Chapter~III.1]{GK})
together with separation of variables 
${\bf v}_k(t)={\bf u}_k {\rm e}^{{\rm i}\lambda t}$, $v_k^{(j)}(t)=u_k^{(j)}{\rm e}^{{\rm i}\lambda t}$ 
yields the fol\-low\-ing difference equations for the amplitudes ${\bf u}_k$ and \vspace{-2mm}$u_k^{(j)}$:
\begin{align}
\label{4.1}
&\frac{{\bf u}_k-{\bf u}_{k+1}}{l_k}+\frac{{\bf u}_k-{\bf u}_{k-1}}{{\bf l}_{k-1}}-{\bf m}_k\lambda^2{\bf u}_k=0 \quad
(k=1,2,\ldots ,{\bf n}),  \\
\label{4.2}
&\frac{u_k^{(j)}-u_{k+1}^{(j)}}{l_k^{(j)}}+\frac{u_k^{(j)}-u_{k-1}^{(j)}}{l_{k-1}^{(j)}}-m_k^{(j)}\lambda^2u_k^{(j)}=0
\quad  \parbox[t]{3.5cm}{$(k=1,2,\ldots ,n_j, \\ \hspace*{1.7mm} j=1,2,\ldots ,q-1),$}
\\
\label{4.3}
&{\bf u}_{{\bf n}+1}=u_{n_1+1}^{(1)}=u_{n_2+1}^{(2)}=\ldots =u_{n_{q-1}+1}^{(q-1)},
\\
\label{4.4}
&\frac{{\bf u}_{{\bf n}+1}-{\bf u}_{{\bf n}}}{{\bf l}_{{\bf n}}}+
\mathop{\sum}\limits_{j=1}^{q-1}\frac{u_{n_j+1}^{(j)}-u_{n_j}^{(j)}}{l_{n_j}^{(j)}}
=M\lambda^2{\bf u}_{{\bf n}+1},
\\
\label{4.5}
&u_0^{(j)}=0, \quad  j=1,2,\ldots ,q-1. 
\end{align}

{\bf Neumann problem (N2).} If the pendant vertex ${\bf v}$ called root is allowed to move freely, 
we have to consider (\ref{4.1})--({\ref{4.5}) with
\begin{equation}
 \label{4.7}
 {\bf u}_0={\bf u}_1.
\end{equation}

{\bf Dirichlet problem (D2).} If we clamp the pendant vertex ${\bf v}$ called root like all other pendant vertices, 
we have to consider (\ref{4.1})--({\ref{4.5}) with
\begin{equation}
 \label{4.6}
 {\bf u}_0=0.
\end{equation}

\begin{remark}
\label{rem:3.1}
Note that the Dirichlet problem (D2) is nothing but the Neumann problem (N1) given by (\ref{2.1})--(\ref{2.4}); in both cases all pendant vertices are fixed while the mass $M$ at the centre is allowed to move freely.
\end{remark}

\noindent
{\bf Notation.} In the following two subsections we denote by
\begin{enumerate}
\item 
$ n:= \left\{ \begin{array}{ll}
{\bf n}+\mathop{\sum}_{j=1}^{q-1}n_j+1 & \mbox{ if } M>0, \\ 
{\bf n}+\mathop{\sum}_{j=1}^{q-1}n_j   & \mbox{ if } M=0.
\end{array}
\right. 
$ the total number of masses on the star graph,
\vspace{2mm}
\item $ \left\{ \begin{array}{ll} \{\mu_k\}_{k=-(n+1),\,k\neq 0}^{n+1} & \mbox{if } M>0, \\ \{\mu_k\}_{k=-n,\,k\neq 0}^{n} & \mbox{if } M=0, \end{array} \right.\!$, $\mu_{-k}=-\mu_k$, $\mu_k\ge\mu_{k'}$ for
$k>k'>0,$ 
the eigen\-values of the Neumann problem (N2) given by 
(\ref{4.1})--(\ref{4.5}), (\ref{4.7}) on the star graph,
\vspace{2mm}
\item $ \left\{ \begin{array}{ll} \{\lambda_k\}_{k=-(n+1),\,k\neq 0}^{n+1} & \mbox{if } M>0, \\ \{\lambda_k\}_{k=-n,\,k\neq 0}^{n} & \mbox{if } M=0, \end{array} \right.$, $\lambda_{-k}=-\lambda_k$, $\lambda_k\ge\lambda_{k'}$ for
$k>k'>0$, 
the eigen\-values of the Dirichlet problem (D2) given by (\ref{4.1})--(\ref{4.5}), (\ref{4.6}) on the star graph.
\end{enumerate}

\subsection{Direct spectral problem for a star graph with root at a pendant vertex}
\label{subsec:3.1}

In this subsection we investigate the relations of the eigenvalues of the Neumann problem (N2) with those of the Dirichlet problem (D2). For the strings labelled $j=1,2,\dots,q-1$ we proceed precisely as in Subsection \ref{subsec:2.1} following \cite[Supplement~II.4]{GK}; for the main edge we proceed similarly, now following \cite[Supplement~II.7]{GK}.

In this way we obtain the solutions  ${\bf u}_k$ ($k=1,2,\dots,{\bf n}+1$) of (\ref{4.1}) and,  for $j=1,2,\dots,q-1$, the solutions
$u_k^{(j)}$ ($k=1,2,\dots,n_j+1$) of (\ref{4.2}) successively in~the~form 
\begin{align*}
{\bf u}_k&=\left\{ \begin{array}{ll} 
{\bf R}_{2k-2}({\bf l}_0,\lambda^2){\bf u}_1 & \mbox{ for the Dirichlet condition (\ref{4.6})}, \\ 
{\bf R}_{2k-2}(\infty,\lambda^2){\bf u}_1 &  \mbox{ for the Neumann condition (\ref{4.7})},
\end{array} \right. \\
u_k^{(j)}&=\ R_{2k-2}^{(j)}(\lambda^2)u_1^{(j)} \quad
(k=1,2,\ldots ,n_j), 
\end{align*}
where ${\bf R}_{2k-2}(\cdot,\lambda^2)$ and $R_{2k-2}^{(j)}(\lambda^2)$  are polynomials of
degree $2k-2$ which can be obtained solving (\ref{4.1}) and (\ref{4.2}), respectively. 
We set
\begin{alignat*}{2}
{\bf R}_{2k-1}(\cdot,\lambda^2)
&:=\frac{{\bf R}_{2k}(\cdot,\lambda^2)-{\bf R}_{2k-2}(\cdot,\lambda^2)}{{\bf l}_k} \quad &&(k=1,2,\ldots ,{\bf n}),\\
\hspace{3mm} 
R_{2k-1}^{(j)}(\lambda^2)
&:=\frac{R_{2k}^{(j)}(\lambda^2)-R_{2k-2}^{(j)}(\lambda^2)}{l_k^{(j)}} \quad &&(k=1,2,\ldots ,n_j).
\end{alignat*}
Then, due to (\ref{4.2}) and the initial condition $u_0^{(j)}=0$ in (\ref{4.5}), the polynomials $R_{0}^{(j)}(\lambda^2)$, $R_{1}^{(j)}(\lambda^2)$, \dots, $R_{2n_j}^{(j)}(\lambda^2)$ $(j=1,2,\dots,q-1$) satisfy the same recurrence relations (\ref{2.8})--(\ref{2.10}) as in Section~\ref{thm:2.2}.
The same is true, due to (\ref{4.1}), for the polynomials ${\bf R}_0({\bf l}_0,\lambda^2)$,  ${\bf R}_1({\bf l}_0,\lambda^2)$, \dots,  
${\bf R}_{2 {\bf n}}({\bf l}_0,\lambda^2)$ if we consider the Dirichlet condition (\ref{4.6}); the corresponding polynomials ${\bf R}_0(\infty,\lambda^2)$,  ${\bf R}_1(\infty,\lambda^2)$, \dots,  ${\bf R}_{2 {\bf n}}(\infty,\lambda^2)$ for the Neumann condition (\ref{4.7}) 
satisfy the same recurrence relations if we set ${\bf R}_{-1}(\infty,\cdot) :=0$, i.e. ${\bf l}_0=\infty$ (thus explaining the notation):
\begin{align}
\label{4.8}
&{\bf R}_{2k-1}({\bf l}_0,\lambda^2)
=-\lambda^2{\bf m}_{k}{\bf R}_{2k-2}({\bf l}_0,\lambda^2)+{\bf R}_{2k-3}({\bf l}_0,\lambda^2),
\\[1mm]
\label{4.9}
&{\bf R}_{2k}({\bf l}_0,\lambda^2) ={\bf l}_k {\bf R}_{2k-1}({\bf l}_0,\lambda^2)+{\bf R}_{2k-2}({\bf l}_0,\lambda^2),
\\
\label{4.10} 
&{\bf R}_0({\bf l}_0, \lambda^2)=1, \ \ 
{\bf R}_{-1}({\bf l}_0,\lambda^2)=\left\{ \begin{array}{ll} \frac{1}{{\bf l}_0} & \mbox{ if } {\bf l}_0 \in (0,\infty), \\[1mm]
\,0 & \mbox{ if } {\bf l}_0 = \infty. \end{array}\right. 
\end{align}

The conditions (\ref{4.3}) and (\ref{4.4}) at the central vertex yield the following system of linear equations for 
${\bf u}_1$, $u_1^{(j)}$ ($j=1,2,\dots,q-1$):
\begin{align*}
&{\bf R}_{2{\bf n}}({\bf l}_{0},\lambda^2){\bf u}_1=R_{2n_1}^{(1)}(\lambda^2)u_1^{(1)}=R_{2n_2}^{(2)}(\lambda^2)u_1^{(2)}=\cdots
=R_{2n_{q-1}}^{(q-1)}(\lambda^2)u_1^{(q-1)},
\\
&{\bf R}_{2{\bf n}-1}({\bf l}_{0},\lambda^2){\bf u}_1+\mathop{\sum}_{j=1}^{q-1}R_{2n_j-1}^{(j)}(\lambda^2)u_1^{(j)}=M\lambda^2{\bf
R}_{2{\bf n}}({\bf l}_{0},\lambda^2){\bf u}_1.
\end{align*}
Therefore, the spectrum of the Dirichlet problem (D2) given by
(\ref{4.1})--(\ref{4.5}), (\ref{4.6}) coin\-cides with the set of zeros of the
polynomial
\begin{equation}\label{4.11}
\begin{array}{rl}
\hspace{-2mm}
\phi({\bf l}_0,\lambda^2)\!=&\hspace{-3mm}{\bf R}_{2{\bf n}}({\bf l}_{0 },\lambda^2)\mathop{\sum}\limits_{j=1}^{q-1}\! \Big[
R_{2n_j-1}^{(j)}(\lambda^2)\mathop{\prod}\limits_{k=1, \ k\not=j}^{q-1}\!R_{2n_k}^{(k)}(\lambda^2) \Big]\\
&\hspace{-3mm}+{\bf R}_{2{\bf n}-1}({\bf l}_{0},\lambda^2)\mathop{\prod}\limits_{k=1 }^{q-1}\!\!R_{2n_k}^{(k)}(\lambda^2)
-M\lambda^2{\bf R}_{2{\bf n}}({\bf l}_{0},\lambda^2)\mathop{\prod}\limits_{k=1}^{q-1}\!\!R_{2n_k}^{(k)}(\lambda^2),
\end{array}
\end{equation}
and the spectrum of the Neumann problem (N2) given by
(\ref{4.1})--(\ref{4.5}), (\ref{4.7}) coin\-cides with the set of zeros of
\begin{equation}\label{4.15}
\begin{array}{rl}
\hspace{-2mm}
\phi(\infty,\lambda^2)\!=& \hspace{-3mm}{\bf R}_{2{\bf n}}(\infty,\lambda^2)\mathop{\sum}\limits_{j=1}^{q-1} \Big[
R_{2n_j-1}^{(j)}(\lambda^2)\mathop{\prod}\limits_{k=1, \ k\not=j}^{q-1}R_{2n_k}^{(k)}(\lambda^2) \Big]\\
&\hspace{-3mm}+{\bf R}_{2{\bf n}-1}(\infty,\lambda^2)\mathop{\prod}\limits_{k=1 }^{q-1}\!\!R_{2n_k}^{(k)}(\lambda^2)
\!-\!M\lambda^2{\bf R}_{2{\bf n}}(\infty,\lambda^2)\mathop{\prod}\limits_{k=1 }^{q-1}\!\!R_{2n_k}^{(k)}(\lambda^2).\!\!
\end{array}
\end{equation}
The degree of each of the polynomials $\phi({\bf l}_0,z)$ and $\phi(\infty,z)$ is equal to 
$n$ where $n$ is the total number of masses on the star graph (including the mass $M$ in the centre \vspace{-3mm} if $M>0$).
}

\begin{proposition}
Let $\phi_{D,q-1}(z)$, $\phi_{N,q-1}(z)$ be defined as in \eqref{2.12}, \eqref{2.13} for the subgraph of the $q-1$ edges that are not the main.
Then
\begin{align*}
\phi({\bf l}_0,z)&= \, {\bf R}_{2{\bf n}}({\bf l}_{0 },z) \phi_{N,q-1}(z) + \,{\bf R}_{2{\bf n}-1}({\bf l}_{0 },z) \phi_{D,q-1}(z),\\
\phi(\infty,z)&= {\bf R}_{2{\bf n}}(\infty,z) \phi_{N,q-1}(z) + {\bf R}_{2{\bf n}-1}(\infty,z) \phi_{D,q-1}(z).
\end{align*}
\end{proposition}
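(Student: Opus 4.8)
The plan is to obtain both formulas by a direct comparison of the explicit expressions (\ref{4.11}) and (\ref{4.15}) with the definitions (\ref{2.12}), (\ref{2.13}) applied to the star subgraph obtained from the original graph by deleting the main edge. First I would record that this subgraph is precisely a star graph of the kind treated in Subsection~\ref{subsec:2.1}, with $q$ replaced by $q-1$: it consists of the $q-1$ edges labelled $j=1,\dots,q-1$ joined at the same central vertex, which still carries the mass $M$ and is still free to move, while all its $q-1$ pendant vertices are fixed. Hence (\ref{2.12}) and (\ref{2.13}) apply with the same central mass $M$ and give $\phi_{D,q-1}(z)=\prod_{k=1}^{q-1}R_{2n_k}^{(k)}(z)$ and, multiplying out the right-hand side of (\ref{2.13}),
\begin{align*}
 \phi_{N,q-1}(z) &= \Bigl( \sum_{j=1}^{q-1} \frac{R_{2n_j-1}^{(j)}(z)}{R_{2n_j}^{(j)}(z)} - Mz \Bigr) \phi_{D,q-1}(z) \\
 &= \sum_{j=1}^{q-1} R_{2n_j-1}^{(j)}(z) \prod_{k=1,\,k\ne j}^{q-1} R_{2n_k}^{(k)}(z) - Mz \prod_{k=1}^{q-1} R_{2n_k}^{(k)}(z).
\end{align*}

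Next I would regroup the three summands in (\ref{4.11}) according to the two common factors ${\bf R}_{2{\bf n}}({\bf l}_0,z)$ and ${\bf R}_{2{\bf n}-1}({\bf l}_0,z)$, which yields
\begin{align*}
\phi({\bf l}_0,z) &= {\bf R}_{2{\bf n}}({\bf l}_0,z) \Bigl[ \sum_{j=1}^{q-1} R_{2n_j-1}^{(j)}(z) \prod_{k=1,\,k\ne j}^{q-1} R_{2n_k}^{(k)}(z) - Mz \prod_{k=1}^{q-1} R_{2n_k}^{(k)}(z) \Bigr] \\
&\qquad\quad + {\bf R}_{2{\bf n}-1}({\bf l}_0,z) \prod_{k=1}^{q-1} R_{2n_k}^{(k)}(z).
\end{align*}
By the previous paragraph the bracketed expression equals $\phi_{N,q-1}(z)$ and the trailing product equals $\phi_{D,q-1}(z)$, which is exactly the first asserted identity. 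The second identity then follows verbatim from the same regrouping applied to (\ref{4.15}): formula (\ref{4.15}) is obtained from (\ref{4.11}) by the substitution ${\bf l}_0\mapsto\infty$ (equivalently ${\bf R}_{-1}(\infty,\cdot):=0$), and this substitution affects only the polynomials ${\bf R}_{2{\bf n}}(\,\cdot\,,z)$, ${\bf R}_{2{\bf n}-1}(\,\cdot\,,z)$ of the main edge, not the polynomials $R_{2n_k}^{(k)}(z)$, $R_{2n_k-1}^{(k)}(z)$ of the $q-1$ other edges, hence not $\phi_{N,q-1}$ and $\phi_{D,q-1}$.

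I do not expect a genuine obstacle here: the argument is routine bookkeeping once the $(q-1)$-edge subgraph is identified as an instance of (N1)/(D1). The single point that warrants a line of care is the coefficient of $Mz$. If one prefers to argue from the product form (\ref{2.11}) rather than from (\ref{2.13}), the factor $\tfrac{M}{q-1}$ occurring there must be reconciled with the factor $M$ appearing in (\ref{4.11}); this is handled by the elementary identity $\sum_{j=1}^{q-1} R_{2n_j}^{(j)}(z)\prod_{k\ne j}R_{2n_k}^{(k)}(z) = (q-1)\prod_{k=1}^{q-1}R_{2n_k}^{(k)}(z)$. Starting instead from (\ref{2.13}), as above, avoids even this small computation.
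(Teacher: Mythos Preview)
Your proof is correct and is exactly the approach implicit in the paper: the proposition is stated there without proof, as it follows by the very regrouping of the three summands in (\ref{4.11}) and (\ref{4.15}) that you carry out, together with the identifications of $\phi_{D,q-1}$ and $\phi_{N,q-1}$ from (\ref{2.12}) and (\ref{2.13}). Your closing remark about reconciling the $\tfrac{M}{q-1}$ in (\ref{2.11}) with the $M$ in (\ref{4.11}) via (\ref{2.13}) is a nice clarification but, as you note, unnecessary once one starts from (\ref{2.13}).
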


\begin{remark}
The fact that the Dirichlet problem (D2) coincides with the Neumann problem (N1) (see Remark 3.1) can also be seen from the equality of their characteristic functions: $\phi({\bf l}_0,\lambda^2)=\phi_{N,q}(\lambda^2)$ (compare (\ref{2.13})). 
\end{remark}

\begin{lemma}
\label{lem:4.4a}
We have the following continued fraction expansions:
\begin{align}
\label{vienna7}
\hspace{2mm} \frac{{\bf R}_{2{\bf n}}({\bf l}_0,z)}{{\bf R}_{2{\bf n}-1}({\bf l}_0,z)}
&={\bf l}_{{\bf n}}+\frac{1}{-{\bf m}_{{\bf n}}z+\frac{1}{{\bf l}_{{\bf n}-1}
+\frac{1}{-{\bf m}_{{\bf n}-1}z+\dots +\frac{1}{{\bf l}_1+\frac{1}{-{\bf m}_1z+\frac{1}{{\bf l}_0}}}}}}; 
\\
\label{4.26}
{\bf l}_0\frac{{\bf R}_{2{\bf n}}({\bf l}_0,z)}{{\bf R}_{2{\bf n}}(\infty,z)}
&={\bf l}_0+\frac{1}{-{\bf m}_1z+\frac{1}{{\bf l}_1+\frac{1}{-{\bf m}_2z+\dots 
+\frac{1}{{\bf l}_{{\bf n}-1}+\frac{1}{-{\bf m}_{\bf n}z+\frac{1}{{\bf l}_{\bf n}}}}}}};
\\
\label{vienna8} 
{\bf l}_0\frac{{\bf R}_{2{\bf n}-1}({\bf l}_0,z)}{{\bf R}_{2{\bf n}-1}(\infty,z)}
&={\bf l}_0+\frac{1}{-{\bf m}_1z+\frac{1}{{\bf l}_1+\frac{1}{-{\bf m}_2z+\dots 
+\frac{1}{{\bf l}_{{\bf n}-1}+\frac{1}{-{\bf m}_{\bf n}z}}}}}. 
\end{align} 
{\it In particular, the total length of the main string satisfies}
\[
{\bf l}={\bf l}_0\frac{{\bf R}_{2{\bf n}}({\bf l}_0,0)}{{\bf
R}_{2{\bf n}}(\infty,0)}.
\]
\end{lemma}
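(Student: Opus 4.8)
The plan is to prove all three continued fraction expansions by induction on the number of masses on the main string, in each case following the standard Stieltjes-string scheme already used in Remark~\ref{rem:nev} for the expansion \eqref{confrac}. First I would deal with \eqref{vienna7}: the polynomials ${\bf R}_k({\bf l}_0,\cdot)$ satisfy exactly the recurrence relations \eqref{4.8}--\eqref{4.10} with initial values ${\bf R}_0({\bf l}_0,z)=1$, ${\bf R}_{-1}({\bf l}_0,z)=1/{\bf l}_0$, which are formally identical to \eqref{2.8}--\eqref{2.10}. Hence the argument of Remark~\ref{rem:nev} applies verbatim: from \eqref{4.8}, \eqref{4.9} one gets, in analogy with \eqref{2.13a}, the one-step identity
\[
  \frac{{\bf R}_{2k}({\bf l}_0,z)}{{\bf R}_{2k-1}({\bf l}_0,z)}
  = {\bf l}_k + \frac{1}{-{\bf m}_k z + \dfrac{1}{\ {\bf R}_{2k-2}({\bf l}_0,z)/{\bf R}_{2k-3}({\bf l}_0,z)\ }},
\]
and induction down from $k={\bf n}$ to the base case ${\bf R}_0/{\bf R}_{-1}={\bf l}_0$ yields \eqref{vienna7}.

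For \eqref{4.26} and \eqref{vienna8} I would argue along the same lines but comparing the two families ${\bf R}_k({\bf l}_0,\cdot)$ and ${\bf R}_k(\infty,\cdot)$, which satisfy the \emph{same} recurrence \eqref{4.8}--\eqref{4.9} but differ in the initial term: ${\bf R}_{-1}({\bf l}_0,z)=1/{\bf l}_0$ versus ${\bf R}_{-1}(\infty,z)=0$, while ${\bf R}_0=1$ in both cases. The key observation is that a ratio of the form ${\bf R}_{2k}({\bf l}_0,z)/{\bf R}_{2k}(\infty,z)$ can be unfolded from the \emph{bottom} of the string rather than the top: starting from ${\bf l}_0\,{\bf R}_0({\bf l}_0,z)/{\bf R}_0(\infty,z)={\bf l}_0$ as the base, a single application of \eqref{4.8} and \eqref{4.9} gives a one-step relation expressing ${\bf l}_0\,{\bf R}_{2k}({\bf l}_0,z)/{\bf R}_{2k}(\infty,z)$ in terms of ${\bf l}_0\,{\bf R}_{2k-2}({\bf l}_0,z)/{\bf R}_{2k-2}(\infty,z)$ with the new layer ${\bf l}_{k-1}+1/(-{\bf m}_k z+\cdots)$ appended at the bottom; iterating from $k=0$ up to $k={\bf n}$ produces \eqref{4.26}. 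Expansion \eqref{vienna8} is obtained by the identical bookkeeping applied to the odd-indexed polynomials ${\bf R}_{2k-1}$, the only difference being that the innermost layer terminates in $-{\bf m}_{\bf n}z$ rather than $-{\bf m}_{\bf n}z+1/{\bf l}_{\bf n}$, reflecting the fact that ${\bf R}_{2{\bf n}-1}$ does not involve the last length ${\bf l}_{\bf n}$.

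Finally, the stated formula for the total length follows by setting $z=0$ in \eqref{4.26}: at $z=0$ every term $-{\bf m}_k z$ vanishes, so the nested fraction collapses termwise to ${\bf l}_0+{\bf l}_1+\cdots+{\bf l}_{\bf n}={\bf l}$, giving ${\bf l}={\bf l}_0\,{\bf R}_{2{\bf n}}({\bf l}_0,0)/{\bf R}_{2{\bf n}}(\infty,0)$.

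The routine part is the algebra of the recurrence relations; the only point requiring care — and the main obstacle — is getting the direction of the unfolding right in \eqref{4.26} and \eqref{vienna8}. Unlike \eqref{vienna7}, where one peels layers off the top (from the centre outward) exactly as in \eqref{confrac}, here one must build the fraction from the bottom (from the root outward), and one has to check that the ${\bf R}(\infty,\cdot)$ family, having ${\bf R}_{-1}(\infty,\cdot)=0$ as its "seed", plays consistently the role of the denominator throughout the induction; it is precisely this choice of initial data that makes ${\bf l}_0$ appear as the leading term and the innermost rung of \eqref{vienna8} truncate without a ${\bf l}_{\bf n}$-term.
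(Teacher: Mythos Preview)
Your treatment of \eqref{vienna7} is fine and matches the paper: the recurrence \eqref{4.8}--\eqref{4.10} is formally identical to \eqref{2.8}--\eqref{2.10}, so the argument of Remark~\ref{rem:nev} applies verbatim. The formula for ${\bf l}$ at $z=0$ is also correct.

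For \eqref{4.26} and \eqref{vienna8}, however, the ``one-step relation'' you describe does not exist in the form you state it. You claim that ${\bf l}_0\,{\bf R}_{2k}({\bf l}_0,z)/{\bf R}_{2k}(\infty,z)$ can be expressed in terms of ${\bf l}_0\,{\bf R}_{2k-2}({\bf l}_0,z)/{\bf R}_{2k-2}(\infty,z)$ alone, with a new layer appended at the \emph{bottom} of the continued fraction. But modifying the innermost rung of a finite continued fraction is a M\"obius transformation whose coefficients depend on all outer layers, not a function of the previous ratio alone. What \emph{does} work is to identify ${\bf l}_0{\bf R}_{2k}({\bf l}_0,z)$ and ${\bf R}_{2k}(\infty,z)$ separately as the numerator and denominator convergents of the continued fraction in \eqref{4.26}: both pairs satisfy the same three-term recurrence (in $k$, with partial quotients ${\bf l}_k$ and $-{\bf m}_k z$) and one checks the initial conditions match. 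So your inductive scheme is salvageable, but it must be run on the \emph{pair} $(P_k,Q_k)$, not on their ratio.

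The paper avoids this computation entirely. For \eqref{4.26} it simply observes that this is \eqref{vienna7} for the \emph{same string read with the opposite orientation}: relabelling ${\bf l}_k\leftrightarrow{\bf l}_{{\bf n}-k}$, ${\bf m}_k\leftrightarrow{\bf m}_{{\bf n}+1-k}$ turns the Dirichlet/Neumann pair at one end into the pair at the other, and \eqref{vienna7} becomes \eqref{4.26}. For \eqref{vienna8} the paper cites \cite[Supplement~II, p.~332/333]{GK}. Your direct approach, once corrected, gives a self-contained proof; the paper's reversal trick is quicker and more conceptual.
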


\begin{proof}
The expansion (\ref{vienna7}) may be found in \cite[Supplement II, (18)]{GK}) (see also (\ref{confrac})). 
The expansion (\ref{4.26}) follows if we consider the same string with opposite orientation.
The expansion (\ref{vienna8}) may be found in \cite[Supplement II, p.\ 332/333]{GK}) 
(where the notation $Q_s(z)$ is used instead of ${\bf R}_s(\infty,z)$).
\end{proof}

\begin{lemma}
\label{lem:4.4}
{\rm [Lagrange identity]} For $k=0,1,\dots,{\bf n}$, we have
\begin{equation}
\label{4.21} {\bf R}_{2k}({\bf l}_0,z){\bf
R}_{2k-1}(\infty,z)-{\bf R}_{2k-1}({\bf l}_0,z){\bf
R}_{2k}(\infty,z)=-\frac{1}{{\bf l}_0}.
\end{equation}
\end{lemma}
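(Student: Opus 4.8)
The natural approach is induction on $k$. For the base case $k=0$, the recurrence relations (\ref{4.8})--(\ref{4.10}) give ${\bf R}_0({\bf l}_0,z)={\bf R}_0(\infty,z)=1$, ${\bf R}_{-1}({\bf l}_0,z)=1/{\bf l}_0$ and ${\bf R}_{-1}(\infty,z)=0$, so the left-hand side of (\ref{4.21}) equals $1\cdot 0 - (1/{\bf l}_0)\cdot 1 = -1/{\bf l}_0$, as claimed. (Strictly speaking we should check the half-step identity ${\bf R}_{2k+1}({\bf l}_0,z){\bf R}_{2k}(\infty,z)-{\bf R}_{2k}({\bf l}_0,z){\bf R}_{2k+1}(\infty,z)=-1/{\bf l}_0$ as well, since the recurrence alternates between the odd-to-even step (\ref{4.9}) and the even-to-odd step (\ref{4.8}).)

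For the inductive step, suppose the identity holds at level $2k-1$, i.e.\ that ${\bf R}_{2k-1}({\bf l}_0,z){\bf R}_{2k-2}(\infty,z)-{\bf R}_{2k-2}({\bf l}_0,z){\bf R}_{2k-1}(\infty,z) = -1/{\bf l}_0$. Using the even-step relation (\ref{4.9}), namely ${\bf R}_{2k}(\,\cdot\,,z)={\bf l}_k{\bf R}_{2k-1}(\,\cdot\,,z)+{\bf R}_{2k-2}(\,\cdot\,,z)$ applied with both boundary data ${\bf l}_0$ and $\infty$, I would substitute into the bracket ${\bf R}_{2k}({\bf l}_0,z){\bf R}_{2k-1}(\infty,z)-{\bf R}_{2k-1}({\bf l}_0,z){\bf R}_{2k}(\infty,z)$. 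The ${\bf l}_k{\bf R}_{2k-1}{\bf R}_{2k-1}$ terms cancel against each other, leaving exactly ${\bf R}_{2k-2}({\bf l}_0,z){\bf R}_{2k-1}(\infty,z)-{\bf R}_{2k-1}({\bf l}_0,z){\bf R}_{2k-2}(\infty,z)$, which by the inductive hypothesis equals $-1/{\bf l}_0$. This advances the identity one half-step. Next, using the odd-step relation (\ref{4.8}), namely ${\bf R}_{2k+1}(\,\cdot\,,z)=-\lambda^2{\bf m}_{k+1}{\bf R}_{2k}(\,\cdot\,,z)+{\bf R}_{2k-1}(\,\cdot\,,z)$, I would similarly expand ${\bf R}_{2k}({\bf l}_0,z){\bf R}_{2k+1}(\infty,z)-{\bf R}_{2k+1}({\bf l}_0,z){\bf R}_{2k}(\infty,z)$: the $-\lambda^2{\bf m}_{k+1}{\bf R}_{2k}{\bf R}_{2k}$ terms cancel, leaving ${\bf R}_{2k}({\bf l}_0,z){\bf R}_{2k-1}(\infty,z)-{\bf R}_{2k-1}({\bf l}_0,z){\bf R}_{2k}(\infty,z)=-1/{\bf l}_0$ by the previous half-step. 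Iterating these two cancellations from $k=0$ up to $k={\bf n}$ gives the claim.

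There is no real obstacle here: the Wronskian-type quantity is preserved because each step of the recurrence adds to one index a multiple (by ${\bf l}_k$ or by $-\lambda^2{\bf m}_{k+1}$, quantities \emph{independent of the boundary datum} ${\bf l}_0$ versus $\infty$) of the other index, and in a $2\times 2$ determinant such a row/column operation changes nothing. The only point requiring a little care is bookkeeping the two interleaved half-steps and making sure the sign and the constant $-1/{\bf l}_0$ are carried consistently; the initial values in (\ref{4.10}) fix both. An alternative, essentially equivalent, route is to observe that $\begin{pmatrix}{\bf R}_{2k}(\,\cdot\,,z)\\ {\bf R}_{2k-1}(\,\cdot\,,z)\end{pmatrix}$ evolves by multiplication with transfer matrices $\begin{pmatrix}1&{\bf l}_k\\0&1\end{pmatrix}$ and $\begin{pmatrix}1&0\\-\lambda^2{\bf m}_{k+1}&1\end{pmatrix}$, each of determinant $1$, acting identically on the two solutions distinguished only by their initial data; hence the determinant formed from the two solutions is constant in $k$ and equals its value at $k=0$, which is $-1/{\bf l}_0$.
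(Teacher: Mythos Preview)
Your argument is correct and is essentially identical to the paper's proof: both use the recurrence relations (\ref{4.8}) and (\ref{4.9}) to show that the Wronskian-type quantity is unchanged at each half-step, then evaluate at the initial data (\ref{4.10}); the paper phrases this as a descending chain from index $2k$ down to $0$, you phrase it as an ascending induction, but the content is the same. One small slip: the half-step hypothesis you wrote, ${\bf R}_{2k-1}({\bf l}_0,z){\bf R}_{2k-2}(\infty,z)-{\bf R}_{2k-2}({\bf l}_0,z){\bf R}_{2k-1}(\infty,z) = -1/{\bf l}_0$, has the wrong sign (it equals $+1/{\bf l}_0$, as one checks directly at $k=1$); the expression your substitution actually produces, ${\bf R}_{2k-2}({\bf l}_0,z){\bf R}_{2k-1}(\infty,z)-{\bf R}_{2k-1}({\bf l}_0,z){\bf R}_{2k-2}(\infty,z)$, is its negative and does equal $-1/{\bf l}_0$, so the conclusion stands once the bookkeeping is tidied.
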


\begin{proof} 
Using (\ref{4.9}), we~find that
\begin{eqnarray*}
&& {\bf R}_{2k}({\bf l}_0,z){\bf R}_{2k-1}(\infty,z)-{\bf R}_{2k-1}({\bf l}_0,z){\bf R}_{2k}(\infty,z)=\\
&& = {\bf R}_{2k-2}({\bf l}_0,z){\bf R}_{2k-1}(\infty,z)-{\bf R}_{2k-1}({\bf l}_0,z){\bf R}_{2k-2}(\infty,z),
\end{eqnarray*}
while from (\ref{4.8}) and (\ref{4.11}), we conclude that
\begin{eqnarray*}
&& {\bf R}_{2k-2}({\bf l}_0,z){\bf R}_{2k-1}(\infty,z)-{\bf R}_{2k-1}({\bf l}_0,z){\bf R}_{2k-2}(\infty,z)= \\
&& = {\bf R}_{2k-2}({\bf l}_0,z){\bf R}_{2k-3}(\infty,z)-{\bf R}_{2k-3}({\bf l}_0,z){\bf R}_{2k-2}(\infty,z).
\end{eqnarray*}
Using these two identities successively and taking into account
(\ref{4.10}), 
we arrive at
\[
\begin{array}{l}
{\bf R}_{2k}({\bf l}_0,z){\bf R}_{2k-1}(\infty,z)-{\bf R}_{2k-1}({\bf l}_0,z){\bf
R}_{2k}(\infty,z)= \\
= \dots = {\bf R}_{0}({\bf l}_0,z){\bf R}_{-1}(\infty,z)-{\bf R}_{-1}({\bf
l}_0,z){\bf R}_{0}(\infty,z)=-\displaystyle{\frac{1}{{\bf l}_0}}. 
\end{array} \qedhere
\]
\end{proof}

\begin{remark}
\label{rem:4.5}
Using (\ref{4.21}), (\ref{4.11}), and (\ref{4.15}), we find that 
\begin{align}
& {\bf l}_0\big(\phi({\bf l}_0, z){\bf R}_{2{\bf n}}(\infty,z)
-\phi(\infty,z){\bf R}_{2{\bf n}}({\bf l}_0,z)\big)=\mathop{\prod}\limits_{j=1}^{q-1} R_{2n_j}^{(j)}(z) =\phi_{D,q-1}(z), 
\label{4.22} 
\\
& \begin{array}{l}
\hspace{-1.5mm}
{\bf l}_0\big(-\phi({\bf l}_0, z){\bf R}_{2{\bf n}-1}(\infty,z)+\phi(\infty,z){\bf R}_{2{\bf n}-1}({\bf
l}_0,z)\big)\\
\hspace{6mm}
= \displaystyle{\mathop{\sum}\limits_{j=1}^{q-1} \Big[ R_{2n_j-1}^{(j)}(z)\!
\mathop{\prod}\limits_{k=1,k\not=j}^{q-1} \!\!R_{2n_k}^{(k)}(z)\Big]-Mz\mathop{\prod}\limits_{j=1}^{q-1}R_{2n_j}^{(j)}(z) =\phi_{N,q-1}(z)}.
\end{array}
\label{4.23}
\end{align}
\end{remark}

Note that the polynomial 
on the right-hand side of (\ref{4.22}), which has degree $\mathop{\sum}_{j=1}^{q-1}n_j$, 
is nothing but the characteristic function of the boundary value problem on the same star graph of $q-1$ edges with 
Dirichlet boundary conditions at the interior vertex; in particular, its zeros coincide with the union of the spectra of the Dirichlet problems
(\ref{2.5})--(\ref{2.7}) on the edges with $j=1,2,\dots,q-1$ (compare (\ref{2.12})).

The polynomial on the right-hand side of (\ref{4.23}), which has degree $\mathop{\sum}_{j=1}^{q-1}n_j+1$ if $M>0$ and $\mathop{\sum}_{j=1}^{q-1}n_j$ if $M=0$,
is the characteristic function of the boundary value problem (\ref{2.1})--(\ref{2.4}) on a star graph of
$q-1$ edges with Neumann boundary condition at the central vertex  (compare~(\ref{2.11})); 
in particular, its zeros coincide with the eigenvalues of the Neumann problem
(\ref{2.1})--(\ref{2.4}) on the subgraph of $q-1$ edges excluding the main edge. 

\begin{theorem}
\label{thm:4.6}
After cancellation of common
factors $($if any$)$ in the numerator and in the denominator, the \vspace{-1mm}
function
$$\frac{\phi({\bf l}_0,z)}{\phi(\infty,z)}$$ becomes an
$S_0$-function.
\end{theorem}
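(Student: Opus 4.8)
The plan is to reduce the claim to Theorem~\ref{thm:2.1}. By that theorem, applied to the star subgraph formed by the $q-1$ edges other than the main edge together with the mass $M$ at its central vertex, the function $\phi_{q-1}(z):=\phi_{D,q-1}(z)/\phi_{N,q-1}(z)$ is a rational $S_0$-function; it then remains to propagate this property through the main string by means of a continued fraction.

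First I would rewrite the quotient in terms of $\phi_{q-1}$. Dividing \eqref{4.22} by \eqref{4.23} and solving for $\phi({\bf l}_0,z)/\phi(\infty,z)$ (equivalently, dividing the two identities of the Proposition relating $\phi({\bf l}_0,z)$, $\phi(\infty,z)$ to $\phi_{N,q-1}$, $\phi_{D,q-1}$ by $\phi_{N,q-1}(z)$ and using $\phi_{q-1}=\phi_{D,q-1}/\phi_{N,q-1}$) gives
\[
\frac{\phi({\bf l}_0,z)}{\phi(\infty,z)}=\frac{{\bf R}_{2{\bf n}}({\bf l}_0,z)+{\bf R}_{2{\bf n}-1}({\bf l}_0,z)\,\phi_{q-1}(z)}{{\bf R}_{2{\bf n}}(\infty,z)+{\bf R}_{2{\bf n}-1}(\infty,z)\,\phi_{q-1}(z)}.
\]
Using the recurrence relations \eqref{4.8}--\eqref{4.10} for the polynomials ${\bf R}_s({\bf l}_0,z)$ and ${\bf R}_s(\infty,z)$, which differ only through the initial value in \eqref{4.10}, and expanding as in Remark~\ref{rem:nev} (cf.\ \eqref{2.13a} and Lemma~\ref{lem:4.4a}), I expect to arrive at the branching continued fraction
\[
\frac{\phi({\bf l}_0,z)}{\phi(\infty,z)}=1+\frac{1}{{\bf l}_0}\,\cfrac{1}{-{\bf m}_1z+\cfrac{1}{{\bf l}_1+\cfrac{1}{-{\bf m}_2z+\dots+\cfrac{1}{{\bf l}_{{\bf n}-1}+\cfrac{1}{-{\bf m}_{\bf n}z+\cfrac{1}{{\bf l}_{\bf n}+\phi_{q-1}(z)}}}}}}
\]
(read as $1+{\bf l}_0^{-1}\phi_{q-1}(z)$ if ${\bf n}=0$), whose non-branching coefficients are the lengths and masses of the main string and whose value at the branch point is the $S_0$-function $\phi_{q-1}(z)$.

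The $S_0$-property then follows by reading this expansion from the inside out, using that each of the operations $f\mapsto f+c$, $f\mapsto cf$ ($c>0$) and $f\mapsto\bigl(-mz+1/f\bigr)^{-1}$ ($m>0$) maps rational $S_0$-functions to rational $S_0$-functions. The first two are immediate from Definition~\ref{def:2.1}. For the third, write $g:=\bigl(-mz+1/f\bigr)^{-1}=f/(1-mzf)$; then $-1/g=mz-1/f$ is a Nevanlinna function, being the sum of the Nevanlinna functions $z\mapsto mz$ and $-1/f$ (here $-1/f$ is Nevanlinna because $f$ is a Nevanlinna function with no zeros off the real axis), so $g=-(-1/g)^{-1}$ is a Nevanlinna function; moreover, for $z\in(-\infty,0]$ we have $f(z)>0$, hence $1-mzf(z)\ge 1$ and $g(z)=f(z)/(1-mzf(z))>0$ with no pole at $z=0$, so $g$ is an $S_0$-function. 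Starting from $\phi_{q-1}$ and working outward along the continued fraction above --- add ${\bf l}_{\bf n}$, pass the mass ${\bf m}_{\bf n}$, add ${\bf l}_{{\bf n}-1}$, \dots, pass ${\bf m}_1$, and finally add $1$ and multiply by ${\bf l}_0^{-1}$ --- shows that $\phi({\bf l}_0,z)/\phi(\infty,z)$ is an $S_0$-function. Since a rational $S_0$-function has only simple, strictly interlacing poles and zeros by Lemma~\ref{lem:2.1a}~iii), it is automatically in lowest terms; this is what ``after cancellation of common factors'' refers to here.

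I expect the main obstacle to be the bookkeeping behind the continued-fraction identity in the middle step, i.e.\ tracking the two polynomial families ${\bf R}_s({\bf l}_0,\cdot)$ and ${\bf R}_s(\infty,\cdot)$ through \eqref{4.8}--\eqref{4.10} (an induction on ${\bf n}$, peeling off ${\bf l}_k$ and ${\bf m}_k$ one at a time, is the natural tool), together with the routine but indispensable verification that every intermediate function in the expansion is analytic and strictly positive at $z=0$ --- a fact inherited from $\phi_{q-1}(0)>0$ and the positivity of all ${\bf l}_k$ and ${\bf m}_k$ --- so that Lemma~\ref{lem:2.1a} and the $S_0$-preservation step are applicable at every stage.
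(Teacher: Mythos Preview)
Your approach is correct but takes a genuinely different route from the paper's. The paper gives a direct, self-contained ``energy'' argument: it multiplies each difference equation by the conjugate of the corresponding amplitude, takes imaginary parts, telescopes the sums over masses and edges, and arrives at
\[
{\rm Im}\Big(\frac{{\bf u}_1-{\bf u}_0}{{\bf l}_0{\bf u}_0}\Big)=({\rm Im}\,z)\cdot(\text{nonnegative quadratic form in the amplitudes}),
\]
identifying the left-hand side with $-\phi(\infty,z)/\phi({\bf l}_0,z)$; positivity on $(-\infty,0]$ is then read off from~\eqref{4.8}--\eqref{4.10}. No appeal is made to Theorem~\ref{thm:2.1} or to any continued fraction.

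Your argument is modular: you import the $S_0$-property of $\phi_{q-1}$ from Theorem~\ref{thm:2.1} and propagate it along the main string via the branching continued fraction and the three $S_0$-preserving moves. One caution on logical order: in the paper, the branching expansion you write is Proposition~\ref{prop:4.3}, whose proof passes through Corollary~\ref{cor:4.2}, which in turn uses Theorem~\ref{thm:4.6}. You therefore must (as you indicate) derive the identity independently from~\eqref{4.8}--\eqref{4.10}. The clean induction step is that, for any $\psi$ and either initial condition in~\eqref{4.10},
\[
{\bf R}_{2k}+{\bf R}_{2k-1}\psi=\bigl(1-{\bf m}_kz({\bf l}_k+\psi)\bigr)\bigl({\bf R}_{2k-2}+{\bf R}_{2k-3}\psi'\bigr),\qquad \psi'=\bigl(-{\bf m}_kz+\tfrac{1}{{\bf l}_k+\psi}\bigr)^{-1},
\]
so the common factor cancels in the quotient and the M\"obius ratio telescopes down to $1+{\bf l}_0^{-1}\psi_{\mathrm{final}}$. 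With this in hand, your inside-out $S_0$-preservation argument is complete. The paper's route is shorter and independent of Section~2; yours exposes the recursive structure and essentially delivers Proposition~\ref{prop:4.3} as a by-product.
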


\begin{proof}
If we multiply the equation (\ref{4.2}) by $\overline{u_k^{(j)}}$, take the imaginary part on both sides and 
substitute $z=\lambda^2$, we obtain
\begin{equation} \label{4.16}
\frac{{\rm Im}\Big(\big(u_k^{(j)}-u_{k+1}^{(j)}\big)\overline{u_k^{(j)}}\Big)}{l_k^{(j)}}-
\frac{{\rm Im}\Big(\big(u_{k-1}^{(j)}-u_{k}^{(j)}\big)\overline{u_{k}^{(j)}}\Big)}{l_{k-1}^{(j)}}
\!=\! \big( {\rm Im}\, z \big) \, m_k^{(j)}|u_k^{(j)}|^2\!. 
\end{equation}
Summing up (\ref{4.16}) over $k= 1,\dots, n_j$, taking into account (\ref{4.5}) and the fact that the terms $u_k^{(j)}\overline{u_k^{(j)}}$, 
$u_{k+1}^{(j)}\overline{u_k^{(j)}}+u_k^{(j)}\overline{u_{k+1}^{(j)}}$ are real, we arrive at
\begin{equation}
\label{4.17}
\frac{{\rm Im}\Big(\big(u_{n_j}^{(j)}-u_{n_j+1}^{(j)}\big)\overline{u_{n_j+1}^{(j)}}\Big)}{l_{n_j}^{(j)}}=
- \frac{{\rm Im}\Big(u_{n_j+1}^{(j)}\overline{u_{n_j}^{(j)}}\Big)}{l_{n_j}^{(j)}} 
\!=\! \big({\rm Im}\,z\big)\mathop{\sum}\limits_{k=1}^{n_j}m_k^{(j)}|u_k^{(j)}|^2\!.
\end{equation}
Adding up the leftmost and rightmost side of (\ref{4.17}) for $j=1,2,\dots, q-1$ and taking into account (\ref{4.3}), we conclude that
\begin{equation}
\label{4.17a} 
{\rm Im}\Bigg( \bigg( \mathop{\sum}\limits_{j=1}^{q-1}\frac{u_{n_j}^{(j)}-u_{n_j+1}^{(j)}}{l_{n_j}^{(j)}} \bigg) \overline{u_{n_1+1}^{(1)}} \Bigg)
=\big({\rm Im}\,z\big) \mathop{\sum}\limits_{j=1}^{q-1}\mathop{\sum}\limits_{k=1}^{n_j}m_k^{(j)}|u_k^{(j)}|^2.
\end{equation}
In a similar way, using (\ref{4.1}) and (\ref{4.4}), we see that
\begin{eqnarray}
\label{4.17b} 
&&{\rm Im}\left(\frac{{\bf u}_{{\bf n}}-{\bf u}_{{\bf n}+1}}{{\bf l}_{{\bf n}}} \overline{{\bf u}_{{\bf n}+1}} 
+\frac{{\bf u}_1-{\bf u}_0}{{\bf l}_0} \overline{{\bf u}_0}\right)
=\big( {\rm Im}\,z\big) \mathop{\sum}\limits_{k=1}^{{\bf n}}{\bf m}_k|{\bf u}_k|^2,\\
&& 
\label{4.17c}
{\rm Im}\left(\frac{{\bf u}_{{\bf n}+1}-{\bf u}_{{\bf n}}}{{\bf l}_{{\bf n}}} \overline{{\bf u}_{{\bf n}+1}}  
+ {\rm Im}\Bigg( \bigg( \mathop{\sum}\limits_{j=1}^{q-1}\frac{u_{n_j+1}^{(j)}-u_{n_j}^{(j)}}{l_{n_j}^{(j)}} \bigg) \overline{{\bf u}_{{\bf n}+1}} \Bigg) \right) \\
&& \hspace*{5.7cm} = \big( {\rm Im}\,z\big) M \| {\bf u}_{{\bf n}+1}\|^2.
\nonumber
\end{eqnarray}
Adding (\ref{4.17a}), (\ref{4.17b}), and (\ref{4.17c}) and observing that $u_{n_1+1}^{(1)}={\bf u}_{{\bf n}+1}$ by (\ref{4.3}), we obtain
\begin{equation} \label{4.18}
{\rm Im}\left( \frac{{\bf u}_1\!-\!{\bf u}_0}{{\bf l}_0 {\bf u}_0} \overline{\bf u}_0\right)=\big({\rm Im}\,z\big) \, 
\frac{
\mathop{\sum}\limits_{k=1}^{{\bf n}}{\bf m}_k|
{\bf u}_k|^2 \!+\! M|{\bf u}_{{\bf n}+1}|^2 \!+\!
\mathop{\sum}\limits_{j=1}^{q-1}\mathop{\sum}\limits_{k=1}^{n_j}m_k^{(j)}|u_k^{(j)}|^2 
}{|{\bf u}_0|^{2}}.
\end{equation}
The right hand side of (\ref{4.18}) is positive if Im$\,z>0$. The set of zeros
of ${\bf u}_1-{\bf u}_0$ is nothing but the spectrum of problem
(\ref{4.1})--(\ref{4.5}), (\ref{4.7}), while the set of zeros of ${\bf
u}_0$ is the spectrum of problem (\ref{4.1})--(\ref{4.6}). This
means that $$\frac{{\bf u}_1-{\bf u}_0}{{\bf l}_0 {\bf
u}_0}=-\frac{\phi(\infty,z)}{\phi({\bf l}_0,z)}.$$
Together with (\ref{4.18}), this shows that $-\frac{\phi(\infty,z)}{\phi({\bf l}_0,z)}$ and hence $\frac{\phi({\bf
l}_0,z)}{\phi(\infty,z)}$ is a Nevanlinna function. To finish the
proof, we notice that according to (\ref{4.8})--(\ref{4.10}) all
$R_k^{(j)}(z)$ and ${\bf R}_k(z)$ are positive for $z=\lambda^2\in
(-\infty,0]$ and, consequently, the polynomials $\phi({\bf l}_0,z)$
and $\phi(\infty,z)$ are positive for $z\in (-\infty,0]$. 
\end{proof}

\medskip

\begin{corollary}
\label{cor:4.2}
 The ratio $\frac{\phi({\bf l}_0,z)}{\phi(\infty,z)}$ can be expanded into a continued
fraction: 
\begin{equation}
\label{vienna10}
{\bf l_0} \frac{\phi({\bf l}_0,z)}{\phi(\infty,z)}
= a_0+\frac{1}{-b_1z+\frac{1}{a_1+\frac{1}{-b_2z+\dots +\frac{1}{a_{p-1}+\frac{1}{-b_pz+\frac{1}{a_p}}}}}}
\end{equation}
with $p\in\{0,1,\dots,n\}$, $a_k>0$ $(k=0,1,\dots,p)$, and $b_k>0$ $(k=1,2,\dots,p)$. 
\end{corollary}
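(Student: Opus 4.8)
The plan is to read off the statement from Theorem~\ref{thm:4.6} combined with the description of rational $S_0$-functions in Lemma~\ref{lem:2.1a}~i). First I would note that multiplying an $S_0$-function by the positive constant ${\bf l}_0$ preserves all of properties 1)--6) in Definition~\ref{def:2.1}; hence, by Theorem~\ref{thm:4.6}, the rational function $f(z):={\bf l}_0\,\phi({\bf l}_0,z)/\phi(\infty,z)$, reduced to lowest terms, is a rational $S_0$-function. Denoting its number of poles by $p$, Lemma~\ref{lem:2.1a}~i) then furnishes the unique continued fraction expansion \eqref{vienna15a} of $f$, which is precisely the claimed formula \eqref{vienna10}, with $a_0=\lim_{z\to\pm\infty}f(z)\ge 0$ and $a_k,b_k>0$ for $k=1,2,\dots,p$.

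It then remains only to verify that $p\le n$ and that $a_0>0$. For the bound on $p$: the poles of the $S_0$-function $f$ are exactly the (simple) zeros of the reduced denominator of $f$, which divides $\phi(\infty,z)$; since $\deg\phi(\infty,z)=n$ (recall from the remark following \eqref{4.15} that $\deg\phi({\bf l}_0,z)=\deg\phi(\infty,z)=n$), we get $0\le p\le n$. For the strict positivity of $a_0$: since $\phi({\bf l}_0,z)$ and $\phi(\infty,z)$ have the same degree $n$, cancelling their common factor leaves the numerator and the denominator of $f$ of equal degree, so $a_0=\lim_{z\to\infty}f(z)$ equals ${\bf l}_0$ times the ratio of the leading coefficients of $\phi({\bf l}_0,z)$ and $\phi(\infty,z)$; this is a nonzero real number, and being $\ge 0$ it is in fact $>0$.

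I do not expect a genuine obstacle here. The only point that needs a little care is the strict inequality $a_0>0$ rather than the mere $a_0\ge 0$ supplied by Lemma~\ref{lem:2.1a}~i); it is settled by the degree comparison above, after which the corollary follows at once from Theorem~\ref{thm:4.6} and Lemma~\ref{lem:2.1a}~i).
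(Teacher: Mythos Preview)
Your proposal is correct and follows essentially the same route as the paper: invoke Theorem~\ref{thm:4.6} to get an $S_0$-function, apply Lemma~\ref{lem:2.1a}~i) for the continued fraction, and use the equality $\deg\phi({\bf l}_0,z)=\deg\phi(\infty,z)=n$ both to bound $p\le n$ and to force $a_0>0$. Your justification of $a_0>0$ via the ratio of leading coefficients is a minor variant of the paper's appeal to Lemma~\ref{lem:2.1a}~ii), but the argument is the same in substance.
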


\begin{proof}
By Theorem \ref{thm:4.6} and Lemma \ref{lem:2.1a}, 
the claimed continued fraction expansion follows if $p\le n$ is such that $n-p$ is the number (with multiplicities) of common zeros of $\phi({\bf l}_0,z)$ and $\phi(\infty,z)$. 
Since $\phi({\bf l}_0,z)$ and $\phi(\infty,z)$ both have degree $n$, Lemma~\ref{lem:2.1a} shows that $a_0>0$.
\end{proof}

A more general expansion into branching continued fractions which applies for trees was established in \cite{P0} 
(see \cite{S},~\cite{B} for details on branching continued fractions). 

%

\begin{proposition}
\label{prop:4.3} 
The ratio $\frac{\phi({\bf l}_0,z)}{\phi(\infty,z)}$ can be represented as a branching
continued fraction: 
\begin{align}
\label{vienna9}
{\bf l}_0\frac{\phi({\bf l}_0,z)}{\phi(\infty,z)}
={\bf l}_0+\frac{1}{-{\bf m}_1z+\frac{1}{{\bf l}_1+\frac{1}{-{\bf m}_2z+\dots 
+\frac{1}{{\bf l}_{{\bf n}-1}+\frac{1}{-{\bf m}_{\bf n}z+\frac{1}{{\bf l}_{\bf n}
+\frac{1}{-Mz+\mathop{\sum}\limits_{j=1}^{q-1} \frac 1{\phi^{(j)}(z)}}}}}}}},
\\[-10mm] \nonumber
\end{align}
where
\[
\frac 1{\phi^{(j)}(z)} = \frac{R_{2n_j-1}^{(j)}(z)}{R_{2n_j}^{(j)}(z)}
=\frac{1}{l_{n_j}^{(j)}+\frac{1}{-m_{n_j}^{(j)}z+\frac{1}{l_{n_j-1}^{(j)}+\dots+\frac{1}{l_0^{(j)}}}}}
\quad (j=1,2,\dots,q-1).
\]
\end{proposition}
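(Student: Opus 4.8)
The plan is to reduce the branching continued fraction \eqref{vienna9} to the scalar expansions of Lemma~\ref{lem:4.4a}, by routing $\phi({\bf l}_0,z)$ and $\phi(\infty,z)$ through the characteristic functions of the $(q-1)$-edge subgraph. First I would combine the identities for $\phi({\bf l}_0,z)$ and $\phi(\infty,z)$ established in the Proposition above (equivalently, \eqref{4.22}, \eqref{4.23} together with the Lagrange identity \eqref{4.21}) with \eqref{2.13} applied to the star subgraph of the $q-1$ non-main edges (which carries the central mass $M$) and with \eqref{phij}, so as to identify $\phi_{N,q-1}(z)/\phi_{D,q-1}(z)=-Mz+\sum_{j=1}^{q-1}1/\phi^{(j)}(z)=:g(z)$. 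Dividing numerator and denominator of $\phi({\bf l}_0,z)/\phi(\infty,z)$ first by $\phi_{D,q-1}(z)=\prod_{j=1}^{q-1}R_{2n_j}^{(j)}(z)\not\equiv 0$ and then by $g(z)$ yields the identity of rational functions
\[
 \frac{\phi({\bf l}_0,z)}{\phi(\infty,z)}
 =\frac{{\bf R}_{2{\bf n}}({\bf l}_0,z)+t\,{\bf R}_{2{\bf n}-1}({\bf l}_0,z)}{{\bf R}_{2{\bf n}}(\infty,z)+t\,{\bf R}_{2{\bf n}-1}(\infty,z)},\qquad t:=\frac1{g(z)} .
\]
Possible cancellations of common factors play no role here, since the branching continued fraction likewise represents a rational function.

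The core of the proof is then the purely algebraic identity, valid for $k=0,1,\dots,{\bf n}$ and a formal parameter $t$,
\[
 {\bf l}_0\,\frac{{\bf R}_{2k}({\bf l}_0,z)+t\,{\bf R}_{2k-1}({\bf l}_0,z)}{{\bf R}_{2k}(\infty,z)+t\,{\bf R}_{2k-1}(\infty,z)}
 ={\bf l}_0+\frac{1}{-{\bf m}_1z+\frac{1}{{\bf l}_1+\dots+\frac{1}{-{\bf m}_kz+\frac{1}{{\bf l}_k+t}}}},
\]
which I would prove by induction on $k$. The base case $k=0$ is immediate from \eqref{4.10}. For the inductive step, \eqref{4.8} and \eqref{4.9} give, simultaneously for the Dirichlet data ${\bf l}_0\in(0,\infty)$ and for the Neumann data ``${\bf l}_0=\infty$'',
\[
 {\bf R}_{2k}(\,\cdot\,,z)+t\,{\bf R}_{2k-1}(\,\cdot\,,z)
 =\bigl(1-{\bf m}_kz({\bf l}_k+t)\bigr)\Bigl({\bf R}_{2k-2}(\,\cdot\,,z)+s\,{\bf R}_{2k-3}(\,\cdot\,,z)\Bigr),\qquad s:=\frac1{-{\bf m}_kz+\frac1{{\bf l}_k+t}};
\]
the scalar $1-{\bf m}_kz({\bf l}_k+t)$ does not depend on the boundary condition at the root, hence cancels in the quotient, and the induction hypothesis applied at level $k-1$ with the new parameter $s$ --- which is exactly the last two levels of the continued fraction on the right --- closes the induction. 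As a consistency check, specialising to $t=0$ recovers \eqref{4.26} and the limit $t\to\infty$ recovers \eqref{vienna8}.

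Taking $k={\bf n}$ in this identity and substituting $t=1/g(z)=\bigl(-Mz+\sum_{j=1}^{q-1}1/\phi^{(j)}(z)\bigr)^{-1}$ from the first step produces precisely the non-branching skeleton of \eqref{vienna9}, with ${\bf l}_{\bf n}+1/g(z)$ at its foot; expanding each $1/\phi^{(j)}(z)=R_{2n_j-1}^{(j)}(z)/R_{2n_j}^{(j)}(z)$ into the continued fraction \eqref{confrac} of Remark~\ref{rem:nev} then fills in the branches and completes the proof. I do not expect a genuine obstacle: once the Proposition above and Lemma~\ref{lem:4.4a} are in hand the whole argument is bookkeeping. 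The one point requiring care is the orientation of the main string --- the interval ${\bf l}_0$ at the root must head the expansion and the interval ${\bf l}_{\bf n}$ at the centre must sit directly above the branching term $-Mz+\sum_{j=1}^{q-1}1/\phi^{(j)}(z)$ --- which is exactly why the ``reversed'' scalar expansion \eqref{4.26} (rather than \eqref{vienna7}) is the relevant special case for checking the base and boundary cases of the induction.
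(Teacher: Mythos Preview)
Your proof is correct and takes a genuinely different route from the paper. The paper proceeds indirectly: it first invokes Corollary~\ref{cor:4.2} (hence Theorem~\ref{thm:4.6} and the $S_0$-function machinery) to obtain an abstract continued fraction expansion of ${\bf l}_0\,\phi({\bf l}_0,z)/\phi(\infty,z)$ with unknown coefficients $a_k,b_k$; then matches $a_k={\bf l}_k$, $b_k={\bf m}_k$ for $k\le{\bf n}$ by iterated asymptotics as $z\to\infty$ using \eqref{4.24} and \eqref{4.26}; and finally identifies the tail $\widetilde f_{\bf n}$ by comparing zeros and poles via \eqref{4.24} and \eqref{4.24a}, normalising the remaining constant by evaluation at $z=0$. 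Your argument, by contrast, is purely algebraic: the quotient is rewritten as a M\"obius-type expression in the formal parameter $t=1/g(z)$ via the Proposition (i.e.\ $\phi(\,\cdot\,,z)={\bf R}_{2{\bf n}}(\,\cdot\,,z)\phi_{N,q-1}(z)+{\bf R}_{2{\bf n}-1}(\,\cdot\,,z)\phi_{D,q-1}(z)$), and the branching continued fraction drops out of a clean induction on the recurrences \eqref{4.8}--\eqref{4.10}. This bypasses Theorem~\ref{thm:4.6} and Corollary~\ref{cor:4.2} altogether and yields an identity in the field of rational functions, so the common-factor issue is indeed irrelevant. The paper's approach buys a closer tie to the $S_0$-structure already in play, but yours is shorter, more transparent, and shows that \eqref{vienna9} is really a formal consequence of the recurrences alone.
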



\begin{proof}
We can rewrite (\ref{4.22}) 
in the following two ways:
\begin{equation}
\label{4.24} {\bf l}_0\frac{\phi({\bf l}_0, z)}{\phi(\infty,z)}
-{\bf l}_0\frac{{\bf R}_{2{\bf n}}({\bf l}_0,z)}{{\bf R}_{2{\bf
n}}(\infty,z)} =\frac{\mathop{\prod}\limits_{j=1}^{q-1}
R_{2n_j}^{(j)}(z)}{{\bf R}_{2{\bf n}}(\infty,z)\phi(\infty,z)},
\end{equation}
\begin{equation}
\label{4.29} {\bf l}_0\frac{\phi(\infty, z)}{\phi({\bf l}_0,z)}
-{\bf l}_0\frac{{\bf R}_{2{\bf n}}(\infty,z)}{{\bf R}_{2{\bf
n}}({\bf l}_0,z)} =\frac{\mathop{\prod}\limits_{j=1}^{q-1}
R_{2n_j}^{(j)}(z)}{{\bf R}_{2{\bf n}}({\bf l}_0,z)\phi({\bf l}_0,z)},
\end{equation}
and we can rewrite (\ref{4.23}) as
\begin{align}
-{\bf l}_0\frac{\phi({\bf l}_0, z)}{\phi(\infty,z)} &+ {\bf l}_0\frac{{\bf R}_{2{\bf n}-1}({\bf
l}_0,z)}{{\bf R}_{2{\bf n}-1}(\infty,z)} \nonumber \\ 
\label{4.24a}
\hspace{2.2cm}
&=\frac{\mathop{\sum}\limits_{j=1}^{q-1} \Big[ R_{2n_j-1}^{(j)}(z)
\mathop{\prod}\limits_{k=1,k\not=j}^{q-1}R_{2n_k}^{(k)}(z)\Big] 
-Mz\mathop{\prod}\limits_{j=1}^{q-1}R_{2n_j}^{(j)}(z)}{{\bf R}_{2{\bf n}-1}(\infty,z)\phi(\infty,z) }. 
\end{align}
First we show that $a_k={\bf l}_k$ $(k=0,1,\dots,{\bf n})$, $b_k={\bf m}_k$ $(k=1,2,\dots,{\bf n})$. 
To this end we note that on the right hand side of (\ref{4.24}) the degree of the numerator is $\sum_{j=1}^{q-1} n_j$, while the degree of the denominator is $\sum_{j=1}^{q-1} n_j + 2{\bf n}$ if $M=0$ and $\sum_{j=1}^{q-1} n_j + 2{\bf n}+1$ if $M>0$, hence in any case
\begin{equation}
\label{vienna11}
\lim_{z\to \infty} z^k \frac{\mathop{\prod}\limits_{j=1}^{q-1}
R_{2n_j}^{(j)}(z)}{{\bf R}_{2{\bf n}}({\bf l}_0,z)\phi({\bf l}_0,z)}=0 \quad (k=0,1,\dots,{\bf n}).
\end{equation}
If we take the limit $z\to \infty$ in (\ref{vienna10}) and (\ref{4.26}), we find that
\begin{eqnarray*}
 a_0 = \lim_{z\to \infty} {\bf l}_0\frac{\phi({\bf l}_0, z)}{\phi(\infty,z)}, 
 \quad
 {\bf l_0} = \lim_{z\to \infty} {\bf l}_0\frac{{\bf R}_{2{\bf n}}({\bf l}_0,z)}{{\bf R}_{2{\bf n}}(\infty,z)}.
 & 
\end{eqnarray*}
Using all this in (\ref{4.24}) yields that $a_0={\bf l}_0$. Applying the same reasoning to the functions
\[
\left( z \left( {\bf l}_0 \frac{\phi({\bf l}_0, z)}{\phi(\infty,z)} - {\bf l}_0 \right) \right)^{-1},
\quad 
\left( z \left( {\bf l}_0  \frac{{\bf R}_{2{\bf n}}({\bf l}_0,z)}{{\bf R}_{2{\bf n}}(\infty,z)}- {\bf l}_0 \right) \right)^{-1}  
\]
and using the respective continued fraction expansions from (\ref{vienna10}) and (\ref{4.26}) together with (\ref{vienna11}) for $k=1$, we find that $b_1={\bf m}_1$. Since (\ref{vienna11}) may be used up to $k={\bf n}$, we may continue this reasoning up to the equalities $a_{\bf n}={\bf l_n}$, $b_{\bf n} = {\bf m_n}$. 

It remains to prove the particular form of the ${\bf n}$-th tail 
$ f_{\bf n}(z)= {\bf l_n} + \frac 1{\widetilde f_{\bf n}(z)}$ of the continued fraction in (\ref{vienna10}). 
To this end, we have to show that
\[
\widetilde f_{\bf n}(z):= -b_{{\bf n}+1} z + \frac 1 {a_{{\bf n}+1} + \frac 1{ -b_{{\bf n}+2} z + \dots 
+\frac{1}{a_{p-1}+\frac{1}{-b_pz+\frac{1}{a_p}}}}} =-Mz+\mathop{\sum}\limits_{j=1}^{q-1} \frac 1{\phi^{(j)}(z)}
\]
or, equivalently,
\begin{equation}
\label{vienna12}
\widetilde f_{\bf n}(z) 
= \frac{ \mathop{\sum}\limits_{j=1}^{q-1} \Big[ R_{2n_j-1}^{(j)}(z)\mathop{\prod}\limits_{k=1,k\not=j}^{q-1}R_{2n_k}^{(k)}(z) \Big] - Mz {\mathop{\prod}\limits_{j=1}^{q-1} R_{2n_j}^{(j)}(z) }}{{\mathop{\prod}\limits_{j=1}^{q-1} R_{2n_j}^{(j)}(z)}}. 
\end{equation}
Using (\ref{vienna8}), we find that $z_0$ is a zero of $\widetilde f_{\bf n}(z)$ if and only if
\[
{\bf l}_0\frac{\phi({\bf l}_0,z)}{\phi(\infty,z)}
={\bf l}_0+\frac{1}{-{\bf m}_1z+\frac{1}{{\bf
l}_1+\frac{1}{-{\bf m}_2z+\dots +\frac{1}{{\bf l}_{{\bf
n}-1}+\frac{1}{-{\bf m}_{\bf n}z}}}}}={\bf l}_0\frac{{\bf R}_{2{\bf n}-1}({\bf l}_0,z)}{{\bf R}_{2{\bf n}-1}(\infty,z)}.
\]
By (\ref{4.24a}), this holds if and only if $z_0$ is a zero of the numerator on the right hand side in (\ref{vienna12}). Similarly, using (\ref{4.26}) and (\ref{4.24}) instead of (\ref{vienna8}), (\ref{4.24a}), we find that  $z_0$ is a pole of $\widetilde f_{\bf n}(z)$ if and only if
$z_0$ is a zero of the denominator on the right hand side in (\ref{vienna12}). Hence $\widetilde f_{\bf n}(z)$ is a constant multiple of the right hand side of (\ref{vienna12}). 

To prove that this constant is, in fact, equal to 1 we use that by (\ref{vienna9}) for $z=0$  
\[
{\bf l}_0\frac{\phi({\bf l}_0,0)}{\phi(\infty,0)}
=\sum_{k=0}^{\bf n} {\bf l}_k + \frac{1}{\widetilde f_{\bf n}(0)} = {\bf l} + \frac{1}{\widetilde f_{\bf n}(0)}.
\]
On the other hand, due to the recurrence relations (\ref{4.8})--(\ref{4.10}), we have 
\[
\begin{array}{ll}
 {\bf R}_{2k-1}({\bf l}_0,0) = \displaystyle{\frac 1{{\bf l}_0}}, \quad & 
 {\bf R}_{2k}({\bf l}_0,0) = \displaystyle{\frac 1{{\bf l}_0} \sum_{s=0}^{k} {\bf l}_s} \\ 
 {\bf R}_{2k-1}(\infty,0) = 0, \quad  
 & {\bf R}_{2k}(\infty,0) = 1
\end{array} \quad (k=0,1,\dots,{\bf n}),
\]
and hence by (\ref{4.11}), (\ref{4.15}) for \vspace{-2mm} $z=0$
\[
{\bf l}_0\frac{\phi({\bf l}_0,0)}{\phi(\infty,0)} 
= {\bf l} + 
\frac{\mathop{\prod}\limits_{j=1}^{q-1} R_{2n_j}^{(j)}(0)} {\mathop{\sum}\limits_{j=1}^{q-1} \Big[ R_{2n_j-1}^{(j)}(0)\mathop{\prod}\limits_{k=1,k\not=j}^{q-1}R_{2n_k}^{(k)}(0) \Big]}
,
\]
which completes the proof of (\ref{vienna12}) and hence of Proposition \ref{prop:4.3}.
\end{proof}

\medskip

\begin{theorem}
\label{thm:4.2}
Let $z_0$ be a zero of multiplicity $k_0\geq
1$ for $\phi({\bf l}_0, z)$  and of multiplicity $k_\infty\geq 1$ for
$\phi(\infty,z)$. Then $z_0$ is a zero of multiplicity 
\[ \left\{
\begin{array}{lll} 
\!\min\{k_0,k_\infty\} & 
\mbox{of } \ \mathop{\sum}\limits_{j=1}^{q-1} \Big[ R_{2n_j-1}^{(j)}(z) \!\!\mathop{\prod}\limits_{k=1,k\not=j}^{q-1} \!\!R_{2n_k}^{(k)}(z)\Big]
&\mbox{and hence of }\phi_{N,q-1}(z)),
\\
\!\min\{k_0,k_\infty\}+1 \!\! & 
\mbox{of } \ \mathop{\prod}\limits_{j=1}^{q-1} R_{2n_j}^{(j)}(z) \ &(=\phi_{D,q-1}(z)). 
\end{array} \right. 
\]
and $k_0+k_\infty\leq 2q-3$.
\end{theorem}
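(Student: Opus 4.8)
The plan is to reduce the whole statement to the star subgraph $\mathcal G'$ formed by the $q-1$ edges other than the main edge, taken with root at the central vertex, for which the direct spectral analysis of Subsection~\ref{subsec:2.1} is available; throughout write $k_D$ and $k_N$ for the multiplicities of $z_0$ as a zero of $\phi_{D,q-1}(z)=\prod_{j=1}^{q-1}R_{2n_j}^{(j)}(z)$ and of $\phi_{N,q-1}(z)$. The first move is to rewrite the two identities \eqref{4.22}, \eqref{4.23} of Remark~\ref{rem:4.5} as a single vector relation expressing the column $\big(\phi_{D,q-1}(z),\phi_{N,q-1}(z)\big)$ as $S(z)$ times the column $\big(\phi({\bf l}_0,z),\phi(\infty,z)\big)$, where $S(z)={\bf l}_0$ times the $2\times2$ polynomial matrix with rows $\big({\bf R}_{2{\bf n}}(\infty,z),\,-{\bf R}_{2{\bf n}}({\bf l}_0,z)\big)$ and $\big(-{\bf R}_{2{\bf n}-1}(\infty,z),\,{\bf R}_{2{\bf n}-1}({\bf l}_0,z)\big)$. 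By the Lagrange identity (Lemma~\ref{lem:4.4} with $k={\bf n}$) the determinant of $S(z)$ is the nonzero \emph{constant} ${\bf l}_0$, so $S(z_0)$ is invertible; an order count then gives $\min\{k_D,k_N\}=\min\{k_0,k_\infty\}$ (the inequality $\ge$ is automatic since the entries of $S(z)\big(\phi({\bf l}_0,z),\phi(\infty,z)\big)$ are $\R[z]$-combinations of $\phi({\bf l}_0,z),\phi(\infty,z)$, and for $\le$ one factors $(z-z_0)^{\min\{k_0,k_\infty\}}$ out of the second column and observes that $S(z_0)$ sends the nonzero value at $z_0$ of the remaining vector to a nonzero vector). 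In particular $k_D,k_N\ge1$, so $z_0$ is a common zero of $\phi_{D,q-1}$ and $\phi_{N,q-1}$, and moreover $z_0>0$ since $\phi({\bf l}_0,\cdot)$ is strictly positive on $(-\infty,0]$ by the proof of Theorem~\ref{thm:4.6}; hence $\sqrt{z_0}$ is a common eigenvalue of the Neumann problem (N1) and the Dirichlet problem (D1) on $\mathcal G'$.

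Second, if $q\ge3$ then $\mathcal G'$ has $q-1\ge2$ edges, so Theorem~\ref{thm:2.2} applies to it; its parts 1) and 2) — equivalently the fact recorded at the start of Subsection~\ref{subsec:2.1} that a common eigenvalue of (N1) and (D1) has Dirichlet multiplicity exactly one larger than its Neumann multiplicity — give $k_D=k_N+1$. Combined with $\min\{k_D,k_N\}=\min\{k_0,k_\infty\}$ this forces $k_N=\min\{k_0,k_\infty\}$ and $k_D=\min\{k_0,k_\infty\}+1$, which is the asserted multiplicity of $z_0$ in $\prod_{j=1}^{q-1}R_{2n_j}^{(j)}(z)=\phi_{D,q-1}(z)$. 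For the sum $\sum_{j=1}^{q-1}\big[R_{2n_j-1}^{(j)}(z)\prod_{k=1,k\ne j}^{q-1}R_{2n_k}^{(k)}(z)\big]$ one uses that, by \eqref{2.11}, it equals $\phi_{N,q-1}(z)+Mz\,\phi_{D,q-1}(z)$; since $z_0\ne0$ the last summand has order $k_D=k_N+1>k_N$ at $z_0$, so the sum has order $k_N=\min\{k_0,k_\infty\}$ at $z_0$, and hence so does $\phi_{N,q-1}$. The case $q=2$ is disposed of at once: then $\mathcal G'$ is a single edge, $\phi_{D,1}=R_{2n_1}^{(1)}$ and $\phi_{N,1}=R_{2n_1-1}^{(1)}-Mz\,R_{2n_1}^{(1)}$ have no common zero (their quotient $\phi^{(1)}$ is an $S_0$-function by Remark~\ref{rem:nev}), so by the first move no $z_0$ with $k_0,k_\infty\ge1$ exists and the theorem holds vacuously.

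For the inequality $k_0+k_\infty\le2q-3$ I would combine two bounds. On one hand, each $R_{2n_j}^{(j)}$ has only simple zeros (Remark~\ref{rem:nev}, Lemma~\ref{lem:2.1a}), so $\phi_{D,q-1}=\prod_{j=1}^{q-1}R_{2n_j}^{(j)}$ has zeros of multiplicity at most $q-1$, whence $\min\{k_0,k_\infty\}=k_N=k_D-1\le q-2$. On the other hand, by Theorem~\ref{thm:4.6} the function $\phi({\bf l}_0,z)/\phi(\infty,z)$ becomes an $S_0$-function after cancellation of common factors, and $S_0$-functions have only simple zeros and poles (Lemma~\ref{lem:2.1a}~iii)); cancelling the common factor $(z-z_0)^{\min\{k_0,k_\infty\}}$ therefore leaves at $z_0$ at most a simple zero or pole, i.e.\ $|k_0-k_\infty|\le1$, so $\max\{k_0,k_\infty\}\le\min\{k_0,k_\infty\}+1$. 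Adding, $k_0+k_\infty\le2\min\{k_0,k_\infty\}+1\le2(q-2)+1=2q-3$.

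The delicate point, and the only place where something could really go wrong, is the order count in the first move: it works precisely because $\det S(z)$ is a nonzero \emph{constant} — the force of Lemma~\ref{lem:4.4} — so that premultiplication by $S(z)$ neither creates nor destroys vanishing at $z_0$; this, together with the small bookkeeping step relating $\phi_{N,q-1}$ to the sum $\sum_j[R_{2n_j-1}^{(j)}\prod_{k\ne j}R_{2n_k}^{(k)}]$ via \eqref{2.11}, is where the argument has to be set up carefully. Everything else is a matter of quoting Theorem~\ref{thm:2.2} for the subgraph $\mathcal G'$ and the elementary multiplicity bounds above.
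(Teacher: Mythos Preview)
Your proof is correct. For the multiplicity identifications it is essentially the paper's argument in a cleaner wrapper: the paper derives $\min\{k_0,k_\infty\}=\min\{\kappa_D,\kappa_N\}$ by reading off one inequality from \eqref{4.22}--\eqref{4.23} and the reverse from \eqref{4.11}--\eqref{4.15}, and you package the same two relations as a single $2\times2$ polynomial system with constant nonzero determinant (the Lagrange identity), which makes the order-preservation transparent. Both then invoke Theorem~\ref{thm:2.2} on the $(q-1)$-edge subgraph for $k_D=k_N+1$.

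The genuine difference is in the bound $k_0+k_\infty\le 2q-3$. The paper combines $\min\{k_0,k_\infty\}\le q-2$ with $\max\{k_0,k_\infty\}\le q-1$, the latter imported from \cite[Theorem~6.3~(v)]{PW}. You instead use Theorem~\ref{thm:4.6} and Lemma~\ref{lem:2.1a}~iii) to get $|k_0-k_\infty|\le 1$ and add this to $2\min\{k_0,k_\infty\}\le 2(q-2)$. Your route is more self-contained --- it avoids the external citation --- and actually yields the extra information that $k_0$ and $k_\infty$ differ by at most~$1$, which the paper's argument does not isolate.
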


\begin{proof}
1) \ Denote by $\kappa_N$ and $\kappa_D$ the multiplicity of $z_0$ as a zero of the polynomials
$\mathop{\sum}_{j=1}^{q-1}R_{2n_j-1}^{(j)}(z)\mathop{\prod}_{k=1,k\not=j}^{q-1} R_{2n_k}^{(k)}(z)$ 
and of 
$\mathop{\prod}_{j=1}^{q-1} R_{2n_j}^{(j)}(z)$,
respectively.
By (\ref{4.22}), (\ref{4.23}), it follows that $\min\{k_0,k_\infty\} \le \min\{\kappa_N,\kappa_D\}$, while (\ref{4.11}), (\ref{4.15}) yield
$\min\{k_0,k_\infty\} \ge \min\{\kappa_N,\kappa_D\}$, so that altogether $\min\{k_0,k_\infty\} = \min\{\kappa_D,\kappa_N\}$.   
By Remark~\ref{rem:4.5} and by Theorem~\ref{thm:2.2} 2) on the multiplicities of Neumann and Dirichlet eigenvalues in Section \ref{subsec:2.1}, we know that $\kappa_D=\kappa_N+1$. This implies that $\kappa_N= \min\{k_0,k_\infty\}$ and $\kappa_D= \min\{k_0,k_\infty\}+1$.

2) \ According to \cite[Theorem 6.3 (v)]{PW}, we have $k_0\le q-1$ and $k_\infty \le q-1$  for a star graph of $q-1$ edges.
On the other hand, from Remark~\ref{rem:4.5} and Theorem~\ref{thm:2.2} 3), 2) for a star graph of $q-1$ edges, it follows that $\kappa_D\leq q-1$ and $\kappa_N = \kappa_D-1 \le q-2$. Then, by 1),  $\min\{k_0,k_\infty\}= \kappa_N \le q-2$ and thus $k_0+k_\infty \le 2q-3$.
\end{proof}

\begin{corollary}
\label{cor:4.4}
Let $z_0$ be a zero of multiplicity $k_\infty\geq 1$ 
of $\phi(\infty,z)$ and of multiplicity $k_0\geq 1$ of $\phi({\bf l}_0,z)$. 

\begin{enumerate}
\item[{\rm 1)}]
If $k_0\geq k_\infty$, then ${\bf R}_{2{\bf n}}(\infty,z_0)\not=0$ and
\begin{equation} \label{4.27} 
{\bf l}_0\frac{\phi({\bf l}_0,z_0)}{\phi(\infty,z_0)}
={\bf l}_0+\frac{1}{-{\bf m}_1z_0+\frac{1}{{\bf l}_1
+\frac{1}{-{\bf m}_2z_0+\dots+\frac{1}{{\bf l}_{{\bf n}-1}
+\frac{1}{-{\bf m}_{\bf n}z_0+\frac{1}{{\bf l}_{\bf n}}}}}}}.
\end{equation}
\item[{\rm 2)}] If $k_0 \leq k_\infty$, then ${\bf R}_{2{\bf n}}({\bf l}_0,z_0)\not=0$ and
\begin{equation} \label{4.28}
\frac 1{{\bf l}_0}
\frac{\phi(\infty, z_0)}{\phi({\bf
l}_0,z_0)}=\frac{1}{{\bf l}_0+\frac{1}{-{\bf m}_1z_0+\frac{1}{{\bf
l}_1+\frac{1}{-{\bf m}_2z_0+\dots +\frac{1}{{\bf l}_{{\bf
n}-1}+\frac{1}{-{\bf m}_{\bf n}z_0+\frac{1}{{\bf l}_{\bf
n}}}}}}}}.
\end{equation}
\end{enumerate}
\end{corollary}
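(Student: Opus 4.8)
The plan is to derive \eqref{4.27} and \eqref{4.28} by evaluating at $z=z_0$ the two rearrangements of \eqref{4.22} already obtained in the proof of Proposition~\ref{prop:4.3}, namely identity \eqref{4.24}
\[
{\bf l}_0\frac{\phi({\bf l}_0,z)}{\phi(\infty,z)}-{\bf l}_0\frac{{\bf R}_{2{\bf n}}({\bf l}_0,z)}{{\bf R}_{2{\bf n}}(\infty,z)}=\frac{\prod_{j=1}^{q-1}R_{2n_j}^{(j)}(z)}{{\bf R}_{2{\bf n}}(\infty,z)\,\phi(\infty,z)}
\]
and its counterpart \eqref{4.29} with the roles of ${\bf l}_0$ and $\infty$ exchanged. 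Throughout, the ratios are read as reduced rational functions and their values at $z_0$ as limits $z\to z_0$. By \eqref{4.26}, the subtracted term ${\bf l}_0{\bf R}_{2{\bf n}}({\bf l}_0,z)/{\bf R}_{2{\bf n}}(\infty,z)$ is exactly the continued fraction on the right of \eqref{4.27} (and its reciprocal is the one in \eqref{4.28}), so it suffices to show, in case~1), that ${\bf R}_{2{\bf n}}(\infty,z_0)\neq0$ and that the right-hand side of \eqref{4.24} vanishes at $z_0$, and, symmetrically in case~2), the same for ${\bf R}_{2{\bf n}}({\bf l}_0,z_0)$ and \eqref{4.29}.

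First I would record two facts. By the Lagrange identity \eqref{4.21} with $k={\bf n}$, the polynomials ${\bf R}_{2{\bf n}}({\bf l}_0,\cdot)$ and ${\bf R}_{2{\bf n}}(\infty,\cdot)$ cannot vanish simultaneously; write $a$ and $b$ for their orders of vanishing at $z_0$, so $\min\{a,b\}=0$. By Theorem~\ref{thm:4.2}, $\phi_{D,q-1}(z)=\prod_{j=1}^{q-1}R_{2n_j}^{(j)}(z)$ vanishes to order \emph{exactly} $\min\{k_0,k_\infty\}+1$ at $z_0$. Comparing orders in \eqref{4.22}, the products $\phi({\bf l}_0,z){\bf R}_{2{\bf n}}(\infty,z)$ and $\phi(\infty,z){\bf R}_{2{\bf n}}({\bf l}_0,z)$ vanish at $z_0$ to orders $k_0+b$ and $k_\infty+a$ respectively. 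In case~1) ($k_0\ge k_\infty$) one has $\min\{k_0,k_\infty\}+1=k_\infty+1$; if $b>0$ then $a=0$, so these orders are $k_0+b\ge k_\infty+1>k_\infty$ and $k_\infty$, whence their difference vanishes to order exactly $k_\infty$, contradicting the order $k_\infty+1$ of $\phi_{D,q-1}$. Hence ${\bf R}_{2{\bf n}}(\infty,z_0)\neq0$, so the denominator ${\bf R}_{2{\bf n}}(\infty,z)\phi(\infty,z)$ in \eqref{4.24} has order $k_\infty$ at $z_0$ while the numerator $\phi_{D,q-1}(z)$ has order $k_\infty+1$; thus the right-hand side of \eqref{4.24} vanishes at $z_0$, and evaluating \eqref{4.24} at $z_0$ together with \eqref{4.26} gives \eqref{4.27}. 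Case~2) ($k_0\le k_\infty$) is symmetric: the same comparison, now assuming $a>0$ (hence $b=0$), yields a contradiction, so ${\bf R}_{2{\bf n}}({\bf l}_0,z_0)\neq0$; then the right-hand side of \eqref{4.29} has numerator of order $k_0+1$ and denominator of order $k_0$, hence vanishes at $z_0$, and evaluating \eqref{4.29} at $z_0$ with \eqref{4.26} yields \eqref{4.28}.

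The main obstacle, I expect, is precisely this order bookkeeping: the step producing ${\bf R}_{2{\bf n}}(\infty,z_0)\neq0$ genuinely uses the \emph{exact} multiplicity $\min\{k_0,k_\infty\}+1$ of $\phi_{D,q-1}$ at $z_0$ from Theorem~\ref{thm:4.2} (a lower bound would not suffice), together with the non-simultaneous vanishing of ${\bf R}_{2{\bf n}}({\bf l}_0,\cdot)$ and ${\bf R}_{2{\bf n}}(\infty,\cdot)$ from \eqref{4.21}; everything else is routine. A minor point worth flagging is that when $k_0\neq k_\infty$ one of the quantities ${\bf l}_0\phi({\bf l}_0,z_0)/\phi(\infty,z_0)$ in \eqref{4.27}--\eqref{4.28} equals $0$ or $\infty$, so these identities are to be understood as equalities of values of the reduced rational functions (with the convention $1/\infty=0$); the non-vanishing of ${\bf R}_{2{\bf n}}(\infty,z_0)$, resp.\ ${\bf R}_{2{\bf n}}({\bf l}_0,z_0)$, ensures the side actually written down is well defined.
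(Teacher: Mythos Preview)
Your proof is correct and follows essentially the same route as the paper: use Theorem~\ref{thm:4.2} to force the non-vanishing of the relevant ${\bf R}_{2{\bf n}}$, then read off \eqref{4.27}/\eqref{4.28} from \eqref{4.24}/\eqref{4.29} combined with \eqref{4.26}. The only minor difference is in how the contradiction is set up: the paper plugs the assumption ${\bf R}_{2{\bf n}}(\infty,z_0)=0$ into \eqref{4.15} (in the form $\phi(\infty,z)={\bf R}_{2{\bf n}}(\infty,z)\phi_{N,q-1}(z)+{\bf R}_{2{\bf n}-1}(\infty,z)\phi_{D,q-1}(z)$) and uses both multiplicities from Theorem~\ref{thm:4.2}, whereas you count orders directly in \eqref{4.22} together with the Lagrange identity \eqref{4.21} and need only the $\phi_{D,q-1}$ multiplicity---but the two arguments are interchangeable.
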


\begin{proof}
1) If $k_0\geq k_\infty$ and ${\bf R}_{2{\bf n}}(\infty,z_0)=0$, then Theorem~\ref{thm:4.2}  
shows that, in equation (\ref{4.15}), the multiplicity of the zero $z_0$ is $k_\infty$ on the left hand side and $k_\infty+1$ on the right hand side, a contradiction. Hence ${\bf R}_{2{\bf n}}(\infty,z_0)\not=0$. As a consequence, $z_0$ is a zero of the right hand side of (\ref{4.24}) because the multiplicity is $k_\infty+1$ in the numerator and $k_\infty$ in the denominator. Thus $z_0$ is also a zero of the left hand side of (\ref{4.24}) and formula (\ref{4.27}) follows from (\ref{4.26}). 

2) 
The proof of 2) is similar to that of 1) if we use (\ref{4.11}) and (\ref{4.29}) instead of (\ref{4.15}) and (\ref{4.24}), respectively.
\end{proof}


\begin{theorem}
\label{thm:4.??}
The eigenvalues 
$\{\mu_k\}_{k=-n,\,k\neq 0}^{n}$, $\mu_{-k}=-\mu_k$,  of the Neumann problem~{\rm (N2)} 
and the eigenvalues  $\{\lambda_k\}_{k=-n, \,k\neq 0}^n$, $\lambda_{-k}=-\lambda_k$, of the Dirichlet problem {\rm (D2)} 
have the following properties:

\begin{enumerate}
\item[{\rm 1)}] 
$ 0<\mu_1<\lambda_1\leq\mu_2\leq \dots\leq\mu_n \le \lambda_n$;
\vspace{1mm}
\item[{\rm 2)}] 
the multiplicities of $\mu_k$ and $\lambda_k$ do not exceed $q-1$; 
if $\mu_k=\lambda_k$ $($or $=\lambda_{k+1})$, 
then the sum of multiplicities of $\mu_k$ and $\lambda_k$ $($or $\lambda_{k+1})$ is~$\le 2q-3$.
\vspace{1mm}
%
%
\item[{\rm 3)}] 
if $\,\mu_k=\lambda_k$ $($or $=\lambda_{k+1})$, then $\mu_k$ is a zero of $\displaystyle{ \phi_{q-1}(z) = \frac{\phi_{D,q-1}(z)}{\phi_{N,q-1}(z)} }$.
\end{enumerate}
\end{theorem}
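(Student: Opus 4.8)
The plan is to deduce all three statements from the $S_0$-function machinery of Lemma~\ref{lem:2.1a} applied to $F(z):=\phi({\bf l}_0,z)/\phi(\infty,z)$, together with Theorems~\ref{thm:4.6} and~\ref{thm:4.2} and the corresponding results of Subsection~\ref{subsec:2.1}. Recall that, counted with multiplicity, the zeros of the characteristic polynomial $\phi({\bf l}_0,z)$ of (D2) are the $\lambda_k^2$ and those of $\phi(\infty,z)$ of (N2) are the $\mu_k^2$, both polynomials having degree $n$. By Theorem~\ref{thm:4.6}, $F$ becomes an $S_0$-function after cancellation of common factors, and by Corollary~\ref{cor:4.2} its limit at $\infty$ is $a_0>0$, so by Lemma~\ref{lem:2.1a} the surviving poles $\alpha_1<\dots<\alpha_p$ (a sub-multiset of $\{\mu_k^2\}$) and surviving zeros $\beta_1<\dots<\beta_p$ (a sub-multiset of $\{\lambda_k^2\}$) are simple with $0<\alpha_1<\beta_1<\dots<\alpha_p<\beta_p$. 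A first consequence I would record is that if $z_0$ is a common zero of $\phi({\bf l}_0,\cdot)$ and $\phi(\infty,\cdot)$, of multiplicities $k_0\ge1$ and $k_\infty\ge1$, then $k_0-k_\infty$ is the order of $z_0$ as a zero of the $S_0$-function $F$, hence $|k_0-k_\infty|\le1$.

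I would prove 3) and 2) first, as they are short. For 3): that $\mu_k$ equals some Dirichlet eigenvalue $\lambda_j$ means precisely that $z_0:=\mu_k^2=\lambda_j^2$ is a common zero, so Theorem~\ref{thm:4.2} applies and shows $z_0$ is a zero of $\phi_{D,q-1}$ of multiplicity $\min\{k_0,k_\infty\}+1$ and of $\phi_{N,q-1}$ of multiplicity $\min\{k_0,k_\infty\}$; hence $z_0$ is a zero of $\phi_{q-1}=\phi_{D,q-1}/\phi_{N,q-1}$, which is~3). For 2): the problem (D2) coincides with the Neumann problem (N1) on the $q$-edge star graph (Remark~\ref{rem:3.1}), so Theorem~\ref{thm:2.2}~3) bounds each multiplicity of $\lambda_k$ by $q-1$; the problem (N2) is precisely the modified problem (N1$'$) of Remark~\ref{rem:2.6}, so that remark bounds each multiplicity of $\mu_k$ by $q-1$; and for a common eigenvalue the multiplicity of $\mu_k$ is $k_\infty$ and that of $\lambda_j$ is $k_0$, so Theorem~\ref{thm:4.2} gives $k_0+k_\infty\le2q-3$.

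For 1), I would pass to $\widetilde F(\lambda):=\lambda\,F(\lambda^2)=\lambda\,\phi({\bf l}_0,\lambda^2)/\phi(\infty,\lambda^2)$. By Theorem~\ref{thm:4.6} and \cite[Lemma~S1.5.1~(2)]{KK1}, $\widetilde F$ becomes a rational Nevanlinna function after cancellation of common factors, hence has simple, strictly interlacing poles and zeros; its zeros are $0$ and the surviving $\pm\lambda_k$, its poles the surviving $\pm\mu_k$. Since $0$ is a zero and $\widetilde F$ is odd, $\widetilde F$ is increasing near $0$, so on the positive half-line the surviving poles and zeros strictly alternate starting with a pole; thus the surviving $\mu$'s and $\lambda$'s interlace strictly with the least among them a $\mu$. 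Re-inserting the common zeros then gives~1): a common zero $z_0=c^2$ contributes $k_0$ copies of $c$ to $\{\lambda_k\}_{k=1}^{n}$ and $k_\infty$ copies to $\{\mu_k\}_{k=1}^{n}$ with $|k_0-k_\infty|\le1$, and since $\deg\phi({\bf l}_0,\cdot)=\deg\phi(\infty,\cdot)=n$ the defects balance ($\sum_{z_0}(k_0-k_\infty)=0$), so inserting these equal-value blocks into the strictly alternating surviving chain produces exactly $0<\mu_1\le\lambda_1\le\mu_2\le\dots\le\mu_n\le\lambda_n$. Finally, to upgrade the first inequality to $\mu_1<\lambda_1$: if $\mu_1=\lambda_1$ then $\mu_1^2$ is a common zero, so by Theorem~\ref{thm:4.2} (through Remark~\ref{rem:4.5}) $\mu_1^2$ is a zero of $\phi_{D,q-1}(z)=\prod_{j=1}^{q-1}R_{2n_j}^{(j)}(z)$ of multiplicity at least $2$; since each $R_{2n_j}^{(j)}$ has only simple zeros (Remark~\ref{rem:nev}), $\mu_1^2$ would be a Dirichlet eigenvalue of at least two sub-edges, forcing $\mu_1^2\ge\nu_1^{(j)\,2}$ for some $j$, which contradicts the fact that the lowest eigenvalue of (N2) lies strictly below the lowest eigenvalue of the decoupled Dirichlet problem on any sub-edge (a proper restriction of (N2)).

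The parts driven purely by the $S_0$/Nevanlinna formalism --- the first paragraph and items 3) and 2) --- are immediate once Theorems~\ref{thm:4.6},~\ref{thm:4.2},~\ref{thm:2.2} and Remark~\ref{rem:2.6} are in hand. The genuine obstacle lies in 1): the combinatorial bookkeeping that turns the strictly interlacing surviving part plus the common-zero blocks into the precise non-strict chain with correct indexing, and, above all, the strict inequality $\mu_1<\lambda_1$, which is the one point where the $S_0$-machinery alone is insufficient and the graph structure (Theorem~\ref{thm:4.2}) together with a monotonicity/restriction argument for the lowest eigenvalue must be used.
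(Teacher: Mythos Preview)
Your treatment of 2), 3), and the non-strict interlacing in 1) is correct and coincides with the paper's proof: both invoke Theorem~\ref{thm:4.6} for the $S_0$-property of $\phi({\bf l}_0,z)/\phi(\infty,z)$, Theorem~\ref{thm:4.2} for the multiplicity relation at a common eigenvalue, and Theorem~\ref{thm:2.2}~3) together with Remark~\ref{rem:2.6} for the individual bounds $\le q-1$.

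The divergence from the paper, and the only genuine gap, is in your argument for the strict inequality $\mu_1<\lambda_1$. You assert that ``the lowest eigenvalue of (N2) lies strictly below the lowest eigenvalue of the decoupled Dirichlet problem on any sub-edge'', but this strictness is not established anywhere in the paper and is not a free consequence of the restriction principle: Cauchy interlacing (or the Rayleigh--Ritz argument you allude to) only gives $\mu_1^2\le\nu_1^{(j)\,2}$. To close the gap you would still need to show that the lowest Dirichlet eigenfunction on edge $j$, extended by zero, cannot be an (N2)-eigenfunction; this amounts to checking that the Kirchhoff condition forces $u_{n_j}^{(j)}=0$, which contradicts the sign-definiteness of the ground state of a Jacobi matrix --- a fact from oscillation theory that, while true, is neither stated nor proved here. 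The paper avoids this entirely and takes a purely function-theoretic route: if $\mu_1=\lambda_1$, then by Theorem~\ref{thm:4.2} the value $\mu_1^2$ is a zero of $\big(\sum_{j=1}^{q-1}1/\phi^{(j)}(z)\big)^{-1}$, which by Proposition~\ref{prop:4.3} is a tail (beyond level ${\bf n}$) of the continued fraction expansion of ${\bf l}_0\,\phi({\bf l}_0,z)/\phi(\infty,z)$; the monotonicity of smallest zeros along successive tails in Lemma~\ref{lem:2.1a}~v) then forces $\mu_1^2\ge\beta_1^0>\alpha_1^0\ge\mu_1^2$, a contradiction. That argument stays entirely inside the $S_0$-machinery already developed and requires no additional spectral-theoretic input.
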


\begin{proof}
1) It was shown above that the sets $\{\mu_k^2\}_{k=-n,\,k\neq 0}^{n+1}$ and $\{\lambda_k^2\}_{k=-n, k\neq 0}^n$ are the poles and zeros, respectively, of the function
\[
\frac{\phi({\bf l}_0,z)}{\phi(\infty,z)}
\] 
which becomes an $S_0$-function by Theorem~\ref{thm:4.6} after cancellation of common factors (if any) in the numerator and the denominator. 
This proves 1) except for the strict inequality $\mu_1<\lambda_1$ therein.  

3) The last property is immediate from Theorem~\ref{thm:4.2} since the multiplicity in the numerator is larger (by $1$) than the multiplicity in the denominator.

2) The eigenvalues $\lambda_k$ of (D2) coincide with the eigenvalues of problem (N1) and hence the first claim for $\lambda_k$ 
follows from Theorem~\ref{thm:2.2} 3). The eigenvalues $\nu_k$ coincide with the eigenvalues of problem (N1') and hence the 
first claim for $\nu_k$ follows from Remark~\ref{rem:2.6}. The second claim follows from 3) and Theorem~\ref{thm:4.2}.

It remains to be proved that $\mu_1<\lambda_1$ in 1). Denote by $0< \alpha_1^j < \beta_1^j < \dots $ the strictly interlacing poles and zeros of
the $j$-th tail of the continued fraction expansion of ${\bf l}_0\frac{\phi({\bf l}_0,z)}{\phi(\infty,z)}$ in (\ref{vienna10}) (i.e.\ for $j=0$ the poles and zeros of ${\bf l}_0\frac{\phi({\bf l}_0,z)}{\phi(\infty,z)}$ after cancellation of common factors). Then $\mu_1=\alpha_1^0$. 
Suppose now that $\mu_1=\lambda_1$. Then Theorem~\ref{thm:4.2} implies that $\mu_1$ is a zero of
\[
   \frac{\mathop{\prod}\limits_{j=1}^{q-1} R_{2n_j}^{(j)}(z)}
        {\mathop{\sum}\limits_{j=1}^{q-1} \Big[ R_{2n_j-1}^{(j)}(z)\!\!\mathop{\prod}\limits_{k=1,k\not=j}^{q-1} R_{2n_k}^{(k)}(z) \Big]} 
   = \frac 1{\sum\limits _{j=1}^{q-1} \displaystyle{\frac {R_{2n_j-1}^{(j)}(z)}{R_{2n_j}^{(j)}(z)}}} 
   = \frac 1{\sum\limits _{j=1}^{q-1} \displaystyle{\frac 1{ \phi^{(j)}(z)}}}.
\]
Thus, by (\ref{vienna9}), $\mu_1$ is a zero of the $({\bf n}+1)$-th tail of the continued fraction for ${\bf l}_0\frac{\phi({\bf l}_0,z)}{\phi(\infty,z)}$ in (\ref{vienna9}) and hence $\mu_1 = \beta_k^{{\bf n}+1} > \beta_1^{{\bf n}+1}$ for some $k=1,2,\dots.$ Since the smallest zero of every tail of a continued fraction is greater or equal than the smallest zero of the continued fraction itself by Lemma~\ref{lem:2.1a} v), we arrive at the contradiction 
\[
  \mu_1  \ge \beta_1^{{\bf n}+1} \ge \beta_1^0 > \alpha_1^0= \lambda_1. \qedhere
\]
\end{proof}

\smallskip

\subsection{Inverse spectral problem for a star graph with root at a pendant vertex}

In this subsection we investigate the inverse problem of recovering the  
distribution of masses on the star graph from the two spectra of the Neumann problem (N2) and the Dirichlet problem (D2)
together with the lengths ${\bf l}$ and $l_j$ of the separate strings.

More precisely, suppose that $q \in \N$ ($q\ge 2$), is fixed and a set of lengths~${\bf l}, l_j >0$ $(j=1,2, \dots, q-1)$ as well as sets 
$\{\mu_k\}_{k=-n,k\neq 0}^n$,  $\{\lambda_k\}_{k=-n,k\neq 0}^{n} \subset \R$ 
are given.
Under which conditions can we determine numbers ${\bf n}$, $n_j\in \N_0$ ($j=1,2, \dots,q-1)$, sets of masses $\big\{m_k^{(j)}\big\}_{k=1}^{n_j} \cup \{M\}$ and of lengths $\big\{l_k^{(j)}\big\}_{k=0}^{n_j}$ of the intervals between them so that the corresponding star graph has the sequences
$\{\mu_k\}_{k=-n,k\neq 0}^n$, $\{\lambda_k\}_{k=-n,k\neq 0}^{n}$ as Neumann and Dirichlet eigenvalues, respectively?



\begin{lemma}
\label{lem:5.1}
Let $q \in \N$, $q \ge 2$, $\{ {\bf l}\} \cup \{l_j\}_{j=1}^{q-1}  \subset (0,\infty)$, $n\in \N$. Suppose that $\{\mu_k\}_{k=1}^n$, $\{\lambda_k\}_{k=1}^n\subset \R$ are such that
\begin{equation}
\label{vienna1}
 0<\mu_1\le\lambda_1\leq\mu_2\leq \dots\leq\mu_n \le\lambda_n,
\end{equation} 
and let
\begin{equation}
\label{vienna2}
\Phi(z):= \gamma \,\frac{\mathop{\prod}\limits_{k=1}^n \Big(1-\frac{z}{\lambda_k^2}\Big)}{\mathop{\prod}\limits_{k=1}^n \Big( 1-\frac{z}{\mu_k^2}\Big)}
, \qquad \gamma:={\bf l}+\biggl(\mathop{\sum}\limits_{k=1}^{q-1}\frac 1{l_k}\biggr)^{-1}.
\end{equation}
Then there exist unique $p$, ${\bf n} \in \N$ with ${\bf n} \le p$ 
and
\begin{equation}
\label{vienna4}
 \sum_{k=0}^{{\bf n}-1} a_k < {\bf l}, \quad \sum_{k=0}^{\bf n} a_k \ge {\bf l},
\end{equation}
as well as $a_k>0$ $(k=0,1,\dots,p)$, $b_k >0$ $(k=1,2,\dots,p)$, and 
$a_{\bf n}^1 \ge 0$ such that
\begin{equation}
\label{vienna3}
\Phi(z)= a_0+\frac{1}{-b_1z+\frac{1}{a_1+\frac{1}{-b_2z+\dots +\frac{1}{a_{{\bf n}-1}
+\frac{1}{-b_{\bf n}z+\frac{1}{a_{\bf n}-a_{\bf n}^1+\widehat f_{\bf n}(z)}}}}}}
\vspace{-4mm}
\end{equation}
with $a_{\bf n}^1 :=  \sum\limits_{k=0}^{\bf n} a_k  - {\bf l} \ge 0$ and
\begin{equation}
\label{5.2}
\widehat f_{\bf n}(z):=a_{\bf n}^1+\frac{1}{-b_{{\bf n}+1}z+\frac{1}{a_{{\bf
n}+1}+\frac{1}{-b_{{\bf n}+2}z+\dots +\frac{1}{a_{ p-1}+\frac{1}{-b_pz+\frac{1}{a_p}}}}}}. 
\end{equation}
\end{lemma}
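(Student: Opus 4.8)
The plan is to expand $\Phi$ as a finite continued fraction of the form \eqref{vienna15a}, to recognise $\Phi(0)=\gamma$ as the sum of its ``constant'' coefficients $a_k$, and then to cut this sum at the first place where it reaches ${\bf l}$; the overshoot will be $a_{\bf n}^1$.

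First I would reduce the rational function $\Phi$ from \eqref{vienna2} to lowest terms. The chain \eqref{vienna1} forces that whenever two consecutive $\mu_k$ coincide the $\lambda_k$ between them coincides with them too, and symmetrically; hence for each real $v$ its multiplicity as a $\mu$-value and its multiplicity as a $\lambda$-value differ by at most one. Consequently, after cancelling common factors, numerator and denominator of $\Phi$ have only simple roots, these roots strictly interlace, the smallest remaining pole is positive and smaller than the smallest remaining zero, and numerator and denominator have equal degree $p\le n$. Since $\lim_{z\to\infty}\Phi(z)=\gamma\prod_k\mu_k^2/\lambda_k^2>0$, the converse part of Lemma~\ref{lem:2.1a} shows that $\Phi$ is a rational $S_0$-function with $a_0:=\lim_{z\to\infty}\Phi(z)>0$; then Lemma~\ref{lem:2.1a}\,i) yields a unique expansion of $\Phi$ in the form \eqref{vienna15a} with this $p$, and with $a_k>0$ $(k=0,\dots,p)$, $b_k>0$ $(k=1,\dots,p)$.

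Next I would put $z=0$ in that expansion. All terms $-b_kz$ then vanish and, using $1/(1/x)=x$ repeatedly, the continued fraction telescopes to $\Phi(0)=\sum_{k=0}^{p}a_k$; on the other hand $\Phi(0)=\gamma={\bf l}+\bigl(\sum_{k=1}^{q-1}1/l_k\bigr)^{-1}>{\bf l}>0$ straight from \eqref{vienna2}. The partial sums $S_m:=\sum_{k=0}^{m}a_k$ increase strictly, with $S_{-1}=0<{\bf l}$ and $S_p=\gamma>{\bf l}$, so there is a unique ${\bf n}\in\{1,\dots,p\}$ with $\sum_{k=0}^{{\bf n}-1}a_k<{\bf l}\le\sum_{k=0}^{{\bf n}}a_k$, which is exactly \eqref{vienna4}. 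Putting $a_{\bf n}^1:=\sum_{k=0}^{{\bf n}}a_k-{\bf l}$, the right inequality in \eqref{vienna4} gives $a_{\bf n}^1\ge 0$ and the left one gives $a_{\bf n}^1<a_{\bf n}$ (because $\sum_{k=0}^{{\bf n}-1}a_k=\sum_{k=0}^{{\bf n}}a_k-a_{\bf n}$), hence $a_{\bf n}-a_{\bf n}^1>0$. Finally I would split the ${\bf n}$-th constant denominator of \eqref{vienna15a} as $a_{\bf n}=(a_{\bf n}-a_{\bf n}^1)+a_{\bf n}^1$ and absorb the summand $a_{\bf n}^1$ into the tail; this rewrites \eqref{vienna15a} precisely as \eqref{vienna3} with $\widehat f_{\bf n}$ given by \eqref{5.2}, all positivity and sign assertions being those collected above. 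Uniqueness of $p$ and of the $a_k,b_k$ is the uniqueness in Lemma~\ref{lem:2.1a}\,i); uniqueness of ${\bf n}$, and then of $a_{\bf n}^1$, follows from the strict monotonicity of the $S_m$.

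The step requiring care is the reduction: one must verify that the merely non-strict interlacing \eqref{vienna1} collapses, after cancellation, to a genuine $S_0$-configuration (simple, strictly interlacing poles and zeros, first pole positive and below the first zero). This multiplicity bookkeeping is, however, the same as the one already used for $\Psi_q$ in the proof of Theorem~\ref{thm:2.6}. The genuinely new ingredient is the identity $\Phi(0)=\gamma=\sum_{k=0}^{p}a_k$: it is this particular value of $\gamma$ that makes the cut-off index ${\bf n}$ of \eqref{vienna4} well defined and $\le p$, and that forces $a_{\bf n}^1=\sum_{k=0}^{{\bf n}}a_k-{\bf l}$, so that in the sequel the non-branching block $a_0,b_1,a_1,\dots,b_{\bf n},a_{\bf n}-a_{\bf n}^1$ of \eqref{vienna3} can be identified with the lengths and masses ${\bf l}_0,{\bf m}_1,{\bf l}_1,\dots,{\bf m}_{\bf n},{\bf l}_{\bf n}$ of the main string, as in Proposition~\ref{prop:4.3}.
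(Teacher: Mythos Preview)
Your proof is correct and follows essentially the same route as the paper's: cancel common factors in $\Phi$ so that Lemma~\ref{lem:2.1a} applies and gives the unique continued fraction expansion with $a_0>0$, then use $\Phi(0)=\gamma=\sum_{k=0}^p a_k>{\bf l}$ to locate the cut-off index ${\bf n}$. You supply somewhat more detail than the paper (the multiplicity bookkeeping behind the reduction, the telescoping evaluation at $z=0$, and the observation $a_{\bf n}-a_{\bf n}^1>0$), but the argument is the same.
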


\begin{proof}
Choose $p\in \N$ such that $n-p$ is the (maximal) number of common factors in the numerator and denominator of $\Phi$. After cancellation of these common factors, there are subsets $\{\widetilde\lambda_k\}_{k=-p,\, k\not=0}^p\subset\{\lambda_k\}_{k=-n,\ k\not=0}^n$ and 
$\{\widetilde \mu_k\}_{k=-p,\ k\not=0}^p\subset\{\mu_k\}_{k=-n,\, k\not=0}^n$ 
with $\widetilde\lambda_k<\widetilde\lambda_{k'}$, $\widetilde \mu_k<\widetilde \mu_{k'}$ for $k<k'$ such that
$$
\Phi(z)
=\gamma\,\frac{\mathop{\prod}\limits_{k=1}^p\Big(1-\frac{z}{{\widetilde \lambda}_k^2}\Big)}{\mathop{\prod}\limits_{k=1}^p\Big(1-\frac{z}{{\widetilde \mu}_k^2}\Big)}.
$$
Then $0< \widetilde \mu_1 < \widetilde \lambda_1 < \dots < \widetilde \mu_p < \widetilde \lambda_p$ and hence $\Phi$ has become an $S_0$-function by Lemma \ref{lem:2.1a} and $\lim_{z\to\infty} \Phi(z)\ne 0$. Thus there are unique $a_k>0$ $(k=0,1,\dots,p)$, $b_k >0$ $(k=1,2,\dots,p)$~with
$$
\Phi(z)= a_0+\frac{1}{-b_1z+\frac{1}{a_1+\frac{1}{-b_2z+\dots +\frac{1}{a_{p-1}
+\frac{1}{-b_pz+\frac{1}{a_p}}}}}}.
$$
Since we have $\sum\limits_{k=0}^p a_k = \Phi(0)=  \gamma > {\bf l}$, there exists an ${\bf n} \in \N$ such that \eqref{vienna4} holds
and all claims follow. 
\end{proof}

\begin{theorem}
\label{thm:5.2}
Let $q \in \N$, $q \ge 2$, $\{ {\bf l}\}  \cup \{l_j\}_{j=1}^{q-1}\subset (0,\infty)$, $n\in \N$, and suppose that $\{\mu_k\}_{k=-n,\,k\neq 0}^n$, $\{\lambda_k\}_{k=-n,\,k\neq 0}^n\subset \R$ are such that

\begin{enumerate}
\item[{\rm 0)}] $\mu_{-k}=-\mu_k,\quad \lambda_{-k}=-\lambda_k$; 
\item[{\rm 1)}] 
$ 0<\mu_1<\lambda_1\leq\mu_2\leq \dots\leq \mu_n\le \lambda_n$;
\item[{\rm 2)}] 
the multiplicities of $\mu_k$ in $\{\mu_k\}_{k=-n,\,k\neq 0}^n$ and of $\lambda_k$  in $\{\lambda_k\}_{k=-n,\,k\neq 0}^n$ do not exceed $q-1$; 
\item[{\rm 3)}] 
if $\,\mu_k=\lambda_k$ $($or $=\lambda_{k+1})$, then $\widehat f_{\bf n}(\lambda_k^2)=0$ with $\widehat f_{\bf n}$ defined as in Lemma~{\rm \ref{lem:5.1}} in {\rm (\ref{vienna3}),~(\ref{5.2})}.
\end{enumerate}

\noindent
Then there exists a star graph of $q$ Stieltjes strings, 
i.e.\ numbers $\{ {\bf n}\}, \{n_j\}_{j=1}^{q-1} \subset \N_0$, masses $\{ {\bf m}_k\}_{k=1}^{\bf n}$, $\{m_k^{(j)}\}_{k=1}^{n_j} \!\subset\! (0,\infty)$, $M\in[0,\infty)$, and interval lengths $\{{\bf l}_k\}_{k=1}^{\bf n}$, $\{l_k^{(j)}\}_{k=0}^{n_j} \subset (0,\infty)$ $(j=1,2,\ldots, q-1)$
between them with $\mathop{\sum}_{k=1}^{\bf n}\!{\bf l}_k\! = {\bf l}$, $\sum_{k=0}^{n_j}\!l_k^{(j)}\!=l_j$, and
\[
 \left\{  
 \begin{array}{lll}
 M=0, \ \ n={\bf n} + \sum\limits_{j=1}^{q-1} n_j & \quad  \mbox{\it if } \, a_{\bf n}^1>0,\\
 M>0, \ \ n={\bf n} + \sum\limits_{j=1}^{q-1} n_j +1 & \quad  \mbox{\it if } \, a_{\bf n}^1=0,
 \end{array}
 \right. 
\]
with $a_{\bf n}^1$ as defined in Lemma~{\rm \ref{lem:5.1}}, 
so that the Neumann problem {\rm (N2)} in
{\rm (\ref{4.1})--(\ref{4.5}), (\ref{4.7})} 
has the eigenvalues $\{\mu_k\}_{k=-n,\ k\not=0}^n$ and the
Dirichlet problem {\rm (\ref{4.1})--(\ref{4.5}), (\ref{4.6})} has the eigenvalues $\{\lambda_k\}_{k=-n,\ k\not=0}^n$. 
\end{theorem}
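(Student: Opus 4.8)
The plan is to run the branching continued fraction expansion of Proposition~\ref{prop:4.3} backwards: first peel the main edge off $\Phi$ via Lemma~\ref{lem:5.1}, then hand the remaining tail to the first inverse theorem, Theorem~\ref{thm:2.6}. Concretely, I would form $\Phi(z)$ as in \eqref{vienna2} and apply Lemma~\ref{lem:5.1}, obtaining $p$, ${\bf n}\le p$, positive numbers $a_0,\dots,a_p$ and $b_1,\dots,b_p$, the number $a_{\bf n}^1\ge 0$, the tail $\widehat f_{\bf n}$ of \eqref{5.2}, and the representation \eqref{vienna3}. I would then set ${\bf m}_k:=b_k$ $(k=1,\dots,{\bf n})$ and
\[
 {\bf l}_k:=a_k \quad(k=0,1,\dots,{\bf n}-1), \qquad {\bf l}_{\bf n}:=a_{\bf n}-a_{\bf n}^1,
\]
which are all positive and satisfy $\sum_{k=0}^{\bf n}{\bf l}_k={\bf l}$ by \eqref{vienna4}. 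By \eqref{vienna7} these are exactly the parameters whose polynomials ${\bf R}_k({\bf l}_0,z)$, ${\bf R}_k(\infty,z)$ obey the recurrences \eqref{4.8}--\eqref{4.10}, so that \eqref{vienna3} agrees with the branching fraction \eqref{vienna9} of any star graph carrying these main-edge parameters down to the tail; this reduces the theorem to realising $\widehat f_{\bf n}(z)^{-1}=-Mz+\sum_{j=1}^{q-1}1/\phi^{(j)}(z)$ by a star graph of $q-1$ edges with central mass $M$.

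Next I would analyse the tail. Being a tail of the continued fraction of an $S_0$-function, $\widehat f_{\bf n}$ is by Lemma~\ref{lem:2.1a} again an $S_0$-function if $a_{\bf n}^1>0$, and equals $1/\bigl(-b_{{\bf n}+1}z+g(z)\bigr)$ with $g$ an $S_0$-function if $a_{\bf n}^1=0$. Hence $\widehat f_{\bf n}(z)^{-1}=-Mz+\Psi(z)$, where $M:=0$ if $a_{\bf n}^1>0$ and $M:=b_{{\bf n}+1}>0$ if $a_{\bf n}^1=0$, and where $-\Psi$ is a Nevanlinna function that is bounded at $\pm\infty$, strictly positive on $(-\infty,0]$, with $\Psi(0)=\widehat f_{\bf n}(0)^{-1}=\sum_{j=1}^{q-1}1/l_j$ (using $\widehat f_{\bf n}(0)=\Phi(0)-\sum_{k=0}^{\bf n}{\bf l}_k=\gamma-{\bf l}$). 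Thus $\widehat f_{\bf n}(z)^{-1}$ has exactly the algebraic form of the function $\phi_{q-1}^{-1}$ in \eqref{vienna18}, so that Theorem~\ref{thm:2.6} will apply to the $q-1$-edge subgraph once the poles and zeros of $\widehat f_{\bf n}$ are supplied with appropriate multiplicities.

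I would obtain this subgraph from Theorem~\ref{thm:2.6} (for $q=2$ the subgraph is a single string whose masses and lengths are read off directly from the continued fraction \eqref{confrac} of $\widehat f_{\bf n}$). The Neumann and Dirichlet data for the subgraph are, respectively, the poles and the zeros of $\widehat f_{\bf n}$, but with their multiplicities inflated so as to reinstate the factors cancelled in $\Phi$: by hypothesis~3) each coincidence value $\lambda_k^2=\mu_k^2$ is a simple zero of $\widehat f_{\bf n}$, and by Theorem~\ref{thm:4.2} together with Remark~\ref{rem:4.5} it must enter the subgraph as a zero of $\phi_{D,q-1}$ of multiplicity $\min\{k_0,k_\infty\}+1$ and of $\phi_{N,q-1}$ of multiplicity $\min\{k_0,k_\infty\}$, where $k_0,k_\infty$ are the multiplicities of $\lambda_k,\mu_k$ in the given sequences. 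One then checks, using hypotheses~1), 2), 3), Theorem~\ref{thm:4.2} and Corollary~\ref{cor:4.4}, that the two sequences so obtained satisfy the hypotheses of Theorem~\ref{thm:2.6} for $q-1$ edges, and that the total mass count of the assembled graph equals $n$. Finally, $\phi({\bf l}_0,z)$ and $\phi(\infty,z)$ for the assembled graph both have degree $n$ and, by Proposition~\ref{prop:4.3}, satisfy ${\bf l}_0\,\phi({\bf l}_0,z)/\phi(\infty,z)=\Phi(z)$; comparison with $\Phi(z)=\gamma\prod_{k=1}^n(1-z/\lambda_k^2)/\prod_{k=1}^n(1-z/\mu_k^2)$ identifies $\phi({\bf l}_0,z)$ and $\phi(\infty,z)$ up to constant factors, so the constructed graph has the prescribed Dirichlet and Neumann spectra with the prescribed multiplicities.

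The step I expect to be the main obstacle is exactly the multiplicity bookkeeping above: the continued fraction tail $\widehat f_{\bf n}$ records only simple zeros and poles, so one must restore the cancelled common factors of $\Phi$ as shared zeros of $\phi_{D,q-1}$ and $\phi_{N,q-1}$ of the correct orders — in particular verifying that the inflated Dirichlet multiplicity $\min\{k_0,k_\infty\}+1$ never exceeds $q-1$ — by running Theorem~\ref{thm:4.2} and Corollary~\ref{cor:4.4} in reverse at the coincidence points, and then confirming that this forces precisely the prescribed multiplicities of $\mu_k$ and $\lambda_k$ in the final graph.
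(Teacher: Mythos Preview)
Your approach is essentially the paper's: extract the main-edge data from the continued fraction of $\Phi$ via Lemma~\ref{lem:5.1}, feed the tail $\widehat f_{\bf n}$ into Theorem~\ref{thm:2.6} for the $(q-1)$-edge subgraph, and close with Proposition~\ref{prop:4.3}. The one place where you diverge is the multiplicity bookkeeping, and there your proposed route is shakier than it needs to be.

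You plan to ``run Theorem~\ref{thm:4.2} and Corollary~\ref{cor:4.4} in reverse'' to decide how to inflate the multiplicities at coincidence points. But those are direct-problem statements: they presuppose an existing graph and read off relations among its characteristic polynomials. Using them to \emph{choose} the inflated multiplicities before the subgraph exists is circular as stated. The paper sidesteps this entirely: it simply writes $\widehat f_{\bf n}=g_{\bf n}/h_{\bf n}$, lists the cancelled common factors of $\Phi$ as $\{\gamma_k^2\}_{k=1}^{n-p}$, sets
\[
\widetilde g_{\bf n}(z):=g_{\bf n}(z)\prod_{k=1}^{n-p}(z-\gamma_k^2),\qquad
\widetilde h_{\bf n}(z):=h_{\bf n}(z)\prod_{k=1}^{n-p}(z-\gamma_k^2),
\]
and then verifies the hypotheses of Theorem~\ref{thm:2.6} for the zero sets $\{\tau_k\}$, $\{\theta_k\}$ of $\widetilde g_{\bf n}$, $\widetilde h_{\bf n}$ directly from assumptions 1)--3) here (the strict inequality $\theta_1<\tau_1$ comes from $\mu_1<\lambda_1$; condition~2) of Theorem~\ref{thm:2.6} comes from assumption~3) here since $\widehat f_{\bf n}(\theta_k^2)=0$ forces the $\widetilde g_{\bf n}$-multiplicity to exceed the $\widetilde h_{\bf n}$-multiplicity by one; the multiplicity bound uses assumption~2)). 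This gives the inflated sequences without appealing to any direct-problem result, and the multiplicity match in the final identity then follows from the multiplicity clause already built into the proof of Theorem~\ref{thm:2.6}. Replace your reverse-engineering step with this direct definition and the argument goes through cleanly.
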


\begin{proof}
Due to assumptions 1) and 2), the given data yield integers $p$, ${\bf n} \in \N$ and the functions $\Phi$ and $\widehat f_{\bf n}$ as in Lemma~\ref{lem:5.1} in (\ref{vienna2})--(\ref{5.2}). The star graph we search for will be constructed as follows.
For the main edge we choose ${\bf n}$ as in (\ref{vienna4}) to be the number of masses, ${\bf m}_k := b_k$ ($k=1,2,\dots, {\bf n}$) as the masses,  ${\bf l}_k := a_k$ ($k=0,1,\dots, {\bf n}-1$) and ${\bf l_n} := a_{\bf n}-a_{\bf n}^1$  as the lengths of intervals between them, while the function $\widehat f_{\bf n}$ from  (\ref{vienna3}), (\ref{5.2}) will be used to construct the subgraph of the other $q-1$ edges using our first inverse Theorem~\ref{thm:2.6}.

From the continued fraction expansion (\ref{5.2}) of $\widehat f_{\bf n}$, it follows that $\widehat f_{\bf n}$ is an $S_0$-function; moreover, $\widehat f_{\bf n}$ is the quotient of two polynomials $g_{\bf n}(z)$ and $h_{\bf n}(z)$,
\[
  \widehat f_{\bf n}(z) = \frac{g_{\bf n}(z)}{h_{\bf n}(z)}, \quad 
  {\rm deg\,} g_{\bf n} = \left\{ \begin{array}{ll} p-{\bf n}   & \mbox{ if } a_{\bf n}^1 > 0, \\
                                              p-{\bf n}-1 & \mbox{ if } a_{\bf n}^1 = 0,
                            \end{array} 
                    \right. \quad 
  {\rm deg\,} h_{\bf n} = p-{\bf n}.
\]
By Lemma \ref{lem:2.1a}, the zeros and poles of $\widehat f_{\bf n}$, i.e.\ the zeros of $g_{\bf n}$ and $h_{\bf n}$ strictly interlace.

As in the proof of Lemma~\ref{lem:5.1}, let $n-p$ be the number of common factors in the numerator and denominator of $\Phi$. Denote by $\{\gamma_k^2\}_{k=1}^{n-p}$ their common zeros and~set
\[
  \widetilde g_{\bf n}(z) := g_{\bf n}(z) \prod_{k=1}^{n-p} (z-\gamma_k^2), \quad 
  \widetilde h_{\bf n}(z) := h_{\bf n}(z) \prod_{k=1}^{n-p} (z-\gamma_k^2).
\]
The number of zeros $\tau_k^2$ of $\widetilde g_{\bf n}(z)$ and $\theta_k^2$ of $\widetilde h_{\bf n}(z)$, counted with multiplicities, coincides with the respective degrees,
\[
 {\rm deg\,} \widetilde g_{\bf n} = \left\{ \begin{array}{ll} n-{\bf n}   & \mbox{ if } a_{\bf n}^1 > 0, \\
                                              n-{\bf n}-1 & \mbox{ if } a_{\bf n}^1 = 0,
                            \end{array} 
                    \right. \quad 
  {\rm deg\,} \widetilde h_{\bf n} = n-{\bf n}.
\]
We now show that the sequences 
$\{\pm\tau_k\}_{k=1}^{n-{\bf n}}$ if $a_{\bf n}^1 > 0$  and $\{\pm\tau_k\}_{k=1}^{n-{\bf n}-1}$ if $a_{\bf n}^1 = 0$ and $\{\pm\theta_k\}_{k=1}^{n-{\bf n}}$ satisfy the assumptions of Theorem~\ref{thm:2.6}  with $q-1$ instead of $q$; more precisely, 
$\pm\theta_k$ will take the role of $\lambda_{\pm k}$ in Theorem~\ref{thm:2.6}  and $\pm\tau_k$ the role of $\zeta_{\pm k}$ in Theorem~\ref{thm:2.6},
and we will have $M=0$ if $a_{\bf n}^1>0$ and $M>0$ if $a_{\bf n}^1 = 0$.

Condition 0) in Theorem~\ref{thm:2.6}  is satisfied automatically. The interlacing conditions in 1) except for the second strict inequality in Theorem~\ref{thm:2.6}  hold because the zeros of $g_{\bf n}$, $h_{\bf n}$ are all positive, interlace strictly and $\widetilde g_{\bf n}$, $\widetilde h_{\bf n}$ arise from $g_{\bf n}$, $h_{\bf n}$ only by adding common zeros. If $\theta_1=\tau_1$, then $\theta_1$ is a common zero of the numerator and denominator of $\Phi$ and hence $\theta_1 = \mu_1=\lambda_1$, a contradiction to the inequality $\mu_1<\lambda_1$ in assumption 1). Condition 2) in Theorem~\ref{thm:2.6}  holds because if $\tau_k=\theta_k$, then $\theta_k$ is a common zero of the numerator and denominator of $\Phi$ and hence $\theta_k=\mu_k = \lambda_k$ or $=\lambda_{k+1}$. Then assumption 3) yields that $\widehat f_{\bf n}(\theta_k^2)=0$ which implies that the multiplicity of the zero $\theta_k^2$ of $g_{\bf n}$ is one more than the multiplicity of the zero $\theta_k^2$ of $h_{\bf n}$, and the same with $\widetilde g_{\bf n}$ and $\widetilde h_{\bf n}$. Finally, condition 3) of Theorem~\ref{thm:2.6}   holds because by assumption 2) the multiplicity of $\tau_k^2$ is $\le q-1$ and hence, by the above, the multiplicity of $\theta_k^2$ is $\le q-2$.

It remains to be shown that if we construct the functions $\Phi({\bf l_0},z)$, $\Phi(\infty,z)$ from all the data on the main string and the subgraph with $q-1$ edges collected above according to the formulas (\ref{4.11}) and (\ref{4.15}), then
\begin{equation}
\label{vienna15}
 \Phi(z) = {\bf l_0} \frac{\Phi({\bf l_0},z)}{\Phi(\infty,z)},
\end{equation}
and the multiplicities of all zeros and poles on the left and right hand side coincide.

By Lemma~\ref{lem:5.1} (\ref{vienna3}) and the choice of the masses and intervals between them on the main edge, we know that
\begin{equation}
\label{vienna12a}
\Phi(z)= {\bf l}_0+\frac{1}{-{\bf m}_1z+\frac{1}{{\bf l}_1+\frac{1}{-{\bf m}_2z+\dots +\frac{1}{{\bf l}_{{\bf n}-1}
+\frac{1}{-{\bf m}_{\bf n}z+\frac{1}{{\bf l_n}+\widehat f_{\bf n}(z)}}}}}}.
\end{equation}
On the other hand, by \eqref{psiq} and (\ref{vienna13}) in the proof of Theorem~\ref{thm:2.6} (recall that
$\theta_k$ plays the role of $\lambda_k$ and $\tau_k$ the role of $\zeta_k$ in \eqref{psiq}) we have  
\begin{equation}
\label{vienna14}
  \widehat f_{\bf n}(z)=\frac{\widetilde g_{\bf n}(z)}{\widetilde h_{\bf n}(z)} = \frac 1 {\Psi_{q-1}(z)} =
  \bigg( -Mz+\mathop{\sum}\limits_{j=1}^{q-1} \frac{R_{2n_j-1}^{(j)}(z)}{R_{2n_j}^{(j)}(z)} \bigg)^{-1}. 
\end{equation}
Now (\ref{vienna12a}), (\ref{vienna14}) together with Proposition~\ref{prop:4.3}  yield the claimed identity (\ref{vienna15}), including equality of all multiplicities.
\end{proof}

\begin{corollary}
\label{cor:5.3}
The eigenvalues of the Neumann problem {\rm (N2)} in
{\rm (\ref{4.1})--(\ref{4.5}), (\ref{4.7})} and of the Dirichlet problem {\rm (D2)} in
{\rm (\ref{4.1})--(\ref{4.5}), (\ref{4.6})}, together with the total length ${\bf l}$ of the main edge, 
uniquely determine the mass distribution on the main edge, i.e.\ 
the number ${\bf n}$, the masses $\{{\bf m}_k\}_{k=1}^{\bf n}$, and the subintervals $\{{\bf l}_k\}_{k=1}^{\bf n}$ between them.
\end{corollary}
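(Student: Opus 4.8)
The plan is to read the corollary off from Theorem~\ref{thm:5.2} and its proof — which already recover the main-edge parameters from one single function $\Phi$ — supplemented by the uniqueness contained in Lemma~\ref{lem:5.1} and by the fact that $\Phi$ itself is forced by the data. More precisely, the eigenvalues $\{\mu_k\}$ of {\rm (N2)}, the eigenvalues $\{\lambda_k\}$ of {\rm (D2)} and the length ${\bf l}$, together with the lengths $l_1,\dots,l_{q-1}$ of the other edges (which are part of the data of this subsection), determine the rational function $\Phi$, and the number ${\bf n}$ of masses on the main edge, the masses ${\bf m}_k$ and the subintervals ${\bf l}_k$ are then extracted from $\Phi$ by an unambiguous procedure — in particular, one that does not depend on the non-unique reconstruction of the remaining $q-1$ edges.

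First I would show that $\Phi$ is determined by the data. For any star graph of the type considered, $\phi({\bf l}_0,z)$ and $\phi(\infty,z)$ in \eqref{4.11}, \eqref{4.15} are polynomials of the same degree $n$ whose zeros, counted with multiplicities, are the numbers $\lambda_k^2$ and $\mu_k^2$, respectively; by Corollary~\ref{cor:4.2} and the proof of Proposition~\ref{prop:4.3} we have ${\bf l}_0\,\phi({\bf l}_0,z)/\phi(\infty,z)\to{\bf l}_0$ as $z\to\infty$, so the leading coefficients of $\phi({\bf l}_0,\cdot)$ and $\phi(\infty,\cdot)$ coincide and hence
\[
  {\bf l}_0\,\frac{\phi({\bf l}_0,z)}{\phi(\infty,z)}
  = {\bf l}_0\,\frac{\prod_{k=1}^n(z-\lambda_k^2)}{\prod_{k=1}^n(z-\mu_k^2)}.
\]
Evaluating the branching continued fraction \eqref{vienna9} at $z=0$ and using $\phi^{(j)}(0)=l_j$ gives ${\bf l}_0\,\phi({\bf l}_0,0)/\phi(\infty,0)={\bf l}+\bigl(\sum_{j=1}^{q-1}1/l_j\bigr)^{-1}=:\gamma$, so the function above equals $\gamma\,\prod_{k=1}^n(1-z/\lambda_k^2)\big/\prod_{k=1}^n(1-z/\mu_k^2)$, i.e.\ it is precisely the function $\Phi$ of Lemma~\ref{lem:5.1}. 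In particular $\Phi$ depends only on the two spectra and on ${\bf l},l_1,\dots,l_{q-1}$, and is the same for every star graph realizing these data, irrespective of how the $\lambda_k$ not belonging to the main edge are distributed among the other $q-1$ edges.

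Next I would invoke the uniqueness in Lemma~\ref{lem:5.1}: $\Phi$ determines unique $p\ge{\bf n}$ in~$\N$, unique $a_k>0$ $(k=0,\dots,p)$, $b_k>0$ $(k=1,\dots,p)$ and $a_{\bf n}^1=\sum_{k=0}^{\bf n}a_k-{\bf l}\ge 0$, with ${\bf n}$ characterised by \eqref{vienna4}. Comparing, as in the proof of Proposition~\ref{prop:4.3}, the ordinary continued fraction \eqref{vienna10} of ${\bf l}_0\,\phi({\bf l}_0,z)/\phi(\infty,z)=\Phi(z)$ with the branching representation \eqref{vienna9}, and using the uniqueness of continued fraction expansions (see Lemma~\ref{lem:2.1a}), one sees that every star graph realizing the data must satisfy ${\bf l}_0=a_0$, ${\bf m}_k=b_k$ $(k=1,\dots,{\bf n})$ and ${\bf l}_k=a_k$ $(k=0,1,\dots,{\bf n}-1)$; moreover the constraint $\sum_{k=0}^{\bf n}{\bf l}_k={\bf l}$ forces the main edge to carry exactly the number ${\bf n}$ of \eqref{vienna4} and gives ${\bf l}_{\bf n}=a_{\bf n}-a_{\bf n}^1={\bf l}-\sum_{k=0}^{{\bf n}-1}a_k$. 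Since $\Phi$, and with it $p$, the $a_k$, the $b_k$, $a_{\bf n}^1$ and therefore ${\bf n}$, ${\bf m}_k$, ${\bf l}_k$, are all fixed by the data, the corollary follows.

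The one point that needs care — and is in fact the only genuine obstacle — is the bookkeeping at level ${\bf n}$: the ordinary continued fraction \eqref{vienna10} of $\Phi$ runs over $p\ge{\bf n}$ levels, and it is precisely the prescribed value ${\bf l}$ that, via \eqref{vienna4}, singles out the cut-off index ${\bf n}$ and splits $a_{\bf n}$ into ${\bf l}_{\bf n}={\bf l}-\sum_{k=0}^{{\bf n}-1}a_k$ and the excess $a_{\bf n}^1$ (which vanishes exactly when $M>0$ and equals $\bigl(\sum_{j=1}^{q-1}1/l_{n_j}^{(j)}\bigr)^{-1}$ when $M=0$); below the ${\bf n}$-th level the continued fraction of $\Phi$ reproduces the tail $\widehat f_{\bf n}=1/\Psi_{q-1}$ with $\Psi_{q-1}(z)=-Mz+\sum_{j=1}^{q-1}1/\phi^{(j)}(z)$, which is treated by our first inverse theorem. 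All of this is already contained in Lemma~\ref{lem:5.1}, so no new estimate is required; the masses and subintervals on the other $q-1$ edges are, on the contrary, not determined, since $\widehat f_{\bf n}$ fixes only $\Psi_{q-1}$, whose inverse spectral problem (Theorem~\ref{thm:2.6}) need not have a unique solution.
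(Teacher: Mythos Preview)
Your argument is correct and is precisely the reasoning the paper leaves implicit: in the paper Corollary~\ref{cor:5.3} is stated without proof, immediately after Theorem~\ref{thm:5.2}, and is meant to be read off from the uniqueness in Lemma~\ref{lem:5.1} together with the identification $a_k={\bf l}_k$, $b_k={\bf m}_k$ established in Proposition~\ref{prop:4.3}. Your explicit verification that ${\bf l}_0\,\phi({\bf l}_0,z)/\phi(\infty,z)$ coincides with the function $\Phi$ of Lemma~\ref{lem:5.1} (via the limit $z\to\infty$ for the leading coefficients and the evaluation at $z=0$ for the normalizing constant $\gamma$) fills in exactly the step the paper skips, and your remark that the lengths $l_1,\dots,l_{q-1}$ enter through $\gamma$ and are tacitly part of the data is well taken.
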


For the case of strict interlacing of the two spectra of (N2) and (D2), which means that all eigenvalues are simple, 
we have the following simpler sufficient conditions for the solvability of the inverse problem.

\begin{corollary}
\label{cor:5.4}
All claims of Theorem {\rm \ref{thm:5.2}} and Corollary {\rm \ref{cor:5.3}} continue to hold if we only assume condition {\rm 0)} together with the strengthened condition 
\begin{enumerate}
\item[{\rm 1')}] 
$ 0<\mu_1<\lambda_1<\mu_2< \dots<\mu_n<\lambda_n$.
\end{enumerate}
\end{corollary}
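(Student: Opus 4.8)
The plan is to reduce Corollary~\ref{cor:5.4} to Theorem~\ref{thm:5.2} and Corollary~\ref{cor:5.3} by showing that the pair of hypotheses 0), 1') already implies all four hypotheses 0), 1), 2), 3) of Theorem~\ref{thm:5.2}; once this is done, the conclusions follow word for word. First I would note that condition 0) is literally the same and that 1') is a strengthening of 1), so 1) holds. Hypothesis \eqref{vienna1} of Lemma~\ref{lem:5.1} is then also satisfied (strict interlacing implies non-strict interlacing), so the function $\widehat f_{\bf n}$ referred to in condition 3) of Theorem~\ref{thm:5.2} is well defined; in fact, since 1') rules out common factors in the numerator and denominator of $\Phi$, one has $p=n$ and no cancellation occurs, but this will not be needed.

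Next I would check condition 2). Under 1') the positive numbers $\mu_1<\dots<\mu_n$ are pairwise distinct and all different from $0$, and the same holds for $\lambda_1<\dots<\lambda_n$; together with $\mu_{-k}=-\mu_k$, $\lambda_{-k}=-\lambda_k$ this means that each $\mu_k$ has multiplicity exactly $1$ in $\{\mu_k\}_{k=-n,\,k\neq 0}^n$ and each $\lambda_k$ has multiplicity exactly $1$ in $\{\lambda_k\}_{k=-n,\,k\neq 0}^n$. Since $q\ge 2$ forces $q-1\ge 1$, condition 2) of Theorem~\ref{thm:5.2} holds. Finally, for condition 3) the point is that its hypothesis is vacuous: from 1') we get the strict chain $0<\mu_1<\lambda_1<\mu_2<\lambda_2<\dots<\mu_n<\lambda_n$, whence $\mu_k\neq\lambda_k$ for every $k$, and $\mu_k<\lambda_k<\lambda_{k+1}$ gives $\mu_k\neq\lambda_{k+1}$ for $k\le n-1$; thus there is no index $k$ with $\mu_k=\lambda_k$ or $\mu_k=\lambda_{k+1}$, and the implication in 3) holds trivially, irrespective of the value of $\widehat f_{\bf n}$.

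Having verified 0), 1), 2), 3), I would simply invoke Theorem~\ref{thm:5.2} to produce the required star graph of Stieltjes strings and Corollary~\ref{cor:5.3} to obtain the uniqueness of the mass distribution on the main edge. I do not expect any genuine obstacle here: the argument is a direct specialization. The only places that call for a moment's care are the observation that strict interlacing makes condition 3) vacuous (so that $\widehat f_{\bf n}$ never has to be evaluated) and the use of the standing assumption $q\ge 2$ to ensure that eigenvalues of multiplicity $1$ respect the bound $q-1$.
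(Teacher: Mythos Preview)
Your proposal is correct and is precisely the natural argument: the paper states Corollary~\ref{cor:5.4} without proof, and the intended reasoning is exactly to observe that 0) and 1') force conditions 1), 2), 3) of Theorem~\ref{thm:5.2} (condition 3) vacuously, condition 2) because all multiplicities are $1\le q-1$), so that Theorem~\ref{thm:5.2} and Corollary~\ref{cor:5.3} apply directly.
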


\section{Comparison with results for eigenvalues of tree-patterned matrices}
\label{sec:6}

Interlacing conditions of finite sequences of real numbers also play a role in the theory of symmetric matrices.
To conclude this paper we show how our results on star graphs of Stieltjes strings can be used to prove the existence of 
a star-patterned symmetric matrix and submatrix with prescribed interlacing spectra.

The necessity in the following well-known equivalence result is attributed to Cauchy (see \cite{C}); sufficiency was proved in \cite{FP} 
(see also \cite{BG} and \cite{HJ}).
Here, for an $(n+1)\times (n+1)$ real symmetric matrix $H$, we denote 
by $H_{1,1}$ the $n\times n$ first principal submatrix obtained from $H$ by deleting the first row and first~column.  

\begin{proposition}
\label{prop:6.1}
\cite{NU} There exists an $(n+1)\times (n+1)$ symmetric matrix $H$ such that the eigenvalues of $H$ are $\lambda_1\leq \lambda_2\leq \dots\leq\lambda_{n+1}$ and the eigenvalues of the submatrix $H_{1,1}$ are $\mu_1\leq\mu_2\leq\dots\leq\mu_n$ if and only if 
\begin{equation}
\label{6.1}
\lambda_1\leq\mu_1\leq\lambda_2\leq\mu_2\leq\dots\leq\mu_n\leq\lambda_{n+1}.
\end{equation}
\end{proposition}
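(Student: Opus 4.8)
This is a biconditional, and I would treat the two implications separately.

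\emph{Necessity.} If such an $H$ exists, then \eqref{6.1} is the Cauchy interlacing inequality \cite{C}, which I would deduce from the Courant--Fischer min--max principle. Identifying $\R^n$ with $e_1^\perp\subset\R^{n+1}$ (the span of the last $n$ coordinate vectors), the quadratic form of $H_{1,1}$ is the restriction of that of $H$, so for $1\le i\le n$
\[
 \mu_i=\min_{\substack{W\subseteq e_1^\perp\\ \dim W=i}}\ \max_{x\in W\setminus\{0\}}\frac{x^\top Hx}{x^\top x}\ \ge\ \min_{\substack{V\subseteq\R^{n+1}\\ \dim V=i}}\ \max_{x\in V\setminus\{0\}}\frac{x^\top Hx}{x^\top x}=\lambda_i,
\]
and applying the complementary max--min characterisation to subspaces $W$ of dimension $n+1-i$ gives $\mu_i\le\lambda_{i+1}$. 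This is routine and I would only indicate it.

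\emph{Sufficiency, strict case.} Assume first that $\lambda_1<\mu_1<\lambda_2<\dots<\mu_n<\lambda_{n+1}$. I would look for $H$ in bordered (``arrowhead'') form with $H_{1,1}$ already diagonal,
\[
 H=\begin{pmatrix} d & b^\top\\ b & \operatorname{diag}(\mu_1,\dots,\mu_n)\end{pmatrix},\qquad d\in\R,\ \ b=(b_1,\dots,b_n)^\top\in\R^n,
\]
so that the spectrum of $H_{1,1}$ is correct automatically. Taking the Schur complement of the lower right block one obtains, for $z\notin\{\mu_1,\dots,\mu_n\}$,
\[
 \det(zI-H)=\Bigl(\prod_{j=1}^n(z-\mu_j)\Bigr)\Bigl(z-d-\sum_{j=1}^n\frac{b_j^2}{z-\mu_j}\Bigr),
\]
so $H$ has eigenvalues $\lambda_1,\dots,\lambda_{n+1}$ exactly when
\[
 z-d-\sum_{j=1}^n\frac{b_j^2}{z-\mu_j}=\frac{\prod_{i=1}^{n+1}(z-\lambda_i)}{\prod_{j=1}^n(z-\mu_j)},
\]
that is, comparing the partial-fraction expansions, when $d=\sum_i\lambda_i-\sum_j\mu_j$ and
\[
 b_j^2=-\,\frac{\prod_{i=1}^{n+1}(\mu_j-\lambda_i)}{\prod_{k\ne j}(\mu_j-\mu_k)}\qquad(j=1,\dots,n).
\]
The only place the hypothesis enters is that these numbers are nonnegative: counting signs, among the factors $\mu_j-\lambda_i$ exactly the $n+1-j$ with $i\ge j+1$ are negative, and among the $\mu_j-\mu_k$ exactly the $n-j$ with $k>j$ are negative, so the quotient has sign $(-1)^{(n+1-j)-(n-j)}=-1$ and hence $b_j^2\ge 0$. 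A real $b$ therefore exists, which settles the strict case (this is the construction of \cite{FP}).

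\emph{General case and the main obstacle.} For possibly non-strict interlacing I would pass to the limit: choose $\lambda_i^{(m)},\mu_j^{(m)}$ strictly interlacing with $\lambda_i^{(m)}\to\lambda_i$, $\mu_j^{(m)}\to\mu_j$, realise them by the arrowhead matrices $H^{(m)}$ above, note that $\|H^{(m)}\|$ is bounded in terms of $\max_i|\lambda_i^{(m)}|$, extract a convergent subsequence $H^{(m)}\to H$, and invoke continuity of eigenvalues to conclude that $H$ and $H_{1,1}$ have the prescribed spectra; alternatively one splits off, for each value occurring with multiplicity $r\ge 2$ among the $\mu_j$, an $(r-1)\times(r-1)$ scalar block from $H$ and from $H_{1,1}$ simultaneously (legitimate because \eqref{6.1} forces that value to have multiplicity $\ge r-1$ among the $\lambda_i$), reducing to the strict case. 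The step I expect to need the most care is exactly this multiplicity bookkeeping together with the sign count for $b_j^2\ge0$; neither is deep, but both must be set up carefully. Finally, I would record the connection with our results: Corollary~\ref{cor:5.4} (for strict interlacing) and Theorem~\ref{thm:5.2} (in general), after a positive shift making all prescribed numbers positive and passage to $\pm$-symmetric square roots, realise such interlacing data by a star graph of Stieltjes strings whose mass--stiffness matrix $B^{-1/2}AB^{-1/2}$ is a star-patterned $H$ of the required type -- the constructive counterpart, within that class, of the existence statement above.
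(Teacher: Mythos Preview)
The paper does not prove Proposition~6.1; it is stated with a citation to \cite{NU}, and the surrounding text attributes necessity to Cauchy \cite{C} and sufficiency to Fan--Pall \cite{FP} (see also \cite{BG}, \cite{HJ}). So there is no ``paper's own proof'' to compare against.

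Your argument is correct and is exactly the classical route the paper points to: Courant--Fischer for necessity, the bordered/arrowhead construction for sufficiency, with the residue formula for $b_j^2$ and the sign count establishing nonnegativity. The limiting argument (or the block-splitting alternative) for the non-strict case is the standard way to close the gap and is fine.

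One small correction in your closing paragraph linking back to this paper: the star-patterned matrix realisation discussed in Section~\ref{sec:6} comes from Theorem~\ref{thm:2.6} (root at the \emph{centre}), not from Theorem~\ref{thm:5.2} or Corollary~\ref{cor:5.4} (root at a pendant vertex). In the matrix picture, deleting the first row and column corresponds to clamping the central vertex, i.e.\ passing from (N1) to (D1); the relevant data are therefore the sequences $\{\lambda_k^2\}$ and $\{\zeta_k^2\}$ of Theorem~\ref{thm:2.6}, with the central mass $M>0$ playing the role of the ``extra'' row/column. This does not affect the correctness of your proof of Proposition~\ref{prop:6.1}, only the accuracy of the side remark.
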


The connection of this result with tree-patterned matrices is given in \cite{D} and \cite{NU}. First we 
recall the following definition and notation.

\begin{definition}
\cite{D}   \
Let $\Gamma$ be a tree  with vertex set $\,V=\{v_1, v_2, \dots, v_{n+1}\}$ and  
$A=\big(a_{i,j}\big)_{i,j=1}^{n+1}$  an $(n+1)\times (n+1)$ matrix (with entries~$a_{ij}$ from some ring).
Then $A$ is called $\Gamma$-\emph{acyclic} if $a_{i,j}=a_{j,i}=0$ whenever  $i\!\not=\!j$ and $v_i$, $v_j$ are not~adjacent. 

If $A$ is $\Gamma$-acyclic  and $\Gamma'$ is a subgraph of $\Gamma$, we denote by $A_{\Gamma'}$ the principal submatrix of $A$ consisting of all rows and columns whose indices are the vertices of $\Gamma$.
If $i,j\in\{1,2,\dots,n+1\}$, $i\ne j$, we denote by $\Gamma(i)$ the subgraph obtained from $\Gamma$ 
by deleting the vertex $v_i$ and all the edges incident to $v_i$, and by $\Gamma_j(i)$ the connected component of $\Gamma(i)$ that has $v_j$ as a  
vertex ($v_j\in V\backslash \{v_i\}$); finally, we set  $N_i(\Gamma):= \{ j \in \{1,2,\dots,n\}: v_j \mbox{ adjacent to } v_i \mbox{ in } \Gamma\}$. 
\end{definition}

\begin{theorem}
\label{thm:6.3}
\cite{D} \ Let $\Gamma$ be a tree  with $n+1$ vertices, $i \in \{1,2,\dots, n+1\}$, and let $m:= \# N_i(\Gamma)$. Let $g_1$, $g_2, \dots, g_m$ be monic polynomials with real roots and ${\rm deg}\, g_j $ equal to the number of vertices of $\,\Gamma_{j}(i)$ $(j=1,2,\dots,m)$. Let $\mu_1\leq \dots\leq\mu_n$ denote the roots of the product $g:=g_1\cdot g_2 \cdot \ldots \cdot g_m$, and let $\lambda_1\leq \lambda_2 \le \dots\leq \lambda_{n+1}$ be real.

Then there exists a Hermitian $\Gamma$-acyclic matrix $A$ possessing the eigenvalues $\lambda_1, \lambda_2, \dots, \lambda_{n+1}$ such that for each $j\in N_i(\Gamma)$ the submatrix  $A_{\Gamma_{j}(i)}$ has characteristic polynomial  $g_j$ if and only if 
{\rm (\ref{6.1})} holds; if all inequalities are strict, then $A$ is irreducible.
\end{theorem}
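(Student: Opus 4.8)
The plan is to prove the two implications separately. \emph{Necessity} is Cauchy interlacing: index the vertices so that $v_i$ carries index $1$, so that the first principal submatrix $A_{1,1}$ is exactly $A_{\Gamma(i)}$; since removing $v_i$ splits $\Gamma$ into the components $\Gamma_j(i)$ $(j\in N_i(\Gamma))$ and $A$ is $\Gamma$-acyclic, vertices in distinct components are non-adjacent, so $A_{\Gamma(i)}$ is block diagonal with diagonal blocks $A_{\Gamma_j(i)}$ and hence $\chi_{A_{\Gamma(i)}}=\prod_j\chi_{A_{\Gamma_j(i)}}=\prod_j g_j=g$, whose roots are $\mu_1\le\dots\le\mu_n$; applying the necessity part of Proposition~\ref{prop:6.1} to $H=A$, $H_{1,1}=A_{1,1}$ gives \eqref{6.1}. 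For \emph{sufficiency} I would prove, by induction on the number of vertices, the slightly stronger statement: for any tree $T$ with a distinguished vertex $r$ and any monic polynomials $P$, $Q$ with only real roots, of degrees $|V(T)|$ and $|V(T)|-1$, whose roots interlace in the Cauchy pattern, there is a Hermitian $T$-acyclic $B$ with $\chi_B=P$ and $\chi_{B^{(r)}}=Q$, one being free to prescribe which roots of $Q$ each component of $T(r)$ receives as the roots of its diagonal block (subject to the interlacing being inherited), and $B$ may be taken irreducible whenever the interlacing of $P$ and $Q$ is strict. Theorem~\ref{thm:6.3} is then the special case $T=\Gamma$, $r=v_i$, $P=\prod_k(\lambda-\lambda_k)$, $Q=g=\prod_j g_j$: the interlacing hypothesis is precisely \eqref{6.1}, the prescribed factorisation $Q=\prod_j g_j$ is admissible because $\deg g_j=|V(\Gamma_j(i))|$, and block diagonality of $B^{(v_i)}$ forces $\chi_{A_{\Gamma_j(i)}}=g_j$; strict interlacing gives irreducibility.

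The inductive step rests on the bordered form of a rooted acyclic matrix. Let $r$ have neighbours $w_1,\dots,w_d$, with $w_l$ in the component $T_l$ of $T(r)$, and write
\begin{equation*}
 B=\begin{pmatrix} \delta & \gamma_1 e_1^{*} & \cdots & \gamma_d e_d^{*}\\[1mm]
 \overline{\gamma_1}\, e_1 & B_1 & & \\ \vdots & & \ddots & \\ \overline{\gamma_d}\, e_d & & & B_d\end{pmatrix},
\end{equation*}
with $B_l$ a $T_l$-acyclic Hermitian matrix, $e_l$ the coordinate vector of $w_l$ inside the $l$-th block, $\delta\in\R$, $\gamma_l\in\mathbb C$. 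A Schur-complement computation gives
\begin{equation*}
 \chi_B(\lambda)=\Big(\textstyle\prod_{l=1}^d\chi_{B_l}(\lambda)\Big)\Big(\lambda-\delta-\textstyle\sum_{l=1}^d|\gamma_l|^2\,\tfrac{\chi_{B_l^{(w_l)}}(\lambda)}{\chi_{B_l}(\lambda)}\Big),
\end{equation*}
so that $\chi_{B^{(r)}}=\prod_l\chi_{B_l}=Q$ is arranged by taking $\chi_{B_l}$ to be the prescribed real-rooted factors $Q_l$ of $Q$, while $\chi_B=P$ reduces to making $\lambda-\delta-\sum_l|\gamma_l|^2\chi_{B_l^{(w_l)}}(\lambda)/Q_l(\lambda)$ equal the rational function $P(\lambda)/Q(\lambda)$.

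By the interlacing of $P$ and $Q$, the latter has a partial-fraction expansion $\lambda-\delta_0-\sum_\sigma t_\sigma/(\lambda-\sigma)$ with $\delta_0\in\R$ and $t_\sigma\ge0$, the sum over the roots $\sigma$ of $Q$ with $\mathrm{mult}_P(\sigma)<\mathrm{mult}_Q(\sigma)$ (such a pole is then simple, and all $t_\sigma>0$ in the strict case). One sets $\delta:=\delta_0$, assigns each surviving pole $\sigma$ to a block $l(\sigma)$ whose factor $Q_{l(\sigma)}$ has $\sigma$ as a root, and chooses each $\chi_{B_l^{(w_l)}}$ — equivalently the nonnegative residues of the resolvent entry $[(\lambda-B_l)^{-1}]_{w_lw_l}=\chi_{B_l^{(w_l)}}/Q_l$ — so that block $l$ contributes residue $t_\sigma/|\gamma_l|^2$ at the poles assigned to it and residue $0$ at all other roots of $Q_l$; the resulting $\chi_{B_l^{(w_l)}}$ is monic of degree $|V(T_l)|-1$ with roots interlacing those of $Q_l$, so $(Q_l,\chi_{B_l^{(w_l)}})$ is admissible and the induction hypothesis supplies $B_l$ (the base case $|V(T)|=1$ being trivial). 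In the strict case every $t_\sigma>0$, hence every $\gamma_l\ne0$, and $B$, with the $B_l$ irreducible, is irreducible.

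The main difficulty I foresee is the multiplicity bookkeeping in the last step when the interlacing in \eqref{6.1} is not strict: one must check that the root multiset of $Q$ can be split among the factors $Q_l$ of the prescribed degrees and the poles distributed among the blocks so that \emph{every} resulting sub-problem $(Q_l,\chi_{B_l^{(w_l)}})$ again obeys the Cauchy interlacing pattern, so the induction does not stall — the analogue of this for star graphs is precisely the interlacing–multiplicity analysis carried out in Subsections~\ref{subsec:2.1} and~\ref{subsec:3.1}. A clean way to bypass most of it is to prove sufficiency first under strict interlacing, where all the choices above are forced, and then deduce the general case by a compactness argument: approximate $(\lambda_k)$, $(\mu_k)$ and the $g_j$ by strictly interlacing data, observe that the matrices produced have operator norm $\max_k|\lambda_k^{\varepsilon}|$ uniformly bounded, extract a convergent subsequence and pass to the limit — Hermiticity and $\Gamma$-acyclicity are closed conditions and the relevant characteristic polynomials depend continuously on the matrix, while irreducibility is (correctly) lost in the limit. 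Finally, the two star cases — $\Gamma$ a star with $v_i$ its centre, or $v_i$ a pendant vertex of a star — are covered directly by Theorems~\ref{thm:2.6} and~\ref{thm:5.2} of the present paper, via the dictionary between Stieltjes-string continued fractions and bordered symmetric matrices.
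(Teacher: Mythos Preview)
The paper does not prove Theorem~\ref{thm:6.3}; it is quoted verbatim from \cite{D} (Leal Duarte), so there is no proof in the present paper to compare your attempt against. Your sketch is essentially the argument of \cite{D}: the bordered block form of a $\Gamma$-acyclic matrix at a vertex, the Schur-complement identity for $\chi_B$, and the partial-fraction realisation of $P/Q$ via nonnegative residues, followed by induction on the subtrees. The compactness trick you propose for the non-strict case is a legitimate way to avoid the multiplicity bookkeeping and is close in spirit to how \cite{NU} handles the strict case first.

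Two points deserve care. First, in the inductive step you must keep the factorisation $Q=\prod_l Q_l$ \emph{prescribed} (here $Q_l=g_l$), not chosen: the freedom you actually have is only in the residues of $\chi_{B_l^{(w_l)}}/Q_l$, and you must check that a nonnegative residue vector with the right total mass always exists for each block and yields a $\chi_{B_l^{(w_l)}}$ that Cauchy-interlaces $Q_l$; this is where repeated roots of $g_l$ require attention. Your limit argument handles this, but state explicitly that $\Gamma$-acyclicity, Hermiticity, and the block characteristic polynomials $\chi_{A_{\Gamma_j(i)}}$ all pass to the limit. Second, your closing remark that the star cases ``are covered directly by Theorems~\ref{thm:2.6} and~\ref{thm:5.2}'' overstates matters: those theorems impose stronger hypotheses (positivity of the spectra, the multiplicity constraints 2) and 3)) than the bare interlacing \eqref{6.1}, and the paper itself says only that Theorem~\ref{thm:2.6} gives \emph{sufficient} conditions for the star-patterned problem, not that it recovers Theorem~\ref{thm:6.3} in that case.
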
 

\begin{remark}
In the  two-dimensional case the only Hermitian matrix having the eigenvalues $\lambda_1=\lambda_2=1$ is the identity matrix. The corresponding graph consists of two isolated vertices, i.e.\ it is not connected and hence not a tree. Therefore it seems that $\Gamma$ in the above theorem from \cite{D} may be disconnected.
\end{remark}

In \cite{NU} the above result was reproved by another method if the strict inequalities
\begin{equation}
\label{6.2}
\lambda_1<\mu_1<\lambda_2< \dots < \mu_n< \lambda_{n+1}
\end{equation}
hold. Thus condition (\ref{6.1}) is necessary, while condition (\ref{6.2}) is sufficient for the existence of a tree-patterned matrix as described in Theorem \ref{thm:6.3}. 

To relate our results to star-patterned matrices, we reformulate the direct Neumann and Dirichlet problem in Section \ref{subsec:2.1} 
in the case $M>0$ as matrix eigenvalue problems. 
  
To this end, we define the $(n+1)\times (n+1)$ diagonal \vspace{-2mm} matrix 
\[
M\!:=\!{\rm diag}\big\{ M, m_{n_1}^{(1)}, m_{n_1\!-\!1}^{(1)}, ..., m_1^{(1)}\!, m_{n_2}^{(2)},m_{n_2\!-\!1}^{(2)},...,m_1^{(2)}\!,... ,
m_{n_q}^{(q)},m_{n_q\!-\!1}^{(q)},..., m_1^{(q)}\big\}. 
\]
as well as the $n_j\times n_j$ matrices
\[
L_j\!:=\!\left(\!\!
\begin{array}{cccccccc}
\!\!\frac{1}{l^{(j)}_{n_j}}\!+\!\frac{1}{l^{(j)}_{n_j\!-\!1}} \!\!& -\frac{1}{l^{(j)}_{n_j\!-\!1}} & 0 & \cdots  & \cdots &  \cdots  & 0  & 0  \\
 -\frac{1}{l^{(j)}_{n_j\!-\!1}} & \!\!\frac{1}{l^{(j)}_{n_j\!-\!2}}\!+\!\frac{1}{l^{(j)}_{n_j\!-\!1}} \!\! &  -\frac{1}{l^{(j)}_{n_j\!-\!2}} & 0 &  &  &  &0  \\[-1.5mm]
\raisebox{-1mm}{0} &  -\frac{1}{l^{(j)}_{n_j\!-\!2}} & \!\!\!\!\!\!\frac{1}{l^{(j)}_{n_j\!-\!3}}\!+\!\frac{1}{l^{(j)}_{n_j\!-\!2}} \!\!\!\!&-\frac{1}{l^{(j)}_{n_j\!-\!3}} & & & & \vdots \\
\vdots &  & \hspace{6mm} \ddots & \ddots & \ddots &  &  & \vdots \\
\vdots &  &  & \ddots & \ddots & \ddots &  &  \vdots\\
\vdots &  &  &  & \ddots & \ddots & \ddots & 0 \\
0 &  &  &  &  & -\frac{1}{l^{(j)}_{2}} & \!\!\frac{1}{l^{(j)}_{2}}\!+\!\frac{1}{l^{(j)}_{1}}\!\! &
-\frac{1}{l^{(j)}_{1}}  \\
 0 & 0 & \cdots \ \ \ \ \cdots & \cdots & \cdots & 0 & -\frac{1}{l^{(j)}_{1}} & \!\!\frac{1}{l^{(j)}_{1}}\!+\!\frac{1}{l^{(j)}_{0}} \!\!
\end{array}
\!\!\right)
\]
for $j=1,2,\dots,q$ and
\[
L\!:=\!\left(\begin{array}{c|cclc|cclc|c|cclc}
\hspace{-3mm}
\mathop{\sum}\limits^q_{j=1}\frac{1}{l_{n_j}^{(j)}}\!\!\!&\!\!-\frac{1}{l^{(1)}_{n_1}}\!\!\!&\!\!\!0\!\!\!&\!\!\cdots\cdots\!\!&\!\!\!0\!\!&
\!\!-\frac{1}{l^{(2)}_{n_2}}\!\!\!&\!\!\!0\!\!\!&\!\!\cdots\cdots\!\!&\!\!\!0\!\!
&\cdots\cdots&\!\!-\frac{1}{l^{(q)}_{n_q}}\!\!\!&\!\!\!0\!\!\!&\!\!\cdots\cdots\!\!&\!\!\!0\!\! \\ \hline
-\frac{1}{l^{(1)}_{n_1}}& & & & & & & & & & & & & \hspace{-4mm} \\
0 & & &\!\!\! L_1\!\!\!\!\!\!& & & & \!\!\! 0 & & \!\!\!\cdots\cdots\!\!\! & & & \!\!\! 0 & \hspace{-4mm}  \\[-2mm]
\vdots & & & & & & & & & & & & & \hspace{-4mm} \\[-1mm] 
0 & & & & & & & & & & & \hspace{-4mm} \\ \hline
-\frac{1}{l^{(2)}_{n_2}}\!\!\! & & & & & & & & & & & & & \hspace{-4mm} \\
0 & & & \!\!\! 0 & & & & \!\!\!L_2 & & & & & & \hspace{-4mm} \\[-2mm]
\vdots & & & & & & & & & & & & & \hspace{-4mm} \\[-1mm]
0 & & & & & & & & & & & \hspace{-4mm} \\ \hline
\vdots & & & \!\!\!\vdots & & & & & & \ddots \hspace{5mm} & & & & \\[-2mm]
\vdots & & & \!\!\!\vdots & & & & & & \hspace{5mm}  \ddots & & & & \\ \hline
-\frac{1}{l^{(q)}_{n_q}}\!\!\! & & & & & & & & & & & & & \hspace{-4mm} \\
0 & & & \!\!\! 0 & & & & & & & & & \!\!\! L_q & \hspace{-4mm} \\[-2mm]
\vdots & & & & & & & & & & & & & \hspace{-4mm} \\[-1mm] 
0 & & & & & & & & & & & \hspace{-4mm}
\end{array} \right).
\]

Then the Neumann problem (N1) in (\ref{2.1})--(\ref{2.4}) with $z=\lambda^2$ is nothing but the eigenvalue problem for the~matrix
$$\widetilde{L}:=M^{-\frac{1}{2}}LM^{-\frac{1}{2}},$$
while the Dirichlet problem (D1) given by (\ref{2.5})--(\ref{2.7}) for $j=1,2,\dots,q$ is the eigenvalue problem for the submatrix $\widetilde L_{1,1}$ where the first row and column are deleted.

The matrix $\widetilde{L}$ is tree-patterned, 
where the corresponding tree $\Gamma$ is our star graph (a generalized star graph in terms of \cite{JD})
if each mass (including $M>0$) is identified as a vertex.

Theorem~\ref{thm:2.6} means that, under the assumptions therein and letting $i=1$, there exists a real Hermitian star-patterned $(n+1)\times (n+1)$ matrix such that its spectrum coincides with the set $\{\lambda^2_k\}_{k=1}^{n+1}$  and the spectrum of the submatrix obtained by deleting the first row and the first column coincides with the set $\{\zeta^2_k\}_{k=1}^{n}$. 

Thus Theorem~\ref{thm:2.6} provides sufficient conditions for two sequences $\{\lambda^2_k\}_{k=1}^{n+1}$ and $\{\zeta^2_k\}_{k=1}^{n}$ to be the spectra of a real Hermitian star-patterned matrix and its first principal submatrix, respectively. 

%
%
%

\section{Examples}
\label{sec:ex}

We conclude this paper by illustrating the inverse Theorems~\ref{thm:2.6} and \ref{thm:5.2} and their constructive proofs by means of a
simple example.

\begin{example}
\label{EX1}
Does there exist a star graph with root at a pendant vertex with $q=3$ edges and edge lengths $\bf{l}=2$, $l_1=2$, $l_2=1$ so that the corresponding Neu\-mann eigenvalues $\{\mu_{\pm k}\}_{k=-3,k\ne 0}^3$ and Dirichlet eigenvalues $\{\lambda_{\pm k}\}_{k=-3,k\ne 0}^3$ are given by
\begin{equation}
\label{ex1}
 \mu_1^2=0,5, \quad \mu_2^2=1,5, \quad \mu_3^2=2, \qquad
 \lambda_1^2=1, \quad \lambda_2^2=\lambda_3^2=2
\end{equation}
and $\mu_{-k}=-\mu_k$, $\lambda_{-k}=-\lambda_k$?
\end{example}

\medskip

\noindent
{\bf Constructive Solution.}
First we check if the  numbers $\{\mu_{\pm k}\}_{k=-3,k\ne 0}^3$ and \linebreak $\{\lambda_{\pm k}\}_{k=-3,k\ne 0}^3$ given in Example~\ref{EX1} satisfy the assumptions of Theorem~\ref{thm:5.2}.
To this end, we note that, by \eqref{vienna2},
\begin{align*}
\gamma & ={\bf l}+\frac{l_1l_2}{l_1+l_2}=\frac{8}{3},\\
\Phi(z) & =\frac{8}{3}\frac{(1-z)(1-z/2)}{(1-2z)(1-2z/3)}=\frac{z^2-3z+2}{z^2-2z+3/4}
=1+\frac{1}{-z+\frac{1}{\frac 43+\frac{1}{-3z+\frac{1}{\frac 13}}}}.
\end{align*}
Hence $a_0=1$, $a_1=\frac 43$ in \eqref{vienna2} and so $a_0=1<{\bf l}=2<1+\frac{4}{3}=a_0+a_1$. Thus, by \eqref{vienna4}, we have to choose ${\bf n}=1$. Moreover, we have $a_{\bf n}^1 = a_0+a_1 - {\bf l} = \frac 73 -2 = \frac 13$, $a_{\bf n} - a_{\bf n}^1 = \frac 43 - \frac 13 = 1$ and hence, by \eqref{vienna3}, \eqref{5.2},
\[
\Phi(z)
=1+\frac{1}{-z+\frac{1}{1+\widehat f_1(z)}},
\quad
\widehat f_1(z)=\frac 13+\frac{1}{-3z+\frac{1}{\frac 13}}=\frac{1}{3}\frac{(2-z)}{(1-z)}.
\]
Since $\{\mu_k\}_{k=-3,k\ne 0}^3$, $\{\lambda_k\}_{k=-3,k\ne 0}^3$ in Example~\ref{EX1} satisfy the interlacing conditions
\[
 0 < \mu_1 < \lambda_1 < \mu_2 < \lambda_2 =\mu_3 = \lambda_3
\]
and $\widehat f_1(\mu_3)=\widehat f_1(2)=0$, a graph as required \emph{does} exist by Theorem~\ref{thm:5.2}. 

In order to construct one such graph, we decompose
\begin{align*}
\widehat f_1(z)&=\frac{1}{3+\frac{3}{z-2}}=\frac{1}{\frac 32+\frac{1}{z-2}+\frac 32+\frac{2}{z-2}} =
\frac{1}{\frac{1}{\frac{z-2}{\frac 32z-2}}+\frac{1}{\frac{z-2}{\frac 32z-1}}}
\\
&=\frac{1}{\frac{1}{\frac 23-\frac{\frac 23}{\frac 32z-2}}+\frac{1}{\frac 23-\frac{\frac 43}{\frac 32z-1}}}=
\frac{1}
{\frac{1}{
\frac 23+\frac{1}{-\frac 94z+\frac{1}{\frac 13}}
}
+\frac{1}{
\frac 23+\frac{1}{-\frac 98z+\frac{1}{\frac 43}}
}
}.
\end{align*}
Thus the star graph with the mass distribution $M=0$ and
\begin{equation}
\label{ex5}
{\bf l_0}\!={\bf l_1}\!=1, \ {\bf m_1}\!=1, \quad 
\begin{aligned}
&l_0^{(1)}\!=\frac 23,  \ &l_1^{(1)}\!=\frac 43, \ \ \ &m_1^{(1)}\!=\frac 98, \\[1mm]
&l_0^{(2)}\!= \frac 23, \ &l_1^{(2)}\!= \frac 13, \ \ \ &m_1^{(2)}\!=\frac 94
\end{aligned}
\end{equation}
has the desired Neumann and Dirichlet eigenvalues. \hfill $\Box$

\medskip

The proof of Theorem~\ref{thm:2.6} does not only allow to construct one star graph with the given spectral data, but it provides a method to describe 
all such star graphs. These isospectral star graphs differ only on the subgraph of $q-1$ non-main edges and are constructed by applying the proof of Theorem~\ref{thm:2.6} to
\[
 \Psi_{2}(z) = \frac 1 { \widehat f_1(z)} = 3\frac{(1-z)}{(2-z)} = \frac 32 \frac{\big( 1- \frac z2\big)\big( 1- \frac z2\big)}{\big( 1- z \big)\big( 1- \frac z2\big)},
\]
i.e.\ with $\lambda_1^2=1$, $\zeta_1^2 = \lambda_2^2 = \zeta_2^2 = 2$.

\begin{example}
\label{EX2}
Construct all star graphs with root at the central vertex with $q-1=2$ edges and edge lengths $l_1=2$, $l_2=1$ such that the corresponding Neumann eigenvalues $\{\lambda_{\pm k}\}_{k=-2,k\ne 0}^2$ and Dirichlet eigenvalues $\{\zeta_{\pm k}\}_{k=-2,k\ne 0}^2$ are given by  
\begin{equation}
\label{ex2}
  \lambda_1^2=1, \quad  \zeta_1^2 = \lambda_2^2 = \zeta_2^2 = 2
\end{equation}
and $\lambda_{-k}=-\lambda_k$, $\zeta_{-k}=-\zeta_k$!
\end{example}

\noindent

{\bf Constructive Solution.}
It is easy to see that the numbers $\{\lambda_{\pm k}\}_{k=-2,k\ne 0}^2$ and $\{\zeta_{\pm k}\}_{k=-2,k\ne 0}^2$ satisfy the assumptions of Theorem~\ref{thm:2.6}. The first possibility for non-uniqueness is the subdivision \eqref{subdiv}. In the present example, there is only one such decomposition, namely $n_1=1$, $n_2=1$ because otherwise the Dirichlet eigenvalues on one edge are not simple, as required. 

Since the eigenvalue $\zeta_1=\zeta_2=2$ is double,  $\Psi_2(z)$  has a double pole and hence the representation \eqref{3.1a} is not unique; it allows for one free parameter: 
Because the degree of the numerator and denominator in $\Psi_2(z)$ are the same, we have $M=A_0=0$, and according to \eqref{3.1}, \eqref{3.1a} we can write
\[
  \Psi_2(z) =  \frac 3{z-2} + 3 = \Big( \frac{a}{z-2} +  \frac 12 + \frac {a}2 \Big) + \Big( \frac{3-a}{z-2} +  1 + \frac {3-a}2 \Big) =: \psi_1(z) + \psi_2(z),
\]
where the parameter is given by $a:=A_1^{(1)}>0$.
It is not difficult to check that the unique continued fraction expansions of $\psi^{-1}_1(z)$ and $\psi^{-1}_2(z)$ are given by
\[
 \frac 1{\psi_1(z)} = \frac 2{a+1} + \frac 1{ -\frac 1a \big( \frac{a+1}2 \big)^2 z + \frac 1{\frac{2a}{a+1}}}, \quad
 \frac 1{\psi_2(z)} = \frac 2{5-a} + \frac 1{ -\frac 1{3-a} \big( \frac{5-a}2 \big)^2 z + \frac 1{\frac{3-a}{5-a}}}.
\]
Hence the mass distributions of all star graphs with root at the central vertex and $q-1=2$ edges having the Neumann and Dirichlet eigenvalues \eqref{ex2} are given by $M=0$ and
\begin{equation}
\label{ex3}
\begin{aligned}
 &l_0^{(1)}\!=\frac2{a+1}, \  &l_1^{(1)}\!=\frac{2a}{a+1}, \ \ \ & m_1^{(1)}\!= \frac 1a \Big( \frac{a+1}2 \Big)^2, \\
 &l_0^{(2)}\!=\frac2{5-a}, \  &l_1^{(2)}\!=\frac{3-a}{5-a}, \ \ \ & m_1^{(2)}\!= \frac 1{3-a} \Big( \frac{5-a}2 \Big)^2,
\end{aligned} 
\end{equation}
where $a\in(0,3)$ is a free parameter; note that $a=2$ yields the solution calculated \vspace{-2mm} in~\eqref{ex5}.

\medskip

\begin{corollary}
All isospectral star graphs with $q=3$ edges and root at a pendant vertex sought in Example~{\rm\ref{EX1}} are given by $M=0$, ${\bf l_0}\!={\bf l_1}\!=1$, ${\bf m_1}\!=1$ on the main edge and {\rm \eqref{ex3}} on the other $2$ edges.
\end{corollary}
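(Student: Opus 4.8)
The plan is to combine the constructive solution of Example~\ref{EX1}, which already fixes the main edge uniquely, with the complete description of the non-main subgraph obtained in Example~\ref{EX2}. First I would invoke Corollary~\ref{cor:5.3}, which asserts that the two spectra of (N2) and (D2) together with the length ${\bf l}$ of the main edge \emph{uniquely} determine ${\bf n}$, the masses $\{{\bf m}_k\}_{k=1}^{\bf n}$ and the intervals $\{{\bf l}_k\}_{k=1}^{\bf n}$. In the setting of Example~\ref{EX1} the constructive solution already produced ${\bf n}=1$, ${\bf l_0}={\bf l_1}=1$, ${\bf m_1}=1$, and $a_{\bf n}^1=\tfrac13>0$, which forces $M=0$ by Theorem~\ref{thm:5.2}. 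So the main-edge data and $M$ are determined, and the only freedom lies in the subgraph of the remaining $q-1=2$ edges.

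Next I would make precise the reduction of the subgraph problem to an inverse problem with root at the centre. As explained after the proof of Example~\ref{EX1} and following the construction in the proof of Theorem~\ref{thm:5.2}, every star graph solving Example~\ref{EX1} must have its non-main subgraph realise the $S_0$-function
\[
  \widehat f_1(z) = \Big(-Mz+\sum_{j=1}^{2}\frac{R^{(j)}_{2n_j-1}(z)}{R^{(j)}_{2n_j}(z)}\Big)^{-1}
  = \frac1{\Psi_2(z)},
\]
i.e.\ $\Psi_2(z) = 1/\widehat f_1(z) = 3(1-z)/(2-z)$, which upon writing it with equal-degree numerator and denominator has the form \eqref{psiq} for a $q-1=2$ star graph with $\lambda_1^2=1$, $\zeta_1^2=\lambda_2^2=\zeta_2^2=2$. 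Conversely, by Proposition~\ref{prop:4.3} together with \eqref{vienna12a}, any mass distribution on the two non-main edges realising this $\Psi_2$ — glued to the fixed main edge ${\bf l_0}={\bf l_1}=1$, ${\bf m_1}=1$ and $M=0$ — yields a star graph with exactly the eigenvalues \eqref{ex1}. Hence the set of solutions of Example~\ref{EX1} is in bijection with the set of solutions of Example~\ref{EX2}.

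Finally I would quote the result of Example~\ref{EX2}: the decomposition \eqref{subdiv} is forced to be $n_1=n_2=1$ (any other split makes the Dirichlet spectrum on one edge non-simple), and because the double pole of $\Psi_2$ makes \eqref{3.1a} non-unique, the partial-fraction split carries exactly one free parameter $a=A_1^{(1)}\in(0,3)$, leading to the mass distribution \eqref{ex3} on the two non-main edges. Splicing this together with the uniquely determined main-edge data gives precisely the family claimed in the corollary, with $a=2$ reproducing the distribution \eqref{ex5}. The only point requiring care — and the main (though minor) obstacle — is to check that the gluing in Proposition~\ref{prop:4.3} respects multiplicities, so that every member of the family genuinely has the double Dirichlet eigenvalue $\lambda_2=\lambda_3=\sqrt2$ and no spurious cancellation occurs; this is exactly the multiplicity bookkeeping already carried out in the proof of Theorem~\ref{thm:5.2} via \eqref{vienna14}, \eqref{vienna15}, so no new work is needed.

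\begin{proof}
By Corollary~\ref{cor:5.3}, the spectra \eqref{ex1} together with the length ${\bf l}=2$ determine the main-edge data uniquely, and the constructive solution of Example~\ref{EX1} shows that necessarily ${\bf n}=1$, ${\bf l_0}={\bf l_1}=1$, ${\bf m_1}=1$, with $a_{\bf n}^1=\tfrac13>0$ and hence $M=0$. By Lemma~\ref{lem:5.1} and \eqref{vienna12a}, any star graph with the prescribed spectra satisfies
\[
 \Phi(z)=1+\frac{1}{-z+\frac{1}{1+\widehat f_1(z)}}, \qquad \widehat f_1(z)=\frac{1}{\Psi_{2}(z)},
\]
where, by \eqref{vienna14}, $\Psi_{2}$ must be the function \eqref{psiq} of a star graph with root at the centre, $q-1=2$ edges of lengths $l_1=2$, $l_2=1$, and Neumann/Dirichlet eigenvalues $\lambda_1^2=1$, $\zeta_1^2=\lambda_2^2=\zeta_2^2=2$; indeed $\Psi_2(z)=1/\widehat f_1(z)=3(1-z)/(2-z)$ is exactly the function treated in Example~\ref{EX2}. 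Conversely, by Proposition~\ref{prop:4.3} and the multiplicity bookkeeping in the proof of Theorem~\ref{thm:5.2} (see \eqref{vienna14}, \eqref{vienna15}), gluing \emph{any} realisation of this $\Psi_2$ onto the fixed main edge produces a star graph with exactly the eigenvalues \eqref{ex1}, including multiplicities. Thus the solutions sought in Example~\ref{EX1} correspond bijectively to the solutions of Example~\ref{EX2}, which by \eqref{ex3} form the one-parameter family with $a\in(0,3)$; the value $a=2$ recovers \eqref{ex5}.
\end{proof}
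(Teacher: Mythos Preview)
Your proposal is correct and follows essentially the same approach as the paper, which states the corollary without a separate proof, treating it as an immediate consequence of the constructive solutions of Examples~\ref{EX1} and~\ref{EX2} together with Corollary~\ref{cor:5.3}. You have simply made explicit the reduction (uniqueness on the main edge via Corollary~\ref{cor:5.3}, bijection with the solutions of Example~\ref{EX2} via Proposition~\ref{prop:4.3} and the multiplicity argument in Theorem~\ref{thm:5.2}) that the paper leaves implicit.
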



\begin{center}
\begin{figure}
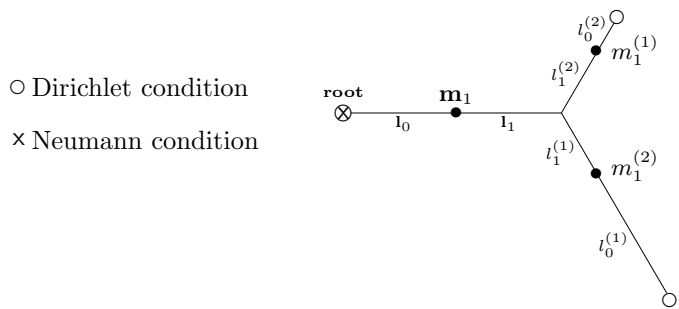

\psset{unit=7mm}\pspicture*(-11,-3.7)(4,2)
\rput[C](-10.2,0.5){\circle{0.25}}
\rput[L](-8,0.5){Dirichlet condition}
\rput[C](-10.3,-0.5){{\sf x}}
\rput[L](-7.9,-0.5){Neumann condition}
\rput[C](-4.14,0.4){{\bf \tiny root}}

\psline[linewidth=.3pt,linecolor=black]{-}(-4,0)(0,0)
\psline[linewidth=.3pt,linecolor=black]{-}(0,0)(1,1.71)
\psline[linewidth=.3pt,linecolor=black]{-}(0,0)(2,-3.42)
\rput[C](-4.14,0){{\sf x}}
\rput[C](-3.99,0){\circle{0.28}}
\rput(1.18,1.82){\circle{0.25}}
\rput(2.18,-3.55){\circle{0.25}}
\rput[C](-2,0){$\bullet$}
\rput[C](-2,0.3){{\small ${\bf m}_1$}}
\rput[C](-3,-0.2){{\tiny ${\bf l}_0$}}
\rput[C](-1,-0.2){{\tiny ${\bf l}_1$}}
\rput[C](0.66,1.17){$\bullet$}
\rput[C](1.4,1.2){{\small$m_1^{(1)}$}}
\rput[C](0,-0.75){{\tiny $l_1^{(1)}$}}
\rput[C](1,-2.5){{\tiny $l_0^{(1)}$}}
\rput[C](0.66,-1.17){$\bullet$}
\rput[C](1.4,-1){{\small$m_1^{(2)}$}}
\rput[C](0.1,0.75){{\tiny $l_1^{(2)}$}}
\rput[C](0.6,1.6){{\tiny $l_0^{(2)}$}}
\endpspicture
\vspace{-3mm}
\caption{{\small Star graph solving the inverse problem in Example \ref{EX1} ($a=2$)}}
\end{figure}
\end{center}

\vspace{5mm}

\noindent
{\bf Acknowledgements.} 
This work is supported by the Swiss National Science Foundation, SNF, within the {\it Swiss-Ukrainian SCOPES programme}, grant no.\ IZ73Z0-128135).
C.~Tretter also thanks the Institut Mittag-Leffler for the support and kind hospitality within the RIP ({\it Research in Peace}) Programme. 

\medskip

\thebibliography{22}

\bibitem{Atk} F.V.\ Atkinson, 
{\it Discrete and continuous boundary problems}. 
Mathematics in Science and Engineering, Vol.\, 8,
Academic Press, New York-London, 1964.

\bibitem{B} D.I.\ Bodnar, {\it Branching continued fractions} (in Russian). 
Naukova Dumka, Kiev, 1986.
\bibitem{BG} C.\ de Boor and G.H.\ Golub, 
{\it The numerically stable reconstruction of a Jacobi matrix from spectral data}. 
Linear Algebra Appl.\ {\bf 21} (1978), 245--260.

\bibitem{BP1} O.\ Boyko and  V.\ Pivovarchik, 
{\it The inverse three-spectral problem for a Stieltjes string and the inverse problem with one-dimensional damping}. 
Inverse Problems {\bf 24}:1 (2008), 015019,~13~pp.
\bibitem{BP2} O.\ Boyko and V.\ Pivovarchik, 
{\it Inverse  spectral problem for a star graph of Stieltjes strings}. 
Methods Funct.\ Anal.\ Topology {\bf 14}:2  (2008), 159--167.
\bibitem{BW} B.M.\ Brown and R.\ Weikard, 
{\it A Borg-Levinson theorem for trees}. 
Proc.\ R.\ Soc.\ Lond.\ Ser.\ A  Math.\ Phys.\ Eng.\ Sci.\ {\bf 461}:2062 (2005), 3231--3243.
\bibitem{C} A.\ Cauchy, 
{\it Sur l'\'equation \`a l'aide de laquelle on d\'etermine les in\'egalit\'es seculaires des mouvements des plan\`etes}; 
Oeuvres Compl\`etes, Second Ser.\ IX, (1929), 174--195.
\bibitem{Ca} W.\ Cauer, 
{\it Die Verwirklichung von Wechselstromwiderst\"anden vorgeschriebener Frequenzab\-h\"an\-gigkeit}.
Archiv f\"ur Elektrotechnik {\bf 17}:4 (1926), 355--388.
\bibitem{CEH12}
S.J.\ Cox, M.\  Embree, J.M.\  Hokanson, 
{\it One can hear the composition of a string: experiments with an inverse eigenvalue problem.}
SIAM Rev.\ 54:1  (2012),  157--178.
\bibitem{Do} W.F.\ Donoghue Jr., 
{\it Monotone matrix functions and analytic continuation}. 
Springer Verlag, New York, 1974.
\bibitem{DMcK}
H.\ Dym and H.P.\ McKean, 
{\it Gaussian processes, function theory and the inverse spectral problem}. 
Academic Press, New York, London 1976.
\bibitem{FP}
K.\ Fan and G.\ Pall, 
{\it Imbedding  conditions for Hermitian and normal matrices}. 
Canad.\ J.\ Math.\ {\bf 9} (1957), 298--304.  
\bibitem{FKM} A.M.\ Filimonov, P.F.\ Kurchanov and A.D.\ Myshkis, 
{\it Some unexpected results in the classical problem of vibrations of the string with $n$ beads when $n$ is large}. 
C.R.\ Acad.\ Sci.\ Paris S\'er.\ I Math.\ {\bf 313}:6 (1991), 961--965.
\bibitem{FM}
A.F.\ Filimonov and A.D.\ Myshkis,  
{\it On properties of large wave effect in classical problem of bead string vibration}. 
J. Difference Equ.\ Appl.\ {\bf 10}:13--15 (2004), 1171--1175.
\bibitem{GK} F.R.\ Gantmakher and M.G.\ Krein, 
{\it Oscillating matrices and kernels and vibrations of mechanical systems} (in Russian).
GITTL, Moscow-Leningrad, 1950, German transl.\ Akademie Verlag, Berlin, 1960.
\bibitem{GS1} F.\ Gesztesy and B.\ Simon,
{\it On the determination of a potential from three spectra}. In:
V. Buslaev and M. Solomyak, eds., Advances in Mathematical Sciences, 
Amer.\ Math.\ Soc.\ Transl.\ (2) {\bf 189} (1999), 85-92.
\bibitem{Gd} G.\ Gladwell,
{\it Inverse  problems in vibration}. 
Kluwer Academic Publishers, Dordrecht, 2004.
\bibitem{G1} G.\ Gladwell, 
{\it Matrix inverse eigenvalue problems}. In:
G.\ Gladwell, A.\ Morassi, eds., Dynamical Inverse Problems: Theory and Applications. 
CISM Courses and Lectures {\bf 529} (2011), 
1--29.    
\bibitem{HJ} R.A.\ Horn and C.R.\ Johnson, 
{\it Matrix Analysis}. 
Cambridge University Press, Cambridge,~1990.
\bibitem{HM} R.O.\ Hryniv and Ya.V.\ Mykytyuk, 
{\it Inverse spectral problems for Sturm--Liouville operators with singular potentials. Part III: Reconstruction by three spectra}. 
J.\ Math.\ Anal.\ Appl.\ {\bf 284}:2 (2003), 626--646.
\bibitem{JD} C.R.\ Johnson and A.\ Leal Duarte, 
{\it On the possible multiplicities of the eigenvalues of a Hermitian matrix whose graph is a tree}. 
Linear Algebra Appl. {\bf 348} (2002), 7--21.
\bibitem{KK1}
I.S.\ Kac and M.G.\ Krein, 
{\it R-Functions-Analytic Functions Mapping the Upper Half-Plane into Itself}. 
Amer.\ Math.\ Soc.\ Transl.\ (2) {\bf 103} (1974), 1--18.
\bibitem{KK2}
I.S.\ Kac and M.G.\ Krein, 
{\it On the Spectral Function of the String}. 
Amer.\ Math.\ Soc.\ Transl.\ (2) {\bf 103} (1974), 19--102.
\bibitem{KP} I.S.\ Kac and V.\ Pivovarchik, 
{\it On multiplicity of a quantum graph spectrum}.
J.\ Phys.\ A {\bf 44}:10 (2011), 105301,~14~pp.
\bibitem{KMF}
P.F.\ Kurchanov, A.D.\ Myshkis, and A.M.\ Filimonov,  
{\it Train vibrations and Kronecker's theorem}. 
{Prikladnaya mate\-ma\-tika i mekhanika} {\bf 55}:6 (1991), 989--995 (in Russian).
\bibitem{LP} C.-K.\ Law and V.\ Pivovarchik, 
{\it Characteristic functions of quantum graphs}. 
J.\ Phys.\ A {\bf 42}:3 (2009), 035302,~12~pp.
\bibitem{D} A.\ Leal Duarte, 
{\it Construction of acyclic matrices from spectral data}. 
Linear Algebra Appl.\ {\bf 113} (1989), 173--182. 
\bibitem{LG}
B.M.\ Levitan and M.G.\ Gasymov,  
{\it Determination of differential equation by two spectra} (in Russian). 
Uspechi Math.\ Nauk \textbf{19}:2/116 (1964), 3--63.
\bibitem{Ma1} V.A.\ Marchenko, 
{\it Introduction to the theory of inverse problems of spectral analysis} (in Russian). 
Acta, Kharkov, 2005.
\bibitem{NU}  
P.\ Nylen, F.\ Uhlig, 
{\it Realization of interlacing by tree patterned matrices}. 
Linear Multilinear Algebra {\bf 38} (1994), 13--37.
\bibitem{P1}
V.\ Pivovarchik, 
{\it An inverse Sturm-Liouville problem by three spectra}.
Integral Equations Operator Theory {\bf 34}:2 (1999), 234--243.
\bibitem{P2} 
V.\ Pivovarchik, 
{\it Inverse problem for the Sturm-Liouville equation on a simple graph}.
SIAM J.\ Math.\ Anal.\ {\bf 32} (2000), 801--819.
\bibitem{P3} 
V.\ Pivovarchik, 
{\it Inverse problem for the Sturm-Liouville equation on a star-shaped  graph}.
Math. Nachr.\ {\bf 280}:13-14 (2007), 1595-1619.
\bibitem{P0} 
V.\ Pivovarchik, 
{\it Existence of a tree of Stieltjes strings corresponding to two given spectra}. 
J. Phys.\ A {\bf 42}:37 (2009), 375213,~16~pp.
\bibitem{PW} 
V.\ Pivovarchik and H.\ Woracek,
{\it Sums of Nevanlinna functions and differential equations on star-shaped graphs}.
Oper.\ Matrices {\bf 3}:4 (2009), 451--501.
\bibitem{S} 
V.Ya.\ Skorobogatko, 
{\it Theory of branching continued fractions and its applications in computing mathematics} (in Russian). 
Nauka, Moscow, 1983. 
\bibitem{St}
T.J.\ Stieltjes, 
{\it Sur la r\'eduction en fraction continue d'une s\'erie proc\'edant suivant les puissances d\'escendantes d'une variable}.
Ann.\ Fac.\ Sc.\ Toulouse 3 (1889), 1--17.
\bibitem{Y} 
V.\ Yurko, 
{\it Inverse spectral problems for Sturm-Liouville operators on graphs}. 
Inverse Problems {\bf 21} (2005), 1075--1086.

\bibitem{ZINS} 
G.V.\ Zeveke, P.A.\ Ionkin, A.V.\ Netushil, and S.V.\ Strakhov, 
{\it Foundation of Theory of Circuits} (in Russian).
Energia, Moscow, 1975. 

\end{document}